\newtheorem{proposition}{Proposition}
\newtheorem{lemma}{Lemma}
\def\ind{\begin{picture}(9,8)
         \put(0,0){\line(1,0){9}}
         \put(3,0){\line(0,1){8}}
         \put(6,0){\line(0,1){8}}
         \end{picture}
        }
\def\RD{\textsc{RD}}
\def\RR{\textsc{RR}}
\def\OR{\textsc{OR}}
\def\CRR{\textsc{CRR}}
\def\GRR{\textsc{RR}}
\def\GOR{\textsc{OR}}
\def\SRR{\textsc{RR}^\true}
\def\CRD{\textsc{CRD}}
\def\SRD{\textsc{RD}^\true}
\def\GRD{\textsc{RD}}
\def\pr{\text{P}}
\def\true{\text{true}}    
\def\V{\mathbf{V}}
\def\BF{\textsc{BF}}
\def\HR{\textsc{HR}}
\def\SHR{\textsc{HR}^\true}
\def\CHR{\textsc{CHR}}
\def\E{\mathbb{E}}
\def\MR{\textsc{MR}}
\def\SMR{\textsc{MR}^{\true}}
\def\CMR{\textsc{CMR}}
\def\ACE{\textsc{ACE}^\true}
\def\obs{\text{obs}}
\newcommand*{\rom}[1]{\expandafter\@slowromancap\romannumeral #1@}
\begin{document}

\setlength{\baselineskip}{2\baselineskip}

\title{\bf Sensitivity Analysis Without Assumptions}
\author{Peng Ding and Tyler VanderWeele\\
Harvard University\\
Emails: \url{pengdingpku@gmail.com} and \url{tvanderw@hsph.harvard.edu}
}
\date{}

\maketitle

\begin{center}
\bfseries Abstract
\end{center}
Unmeasured confounding may undermine the validity of causal inference with observational studies. Sensitivity analysis provides an attractive way to partially circumvent this issue by assessing the potential influence of unmeasured confounding on the causal conclusions. However, previous sensitivity analysis approaches often make strong and untestable assumptions such as having a confounder that is binary, or having no interaction between the effects of the exposure and the confounder on the outcome, or having only one confounder. Without imposing any assumptions on the confounder or confounders, we derive a bounding factor and a sharp inequality such that the sensitivity analysis parameters must satisfy the inequality if an unmeasured confounder is to explain away the observed effect estimate or reduce it to a particular level. Our approach is easy to implement and involves only two sensitivity parameters. Surprisingly, our bounding factor, which makes no simplifying assumptions, is no more conservative than a number of previous sensitivity analysis techniques that do make assumptions. Our new bounding factor implies not only the traditional Cornfield conditions that both the relative risk of the exposure on the confounder and that of the confounder on the outcome must satisfy, but also a high threshold that the maximum of these relative risks must satisfy. Furthermore, this new bounding factor can be viewed as a measure of the strength of confounding between the exposure and the outcome induced by a confounder. 

\noindent {\bfseries Key Words:} Bounding factor; Causality; Confounding; Cornfield condition; Observational study.

\section{Introduction}
Causal inference with observational studies is of great interest and importance in many scientific disciplines. Although unmeasured confounding between the exposure and the outcome may bias the estimation of the true causal effect, an approach often called ``sensitivity analysis'' or ``bias analysis'' over a range of sensitivity parameters sometimes allows researchers to make causal inferences even without full control of the confounders of the relationship between the exposure and outcome.

Sensitivity analysis plays a central role in assessing the influence of the unmeasured confounding on the causal conclusions. However, many sensitivity analysis techniques often require additional untestable assumptions. For instance, some authors assume a single binary confounder \cite{Cornfield::1959, Bross::1966, Bross::1967, Yanagawa::1984, Rosenbaum::1983JRSSB, Imbens::2003}. Researchers also often assume a homogeneity assumption that there is no interaction between the effects of the exposure and the confounder on the outcome \cite{Schlesselman::1978, Vanderweele::2011, Rosenbaum::1983JRSSB, Lin::1998, Imbens::2003}. Some sensitivity analysis techniques only allow one to assess how strong an unmeasured confounder would have to be to completely explain away an effect \cite{Cornfield::1959, Bross::1966, Bross::1967, Lee::2011, Ding::2014}, but do not allow one to assess what the effect estimate might be under weaker unmeasured confounding scenarios, i.e., do not allow one to do sensitivity analysis under alternative hypotheses. Performing sensitivity analysis under alternative hypotheses can be quite challenging due to more parameters needed in the sensitivity analysis. Cornfield et al.'s early work \citep{Cornfield::1959} on sensitivity analysis for the cigarette smoking and lung cancer association, which helped initiate the entire field of sensitivity analysis, in fact made all three simplifying assumptions: a single binary confounder, no interaction, and only sensitivity analysis for the null hypothesis of no causal effect.
Although some sensitivity analysis results exist for general confounders \citep{Vanderweele::2011, Flanders::1990}, they are only easy to implement under some of the above simplifying assumptions.

In this paper, we propose a new bounding factor and sensitivity analysis technique without any assumptions about the unmeasured confounder or confounders. None of the null hypothesis, a single binary confounder, or the no-interaction assumption is required for using the bounding factor. Nonetheless, our new bounding factor, which makes no simplifying assumptions, is no more conservative than many previous sensitivity analysis techniques that do make assumptions and is furthermore easy to implement. Moreover, we show that the new bounding factor implies not only the classical Cornfield conditions \cite{Cornfield::1959} that both the relative risk of the exposure on the confounder and that of the confounder on the outcome must satisfy, but also a stronger condition that the maximum of these relative risks must satisfy. The new bounding factor can be viewed as a measure of the strength of confounding between the exposure and the outcome resulting from the confounder.
We begin by considering outcomes which are binary and extend our results further to time-to-event and non-negative count or continuous outcomes. We consider both ratio and difference scales.

The claim that our technique is ``without assumptions'' requires some clarification. As we will see below, we will, without any assumptions, be able to make statements of the form: ``For an observed association to be due solely to unmeasured confounding, two sensitivity analysis parameters must satisfy [a specific inequality].'' We will also, without assumptions, be able to make statements of the form: ``For unmeasured confounding alone to be able to reduce an observed association [to a given level], two sensitivity analysis parameters must satisfy [another specific inequality].'' We believe the ability to make statements of this form without imposing any specific structure on the nature of the unmeasured confounder or confounders constitutes a major advance in the literature.

However, if statements are made of the form, ``If the sensitivity analysis parameter take [specified values], then such unmeasured confounding can reduce the observed estimate by no more than [a specific level],'' then the specification of the sensitivity analysis parameters could itself of course be viewed as an assumption. Moreover, when placing the results within a counterfactual or potential outcomes framework, the assumptions implicit within that framework of course would be needed also to give a potential outcomes interpretation to the sensitivity analysis. Thus certain types of statements concerning the sensitivity of conclusions to unmeasured confounding can be made ``without assumptions,'' while other types of statements do require assumptions concerning the specification of the sensitivity analysis parameters themselves, or those implicit within the potential outcomes framework.

Our title perhaps merits one further qualification which is that what is called in this paper ``sensitivity analysis'' is generally now referred to as ``bias analysis'' in the epidemiologic literature. Moreover, such ``bias analysis'' is relevant not only to problems of unmeasured confounding but also measurement error and selection bias, and our focus in this paper only concerns unmeasured confounding. The term ``sensitivity analysis'' is,  however, still employed in statistics, econometrics, and in many of the social sciences for issues of unmeasured confounding. We believe the technique presented in this paper will be useful across this range of disciplines and have chosen to use the broader term, while acknowledging that terminology in epidemiology has shifted.

\section{Main Result: A New Bounding Factor}
Let $E$ denote the exposure, $D$ denote a binary outcome, $C$ denote the measured confounders, and $U$ denote one or more unmeasured confounders. We will assume for what follows that the exposure $E$ is binary, but all of the results below are also applicable to a categorical or continuous exposure and could be applied comparing any two levels of $E$.  For ease of notation, we assume that the unmeasured confounder $U$ is categorical with levels $0,1,\ldots, K-1$. But all the conclusions hold for $U$ of general type (categorical, continuous, or mixed; single or multiple confounders). We provide proofs and theoretical technical details for general $U$ in the eAppendix.

Let $\RR_{ED|c}^\obs =  \pr(D=1\mid E=1, C=c) /   \pr(D=1\mid E=0,C=c)$ denote the observed relative risk of the exposure $E$ on the outcome $D$ within stratum of measured confounders $C=c$. Define $\GRR_{EU,k|c}  =\pr(U=k\mid E=1, C=c) / \pr(U=k\mid E=0, C=c)$ as the relative risk of exposure on category $k$ of the unmeasured confounder within stratum of measured confounders $C=c$. We use $\GRR_{EU|c} = \max_k \GRR_{EU, k|c}  $ to denote the maximum of these relative risks between $E$ and $U$, which we will call the maximal relative risk of $E$ on $U$ within stratum $C=c$. Define 
$$
\GRR_{UD|E=0,c}  ={  \max_k \pr(D=1\mid E=0, C=c, U=k) \over  \min_k \pr(D=1\mid E=0, C=c, U=k) }
$$ 
as the maximum of the effect of $U$ on $D$ among the unexposed comparing any two categories of $U$ (i.e., the ratio of the maximum and minimum of the probabilities of the outcome over strata of $U$ without exposure and within stratum $C=c$); similarly, define 
$$ 
\GRR_{UD|E=1,c} = { \max_k \pr(D=1\mid E=1, C=c, U=k) \over  \min_k \pr(D=1\mid E=1, C=c, U=k) }
$$ 
as the maximum of the effect of $U$ on $D$ among the exposed comparing any two categories of $U$ (i.e., the ratio of the maximum and minimum of the probabilities of the outcome over strata of $U$ with exposure and within stratum $C=c$). We use $\GRR_{UD|c} = \max(  \GRR_{UD|E=1,c}, \GRR_{UD|E=0,c} )$ to  denote the maximum of the relative risks between $U$ and $D$ with and without exposure, defined as the maximal relative risk of $U$ on $D$ within stratum $C=c$. Note that if $U$ is a vector that contains multiple unmeasured confounders, then $\GRR_{EU|c}$ and $\GRR_{UD|c}$ are defined as the maximum relative risk comparing any two categories of the vector $U$.

If $C$ and $U$ suffice to control for confounding for the effect of $E$ on $D$, the standardized relative risk
\begin{eqnarray*}
\SRR_{ED|c} = { \sum_{k=0}^{K-1} \pr(D=1\mid E=1, C=c, U=k)  \pr(U=k\mid C=c)   \over    
\sum_{k=0}^{K-1} \pr(D=1\mid E=0, C=c, U=k)  \pr(U=k\mid C=c) }
\end{eqnarray*}
is the true causal relative risk of the exposure $E$ on the outcome $D$ within stratum $C=c.$ In the main text, we focus the discussion on the whole population. We further show in the eAppendix that all the conclusions also hold for exposed and unexposed subpopulations. 

We will for the next several sections assume all analyses are carried out within strata of $C$, and thus the condition $C=c$ is omitted and kept implicitly in all the conditional probabilities, e.g., $\RR_{ED|c}^\obs $ is replaced by $\RR_{ED}^\obs $ for notational simplicity. Later in the paper we will comment on how the results are applicable to estimation averaged over $C$, rather than conditional on $C.$

The relative risk pair $(\GRR_{EU}, \GRR_{UD})$ measures the strength of confounding between the exposure $ E$ and the outcome $D$ induced by the confounder $U$. Our main result ties the ratio of the observed relative risk $\RR_{ED}^\obs $ adjusted only for measured confounders $C$ and the true relative risk $\SRR_{ED}$ adjusted also for unmeasured confounders $U$, to the strength of confounding, $(\GRR_{EU}, \GRR_{UD})$. Without any assumptions, we have the following result:

\noindent {\bfseries Result 1.} 
\begin{eqnarray}
\label{eq::main-result}
\SRR_{ED} \geq \RR_{ED}^\obs  \Big/  \frac{  \GRR_{EU} \times \GRR_{UD}   }{    \GRR_{EU} +  \GRR_{UD}  - 1}  .
\end{eqnarray}
Result 1 shows that even in the presence of unmeasured confounding the true relative risk must be at least as large as $\RR_{ED}^\obs  \Big/  \frac{  \GRR_{EU} \times \GRR_{UD}   }{    \GRR_{EU} +  \GRR_{UD}  - 1}.$ In the eAppendix, we provide a proof for result (\ref{eq::main-result}) and also show that the inequality is sharp in the sense that we can always construct a model with a confounder $U$ to attain the equality. The quantity $ (\GRR_{EU} \times \GRR_{UD} )  / (  \GRR_{EU} +  \GRR_{UD}  - 1)  $ is a new joint bounding factor for the relative risk. Although quite simple, this bound using both $\GRR_{EU}$ and $\GRR_{UD}$ has several important implications.

First, the result essentially allows for sensitivity analysis without assumptions insofar as for an unmeasured confounder to reduce an observed estimated $\RR_{ED}^\obs$ to an actual relative risk of $\SRR_{ED}$ the sensitivity analysis parameters $\GRR_{EU}$ and $\GRR_{UD}$ must be sufficiently large to satisfy the inequality 
$$
\frac{  \RR_{EU} \times  \RR_{UD} } { \RR_{EU} + \RR_{UD} -1} \geq  \frac{ \RR_{ED}^\obs }{  \SRR_{ED} }.
$$  
This statement holds without any assumptions about the nature of the unmeasured confounder. One could plot those values of $\RR_{EU}$ and $\RR_{UD}$ that would be required to explain away the effect estimate (or the lower limit of a confidence interval). To conduct sensitivity analysis with pre-specified strength of the unmeasured confounder, $(\GRR_{EU}, \GRR_{UD})$, we can divide the observed relative risk and its confidence limits by $ (\GRR_{EU} \times \GRR_{UD} )  / (  \GRR_{EU} +  \GRR_{UD}  - 1)  $, in order to obtain a point estimate and confidence limits of the lower bound of the true causal effect of the exposure $E$ on the outcome $D$. We will refer to the relative risk adjusted only for $C$, when divided by the bounding factor $ (\GRR_{EU} \times \GRR_{UD} )  / (  \GRR_{EU} +  \GRR_{UD}  - 1)  $ as the corrected relative risk. It is ``corrected'' in the sense that an unmeasured confounder cannot reduce the relative risk any further than what is obtained by division by its bounding factor. As an example, suppose we have an observed relative risk of $2.1$ with a $95\%$ confidence interval $[1.4, 3.1]$. If we consider an unmeasured confounder with $(\RR_{EU}, \RR_{UD})=(2,2)$, then the joint bounding factor is $2\times2/(2+2-1)=1.33$, and the corrected relative risk is $2.1/1.33=1.58$ with a $95\%$ confidence interval $[1.4/1.33,3.1/1.33] = [1.05, 2.33]$. Therefore, the confounder with $(\RR_{EU}, \RR_{UD})=(2,2)$ cannot explain away the observed relative risk $2.1$ or its lower confidence limit $1.4$, i.e., it cannot reduce the point estimate and lower confidence limit of the relative risk to be smaller than one.
If we consider an unmeasured confounder with $(\RR_{EU}, \RR_{UD})=(2.5, 3.5)$, then the joint bounding factor is $2.5\times 3.5/(2.5+3.5-1)=1.75$, and an estimate for the lower bound of the true causal relative risk is $2.1/1.75=1.20$ with a $95\%$ confidence interval $[1.4/1.75, 3.1/1.75] = [0.8, 1.77]$. Although the confounder with $(\RR_{EU}, \RR_{UD})=(2.5,3.5)$ cannot explain away the observed relative risk of $2.1$, it reduces the original lower confidence limit $1.4$ to $0.8$ (i.e., less than one).
Note that we are not merely assessing a binary confounder, and we are not imposing the no interaction assumption. Moreover, we are not restricted to only assessing how much confounding can explain away an effect, nor are we even assuming that there is a single unmeasured confounder (since $U$ can be a vector of unmeasured confounders). The corrected estimates and confidence intervals above are applicable irrespective of the underlying confounder (or confounders). We can apply the technique to obtain a range of values for the true causal effect under different specifications of $\GRR_{EU}$ and $\GRR_{UD}$.

Table \ref{tb::biasfactor} shows the magnitudes of the joint bounding factor for different combinations of $\GRR_{EU}$ and $\GRR_{UD}$. The entries in the table for the joint bounding factor are the largest observed relative risks that such an unmeasured confounder could explain away. We can see from the table that the joint bounding factor is always smaller than both of $\GRR_{EU}$ and $\GRR_{UD}$, and much smaller than the maximum of them.

\begin{table}[ht]
\centering
\caption{Magnitudes of the joint bounding factor for different combinations of $\GRR_{EU}$ and $\GRR_{UD}$}\label{tb::biasfactor}
\resizebox{\columnwidth}{!}{
\begin{tabular}{|rr|rrrrrrrrrrrr|}
  \hline
 & & \multicolumn{12}{c|}{$\GRR_{UD}$}\\
\multicolumn{2}{|c|}{bounding factor}  & 1.3 & 1.5 & 1.8 & 2 & 2.5 & 3 & 3.5 & 4 & 5 & 6 & 8 & 10 \\ 
  \hline 
\multirow{12}{*}{$\GRR_{EU}$} & 1.3 & 1.06 & 1.08 & 1.11 & 1.13 & 1.16 & 1.18 & 1.20 & 1.21 & 1.23 & 1.24 & 1.25 & 1.26 \\ 
&  1.5 & 1.08 & 1.12 & 1.17 & 1.20 & 1.25 & 1.29 & 1.31 & 1.33 & 1.36 & 1.38 & 1.41 & 1.43 \\ 
&  1.8 & 1.11 & 1.17 & 1.25 & 1.29 & 1.36 & 1.42 & 1.47 & 1.50 & 1.55 & 1.59 & 1.64 & 1.67 \\ 
&  2 & 1.13 & 1.20 & 1.29 & 1.33 & 1.43 & 1.50 & 1.56 & 1.60 & 1.67 & 1.71 & 1.78 & 1.82 \\ 
&  2.5 & 1.16 & 1.25 & 1.36 & 1.43 & 1.56 & 1.67 & 1.75 & 1.82 & 1.92 & 2.00 & 2.11 & 2.17 \\ 
&  3 & 1.18 & 1.29 & 1.42 & 1.50 & 1.67 & 1.80 & 1.91 & 2.00 & 2.14 & 2.25 & 2.40 & 2.50 \\ 
&  3.5 & 1.20 & 1.31 & 1.47 & 1.56 & 1.75 & 1.91 & 2.04 & 2.15 & 2.33 & 2.47 & 2.67 & 2.80 \\ 
&  4 & 1.21 & 1.33 & 1.50 & 1.60 & 1.82 & 2.00 & 2.15 & 2.29 & 2.50 & 2.67 & 2.91 & 3.08 \\ 
&  5 & 1.23 & 1.36 & 1.55 & 1.67 & 1.92 & 2.14 & 2.33 & 2.50 & 2.78 & 3.00 & 3.33 & 3.57 \\ 
&  6 & 1.24 & 1.38 & 1.59 & 1.71 & 2.00 & 2.25 & 2.47 & 2.67 & 3.00 & 3.27 & 3.69 & 4.00 \\ 
&  8 & 1.25 & 1.41 & 1.64 & 1.78 & 2.11 & 2.40 & 2.67 & 2.91 & 3.33 & 3.69 & 4.27 & 4.71 \\ 
&  10 & 1.26 & 1.43 & 1.67 & 1.82 & 2.17 & 2.50 & 2.80 & 3.08 & 3.57 & 4.00 & 4.71 & 5.26 \\ 
   \hline
\end{tabular}
}
\end{table}

As a second important consequence of our main result in (\ref{eq::main-result}), we also show in the eAppendix that once we specify one of the unmeasured confounding measures, for example $\GRR_{EU}$, then to be able to reduce an observed relative risk of $\RR_{ED}^\obs $ to a true causal relative risk of $\SRR_{ED}$ the other confounding measure $\RR_{UD}$ must be at least of the magnitude
$$
 \GRR_{UD} \geq   \frac{\GRR_{EU}\times  \RR_{ED}^\obs   -  \RR_{ED}^\obs   }{ \GRR_{EU} \times \SRR_{ED} - \RR_{ED}^\obs } .
$$
For an unmeasured confounder to completely explain away the relative risk, i.e., reduce $\RR_{ED}^\obs $ to $\SRR_{ED}=1$, once we specify $\GRR_{EU}$ the other unmeasured confounding measure much be at least of the magnitude
$$
\GRR_{UD} \geq  \frac{     \GRR_{EU} \times \RR_{ED}^\obs   -  \RR_{ED}^\obs     }{    \GRR_{EU}   - \RR_{ED}^\obs    }.
$$
For example, if we have an observed relative risk $\RR_{ED}^\obs  = 2.5$, and we specify the exposure-confounder association $\GRR_{EU} = 3$. Then in order to reduce the observed relative risk to a true causal relative risk $\SRR_{ED}=1.5$, the confounder-outcome association must be at least as large as $ (3\times 2.5 - 2.5)/(3\times 1.5 - 2.5) = 2.5$; in order to completely explain away the observed relative risk (i.e., to reduce the observed relative risk to $\SRR_{ED} = 1$), the confounder-outcome association must be at least as large as $ (3\times 2.5 - 2.5)/(3 - 2.5) =10.$
The symmetry of result (\ref{eq::main-result}) implies that a similar result also holds for $\GRR_{EU}$ with pre-specified $ \GRR_{UD} $.

Third, we show in the eAppendix that if both the generalized relative risks $\GRR_{EU}$ and $ \GRR_{UD}$ have the same magnitude, for an unmeasured confounder to reduce an observed relative risk of $\RR_{ED}^\obs $ to a true causal relative risk of $\SRR_{ED}$ both of the confounding relative risks must thus be at least as large as
$$
\GRR_{EU} =  \GRR_{UD} \geq \left\{   \RR_{ED}^\obs  + \sqrt{   \RR_{ED}^\obs  (\RR_{ED}^\obs  - \SRR_{ED})     }  \right\} \Big/ \SRR_{ED}.
$$
For an unmeasured confounder to completely explain away an observed relative risk of $\RR_{ED}^\obs $ (i.e., to reduce $\RR_{ED}^\obs $ to a true causal relative risk of $\SRR_{ED}=1$), both $\GRR_{EU}$ and $ \GRR_{UD}$ must be at least as large as
$$
\GRR_{EU} =  \GRR_{UD} \geq    \RR_{ED}^\obs  + \sqrt{   \RR_{ED}^\obs  (\RR_{ED}^\obs  -1)     }  .
$$
If one of the confounding relative risks is smaller than the lower bound above, we then know that the other one must be larger. Thus even if $\GRR_{EU}$ and $\GRR_{UD}$ are not of the same magnitude, the maximum of $\GRR_{EU}$ and $\GRR_{UD}$ must satisfy the inequality above. We then have the following ``high threshold'' condition:
$$
\max ( \GRR_{EU}, \GRR_{UD} ) \geq \left\{   \RR_{ED}^\obs  + \sqrt{   \RR_{ED}^\obs  (\RR_{ED}^\obs  - \SRR_{ED})     }  \right\} \Big/ \SRR_{ED}. 
$$
For example, in order to reduce an observed relative risk of $\RR_{ED}^\obs  = 2.5$ to a true causal relative risk of $\SRR_{ED}=1.5$, the high threshold is $(2.5 + \sqrt{2.5\times 1})/1.5 = 2.72$; at least one of $\RR_{EU}$ and $\RR_{UD}$ must be of magnitude $2.72$ or above. In order to completely explain away an observed relative risk of $\RR_{ED}^\obs  = 2.5$, the high threshold is $2.5+\sqrt{2.5\times 1.5} = 4.44$; at least 
one of $\RR_{EU}$ and $\RR_{UD}$ must be of magnitude $4.44$ or higher to completely explain away the effect.

Fourth, the bias formula in (\ref{eq::main-result}) is relevant for an apparently causative exposure, which allows researchers to get lower bounds of the true causal relative risk given pre-specified sensitivity parameters $\GRR_{EU}$ and $\GRR_{UD}$. If the exposure $E$ is apparently preventive with $\RR_{ED}^\obs  < 1$, we can use the following formula to conduct sensitivity analysis:
\begin{eqnarray}
\SRR_{ED}  \leq   \RR_{ED}^\obs  \times  \frac{ \GRR_{EU}\times \GRR_{UD} }{  \GRR_{EU} +  \GRR_{UD}  - 1 },
\label{eq::main-preventive}
\end{eqnarray} 
where we modify the definition of $\GRR_{EU}$ as $\max_k \GRR_{EU, k}^{-1}$, i.e., the maximum of the inverse relative risks relating $E$ and $U$, or equivalently the inverse of the minimum of the relative risks relating $E$ and $U.$
For an apparently preventive exposure, (\ref{eq::main-preventive}) allows researchers to obtain an upper bound of the causal relative risk $\SRR_{ED}$ by multiplying the observed relative risk $ \RR_{ED}^\obs  $ by the joint bounding factor $\GRR_{EU}\times \GRR_{UD} /( \GRR_{EU} +  \GRR_{UD}  - 1)$. We present the proof in the eAppendix, and omit analogous discussion based on (\ref{eq::main-preventive}).

Finally, all the results above are within strata of the observed covariates $C$ as would be obtained from a log-binomial regression model or a logistic regression model with rare outcome. If averaged relative risk over the observed covariates $C$ is of interest, the true causal relative risk must be at least as large as the minimum of $\RR_{ED|c}^\obs \Big/  \frac{  \GRR_{EU|c} \times \GRR_{UD|c}   }{    \GRR_{EU|c}+  \GRR_{UD|c}  - 1}$ over $c$. If we assume a common causal relative risk among the levels of $C$ as in the usual log-linear or logistic regression with rare outcomes, then the true causal relative risk must be at least as large as the maximum of $\RR_{ED|c}^\obs
\Big/  \frac{  \GRR_{EU|c} \times \GRR_{UD|c}   }{    \GRR_{EU|c}+  \GRR_{UD|c}  - 1}$ over $c$.
See the eAppendix for further discussion.

\section{Relation with Cornfield Conditions}

Under the assumptions of a binary confounder $U$ and the conditional independence between the exposure $E$ and the outcome $D$ given the confounder $U$, Cornfield et al. \cite{Cornfield::1959} showed that the exposure-confounder relative risk must be at least as large as the observed exposure-outcome relative risk:
\begin{eqnarray}
\RR_{EU} \geq \RR_{ED}^\obs .
\label{eq::corn-1}
\end{eqnarray}
Schlesselman \cite{Schlesselman::1978} further showed that the confounder-outcome relative risk must also be at least as large as the observed exposure-outcome relative risk:
\begin{eqnarray}
\RR_{UD} \geq \RR_{ED}^\obs .
\label{eq::corn-2}
\end{eqnarray}
We show in the eAppendix that the classical Cornfield conditions (\ref{eq::corn-1}) and (\ref{eq::corn-2}) are just special cases of our result by letting one of $\RR_{EU}$ or $\RR_{UD}$ go to infinity in (\ref{eq::main-result}). Moreover, our results apply to general confounders not just binary confounders, and our results also apply to other possible values of the true causal relative risk of the exposure on the outcome. We are not restricted to only assessing how strong the unmeasured confounder would have to be to completely explain away the effect. Thus, for example, for confounding to reduce the observed relative risk $\RR_{ED}^\obs $ to a true causal relative risk of $\SRR_{ED}$, the unmeasured confounding measures have to satisfy 
\begin{eqnarray}
\GRR_{EU} \geq \RR_{ED}^\obs  / \SRR_{ED} \quad \text{ and } \quad \GRR_{UD} \geq \RR_{ED}^\obs  / \SRR_{ED}.
\label{eq::corn-12}
\end{eqnarray}
Perhaps even more importantly with regard to Cornfield-like conditions, our main result in (\ref{eq::main-result}) not only leads to the conditions in (\ref{eq::corn-12}) that both $\GRR_{EU}$ and $\GRR_{UD}$ must satisfy, but also implies the following condition that the maximum of $\GRR_{EU}$ and $\GRR_{UD}$ must satisfy:
\begin{eqnarray}
\max(\GRR_{EU}, \GRR_{UD}) \geq \left\{   \RR_{ED}^\obs  + \sqrt{   \RR_{ED}^\obs  (\RR_{ED}^\obs  - \SRR_{ED})     }  \right\} \Big/ \SRR_{ED},
\label{eq::corn-high}
\end{eqnarray}
to reduce an observed relative risk $\RR_{ED}^\obs $ to a true causal relative risk $\SRR_{ED}.$ We show this in the eAppendix. As a special case, for the unmeasured confounder to completely explain away the observed relative risk (i.e., $\SRR_{ED} = 1$), it is necessary that 
$$
\max(\RR_{EU}, \RR_{UD} ) \geq \RR_{ED}^\obs  + \sqrt{  \RR_{ED}^\obs  (\RR_{ED}^\obs  - 1) }.
$$ 
Once again the results do not require a binary unmeasured confounder. They are applicable to any unmeasured confounder. Similar low and high threshold Cornfield conditions that the minimum and maximum of the confounding measures must satisfy to completely explain away an effect were derived on an odds ratio scale of exposure-confounder association by Flanders and Khoury \citep{Flanders::1990} and Lee \citep{Lee::2011}, and we comment and extend these results in the eAppendix.

The classical Cornfield conditions and the high threshold generalization are useful to answer the question about the magnitude of the association between the exposure and the confounder and that between the confounder and the outcome, in order to explain away the observed exposure-outcome association or with our new results, to reduce it to a pre-specified magnitude. The Cornfield conditions in (\ref{eq::corn-12}) and (\ref{eq::corn-high}) are especially useful, when we want to specify only one of the marginal associations $\GRR_{EU}$ or $\GRR_{UD}$ as well as their relative magnitudes. However, they are inferior to the main result in (\ref{eq::main-result}), which is essentially the condition that the joint values of $(\GRR_{EU}, \GRR_{UD} )$ must satisfy. As will be seen below, although the high threshold conclusions are a useful heuristic, they are weaker than the use of our new joint bounding factor in (\ref{eq::main-result}) insofar as there are scenarios which the joint bounding factor in (\ref{eq::main-result}) can rule out an estimate as being due to unmeasured confounding but the high threshold conditions cannot.
For example, when we have an observed exposure-outcome relative risk of $\RR_{ED}^\obs  = 3$, the low threshold (i.e., the classical Cornfield condition) is given by
$$
\min(\GRR_{EU}, \GRR_{UD} ) \geq 3,
$$
the high threshold is given by
$$
\max(\GRR_{EU}, \GRR_{UD} ) \geq 3+\sqrt{3\times 2} =  5.45,
$$
and the joint threshold condition is given by
$$
\frac{  \GRR_{EU} \times \GRR_{UD}   }{    \GRR_{EU} +  \GRR_{UD}  - 1} \geq 3.
$$
Thus, the low Cornfield threshold is $3$, and so we know that we must have that both $\GRR_{EU}$ and $\GRR_{UD}$ be greater than $3$ to explain away the effect. The high Cornfield threshold is $5.45$, and so at least one of $\GRR_{EU}$ and $\GRR_{UD}$ must be larger than $5.45$ to explain away the effect. Consider an unmeasured confounder with $(\RR_{EU}=5.5, \RR_{UD}=3.1)$, they would exceed both the low Cornfield threshold (since $\RR_{EU}>3, \RR_{UD}>3$) and the high threshold (since $\RR_{EU}>5.45$), and we might thus think it can explain away the observed exposure-outcome relative risk. However, using our joint threshold condition in (\ref{eq::main-result}), an unmeasured confounder with $(\RR_{EU}=5.5, \RR_{UD}=3.1)$ has a bounding factor $5.5\times 3.1/(5.5+3.1-1) = 2.24<3$ and thus such confounding could not explain away an observed relative risk of $3$. We can see this from our result in (\ref{eq::main-result}), but we cannot see this from the classical Cornfield conditions and even the new high threshold Cornfield condition. The Cornfield conditions, both low and high thresholds, although a useful heuristic, are not as useful for sensitivity analysis as our bounding factor in (\ref{eq::main-result}) insofar as there as scenarios, such as the one above, which our bounding factor in (\ref{eq::main-result}) can rule out an estimate as being due to unmeasured confounding but the low and high threshold Cornfield conditions cannot.

\section{Illustration}

Consider the historical study conducted by Hammond and Horn \citep{Hammond::1968}, in which the point estimate of the observed relative risk of cigarette smoking on lung cancer was $\RR_{ED}^\obs =10.73$ with $95\%$ confidence interval $[8.02,14.36]$. Fisher \cite{Fisher::1957} suggested that the observed relative risk of the exposure $E$ on the outcome $D$ might be completely due to the existence of a common genetic confounder. The work of Cornfield et al. \citep{Cornfield::1959} showed that for a binary unmeasured confounder to completely explain away the observed relative risk, both the exposure-confounder relative risk and the confounder-outcome relative risk would have to be at least $10.73$. Let us now assume then that both the exposure-confounder relative risk and the confounder-outcome relative risk have the magnitude $10.73$. The joint bounding factor is 
$$
\frac{  \GRR_{EU} \times \GRR_{UD}   }{    \GRR_{EU} +  \GRR_{UD}  - 1} =  \frac{10.73\times 10.73}{10.73+10.73-1}  = 5.63.
$$
Even if we assume such a strong confounder, the point estimate of the causal relative risk of cigarette smoking and lung cancer must still be at least as large as 
$
\SRR_{ED} \geq \RR_{ED}^\obs  / 5.63
=   10.73 / 5.63 = 1.91 > 1,
$
and the $95\%$ confidence interval is $[8.02/5.63,14.36/5.63] =   [ 1.42, 2.55 ]$ with the lower confidence limit still larger than one. Thus in fact, not even exposure-confounder and confounder-outcome relative risks of $10.73$ suffice to explain away the effect nor the lower confidence limit.
In fact, in order to explain away the point estimate of the observed relative risk $10.73$, the magnitude of $\GRR_{EU}$ and $\GRR_{UD}$ (if $\GRR_{EU} = \GRR_{UD}$) should be at least as large as $10.73+\sqrt{10.73\times 9.73} = 20.95$. And in order to explain away the lower confidence limit $8.02$, these two confounding relative risks should be at least as large as $8.02+\sqrt{8.02\times 7.02} = 15.52.$ More generally, we can plot those values of $\RR_{EU}$ and $\RR_{UD}$ that would be required to explain away the effect estimate or the lower limit of the confidence interval. This is given in Figure \ref{fg::jointvalueplot}.
To explain away the point estimate the two parameters would have to lie on or above the solid line. To explain away the lower confidence limit the two parameters would have to lie on or above the dotted line. These results hold without any assumptions on the structure of the unmeasured confounding.
The numerical results above show that by using the new joint bounding factor it is even more implausible than using the Cornfield conditions that a genetic confounder explains away the relative risk between cigarette smoking and lung cancer.

\begin{figure}[h]
\centering
\includegraphics[width=0.8\textwidth]{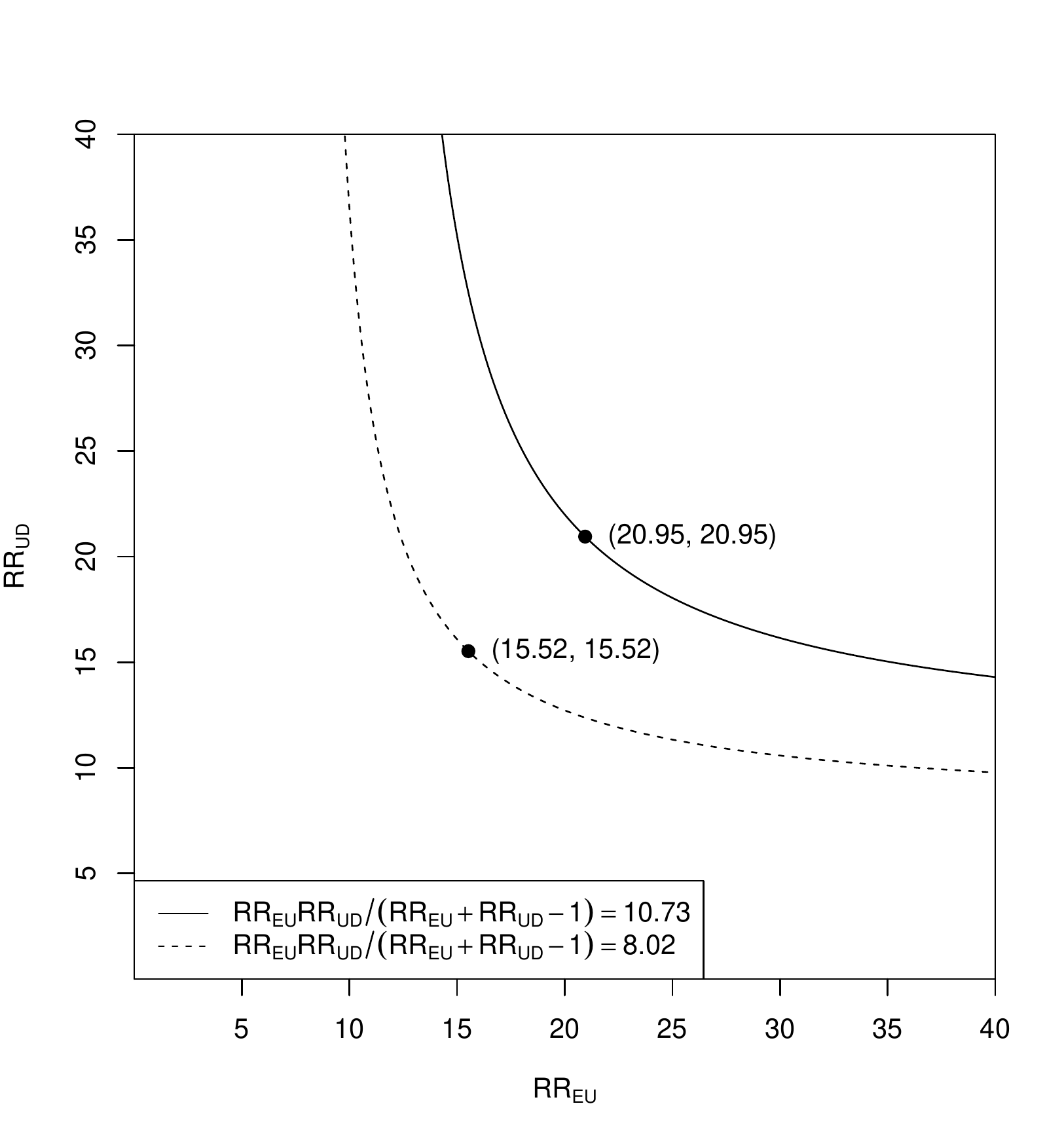}
\caption{The areas above the two lines are the joint values of $( \RR_{EU}, \RR_{UD})$ that can would be required to explain away the effect estimate $10.73$ and the lower confidence limit $8.02$.}\label{fg::jointvalueplot}
\end{figure}

More generally, we could consider corrected estimates and confidence intervals for the effect over a range of different values of the sensitivity analysis parameters, $\RR_{EU}$ and $\RR_{UD}$, as in Table \ref{tb::table}. The columns of Table \ref{tb::table} correspond to $\RR_{UD}$ and the rows to $\RR_{EU}$. The entries are the corrected estimates and confidence intervals for the effect under the different confounding scenarios. In general a table like this one is most informative for sensitivity analysis. SAS code to carry out such a sensitivity analysis and to provide such a table is given in Appendix 1.

\begin{table}[h]
\centering
\caption{Bounds on corrected estimates, lower confidence limits, and upper confidence limits for unmeasured confounding (each cell contains bounds on point estimate, lower and upper confidence limits; columns correspond to increasing strength of the risk ratio of $U$ on the outcome; rows correspond to increasing strength of risk ratio relating the exposure and $U$)}
\label{tb::table}
\resizebox{\columnwidth}{!}{
\begin{tabular}{|r|r|r|r|r|r|r|r|r|r|r|}
  \hline
 & 1.2 & 1.3 & 1.5 & 1.8 & 2.0 &2.5 & 3.0 & 5.0 & 8.0 &  10.0 \\ 
  \hline
1.2 & 10.43 & 10.32 & 10.13 & 9.94 & 9.84 & 9.66 & 9.54 & 9.30 & 9.17 & 9.12 \\ 
  & (7.80, & (7.71, & (7.57, & (7.43, & (7.35, & (7.22, & (7.13, & (6.95, & (6.85, & (6.82, \\ 
  & 13.96) & 13.81) & 13.56) & 13.30) & 13.16) & 12.92) & 12.76) & 12.45) & 12.27) & 12.21) \\ 
  \hline 
 1.3 & 10.32 & 10.16 & 9.90 & 9.63 & 9.49 & 9.24 & 9.08 & 8.75 & 8.56 & 8.50 \\ 
  & (7.71, & (7.59, & (7.40, & (7.20, & (7.09, & (6.91, & (6.79, & (6.54,& (6.40, & (6.35, \\ 
  & 13.81) & 13.60) & 13.26) & 12.89) & 12.70) & 12.37) & 12.15) & 11.71) & 11.46) & 11.38) \\ 
  \hline 
 1.5 & 10.13 & 9.90 & 9.54 & 9.14 & 8.94 & 8.58 & 8.35 & 7.87 & 7.60 & 7.51 \\ 
   & (7.57, & (7.40, & (7.13, & (6.83, & (6.68, & (6.42, & (6.24, & (5.88, & (5.68, & (5.61, \\ 
   & 13.56) & 13.26) & 12.76) & 12.23) & 11.97) & 11.49) & 11.17) & 10.53) & 10.17) & 10.05) \\ 
   \hline 
 1.8 & 9.94 & 9.63 & 9.14 & 8.61 & 8.35 & 7.87 & 7.55 & 6.91 & 6.56 & 6.44 \\ 
   & (7.43, & (7.20, & (6.83, & (6.44, & (6.24, & (5.88, & (5.64, & (5.17, & (4.90, & (4.81, \\ 
   & 13.30) & 12.89) & 12.23) & 11.52) & 11.17) & 10.53) & 10.11) & 9.25) & 8.78) & 8.62) \\ 
   \hline 
 2.0 & 9.84 & 9.49 & 8.94 & 8.35 & 8.05 & 7.51 & 7.15 & 6.44 & 6.04 & 5.90 \\ 
   & (7.35, & (7.09, & (6.68, & (6.24, & (6.01, & (5.61, & (5.35, & (4.81, & (4.51, & (4.41, \\ 
   & 13.16) & 12.70) & 11.97) & 11.17) & 10.77) & 10.05) & 9.57) & 8.62) & 8.08) & 7.90) \\ 
   \hline 
 2.5 & 9.66 & 9.24 & 8.58 & 7.87 & 7.51 & 6.87 & 6.44 & 5.58 & 5.10 & 4.94 \\ 
   & (7.22, & (6.91,& (6.42, & (5.88, & (5.61,& (5.13, & (4.81, & (4.17, & (3.81, & (3.69, \\ 
   & 12.92) & 12.37) & 11.49) & 10.53) & 10.05) & 9.19) & 8.62)& 7.47) & 6.82) & 6.61) \\ 
   \hline 
 3.0 & 9.54 & 9.08 & 8.35 & 7.55 & 7.15 & 6.44 & 5.96 & 5.01 & 4.47 & 4.29 \\ 
   & (7.13, & (6.79, & (6.24, & (5.64, & (5.35, & (4.81, & (4.46, & (3.74, & (3.34, & (3.21, \\ 
   & 12.76) & 12.15) & 11.17) & 10.11) & 9.57) & 8.62) & 7.98) & 6.70) & 5.98) & 5.74) \\ 
   \hline 
  5.0 & 9.30 & 8.75 & 7.87 & 6.91 & 6.44 & 5.58 & 5.01 & 3.86 & 3.22 & 3.00 \\ 
  & (6.95, & (6.54,& (5.88, & (5.17, & (4.81, & (4.17, & (3.74, & (2.89, & (2.41, & (2.25, \\ 
  & 12.45) & 11.71) & 10.53) & 9.25) & 8.62) & 7.47) & 6.70) & 5.17) & 4.31) & 4.02) \\ 
  \hline 
  8.0& 9.17 & 8.56 & 7.60 & 6.56 & 6.04 & 5.10 & 4.47 & 3.22 & 2.51 & 2.28 \\ 
  & (6.85, & (6.40, & (5.68, & (4.90, & (4.51, & (3.81, & (3.34, & (2.41, & (1.88, & (1.70, \\ 
  & 12.27) & 11.46) & 10.17) & 8.78) & 8.08) & 6.82) & 5.98) & 4.31) & 3.37) & 3.05) \\ 
  \hline 
  10.0& 9.12 & 8.50 & 7.51 & 6.44 & 5.90 & 4.94 & 4.29 & 3.00 & 2.28 & 2.04 \\ 
   &(6.82, & (6.35, & (5.61, & (4.81, & (4.41, & (3.69, & (3.21, & (2.25, & (1.70, & (1.52, \\ 
   & 12.21) & 11.38) & 10.05) & 8.62) & 7.90) & 6.61) & 5.74) & 4.02) & 3.05) & 2.73) \\ 
   \hline
\end{tabular}
}
\end{table}

\clearpage

\section{Discussion}

A crucial task in causal inference with observational studies is to assess the sensitivity of causal conclusions with respect to unmeasured confounding. In sensitivity analysis, because one is assessing the sensitivity of conclusions to the assumption of no unmeasured confounding, additional untestable assumptions may often seem undesirable and suspect to researchers. We have introduced a new joint bounding factor that allows researchers to conduct sensitivity analysis without assumptions, i.e., we provide an inequality, that is applicable without any assumptions, such that the sensitivity analysis parameters must satisfy the inequality if an unmeasured confounder is to explain away the observed effect estimate or reduce it to a particular level. We can obtain a conservative estimate of the true causal effect by dividing the observed relative risk by the bounding factor; the method does not assume a single binary confounder or no exposure-confounder interaction on the outcome.

Previous sensitivity analysis approaches in the literature often relied on the assumption of a single binary confounder and no-interaction between the effects of the exposure and the confounder on the outcome \cite{Rosenbaum::1983JRSSB, Lin::1998, Vanderweele::2011}. For example,  Schlesselman \cite{Schlesselman::1978} assumed a binary confounder, a common relative risk, $\gamma$, of the confounder on the outcome for both with and without exposure, i.e., a no interaction assumption. Under these assumptions, he obtained the bias factor $ \RR_{ED}^\obs  / \SRR_{ED} =  \{ 1 + (\gamma-1) \pr(U=1\mid E=1) \} / \{ 1 + (\gamma - 1)\pr(U=1\mid E=0)  \}$ for sensitivity analysis requiring specifications of $\gamma, \pr(U=1\mid E=1)$ and $\pr(U=1\mid E=0).$ Our result requires fewer assumptions and fewer sensitivity parameters (two rather than three). We further discuss in the eAppendix that, under Schlesselman's formula, if $\pr(U=1\mid E=1)/P(U=1\mid E=0)$ is constrained to be no larger than some limit $\RR_{EU}$, then the maximum bias factor that can be obtained from Schlesselman's formula is $\RR_{EU}\times \gamma/(\RR_{EU}+\gamma-1)$, which is the same as our bounding factor. Thus, in this setting Schlesselman's no interaction assumption does not strengthen the bounds; the no interaction assumption is unnecessary. Without the no interaction assumption, Flanders and Khoury \citep{Flanders::1990} and VanderWeele and Arah \citep{Vanderweele::2011}, derived general formulas for sensitivity analysis. However, unless the confounder is binary, these formulas require specifying a very large number of parameters. They also require specifying the prevalence of each confounder level. Flanders and Khoury \citep{Flanders::1990} derive bounds for the true causal relative risk for the exposed population which are potentially applicable without specifying the prevalence of the unmeasured confounder. However, without specifying the prevalence, their formula  only leads to a low threshold Cornfield condition, and these bounds are thus much weaker than those in this paper. We discuss further the relation between their results and ours in the eAppendix.

The relative risk scale is widely used for sensitivity analysis in epidemiology and elsewhere, but the risk difference scale is also often of interest and importance \cite{Poole::2010, Ding::2014}. We show, in the Appendix, that similar conditions for sensitivity analysis also hold for the risk difference. If we use similar sensitivity parameters on the {\it relative risk scale} for the risk difference estimate, then we can derive similar lower bounds on the effects and determine how much confounding is required to explain away an effect or reduce it to a specific level. See Appendix 2 for details. SAS code for this approach is also given in the eAppendix. We can also do sensitivity analysis for the risk difference using sensitivity parameters on the {\it risk difference scale}. Unfortunately, however, these conditions for the risk difference using risk difference sensitivity parameters then depend on the number of categories of the unmeasured confounder, and become weaker for confounders with more categories. This is not the case for sensitivity analysis of the risk difference (or the relative risk) if the sensitivity parameters themselves are expressed on the relative risk scale, in which case the bounding factor is applicable and is the same regardless of the number of categories. Due to this property, it is perhaps more suitable to conduct sensitivity analysis for the risk difference using sensitivity parameters on the relative risk scale. See Appendix 3 for further discussion.

The hazard ratio is widely used for analyzing data with time-to-event outcome. In the eAppendix, we show that under the assumption of having a rare outcome at the end of follow-up, the same bounding factor also applies to the hazard ratio with the confounder-outcome relative risk replaced by the confounder-outcome hazard ratio. Likewise similar results also apply to non-negative outcomes (e.g., counts or positive continuous outcomes) by replacing the confounder-outcome relative risk by the maximum ratio by which the confounder may increase the expected outcome comparing any two confounder categories.

The new joint bounding factor $(\GRR_{EU}\times \GRR_{UD}) / ( \GRR_{EU} + \GRR_{UD} - 1)$ plays a central role in our sensitivity analysis approach, which, in turn, gives us a new measure of the strength of unmeasured confounding induced by a confounder $U.$ 
Our approach has the advantage of making no assumptions about the structure of the unmeasured confounder or confounders, and of delivering conclusions much stronger than the original Cornfield conditions.

In general, a table with many different possible sensitivity analysis parameters including values that are quite extreme, such as Table \ref{tb::table} above, will be most informative. However, at the very least, in any observational study, researchers should report how much confounding would be needed to reduce the estimate, and how much confounding would be needed to reduce the confidence interval, to include the null. We believe that if this were always done in observational studies, the evidence for causality could much more easily be assessed and science would be better served.

\section*{Appendix 1: SAS Code}

The SAS code for the cigarette smoking and lung cancer example in Table \ref{tb::table} is given below. A researcher could modify the code for use in other examples by just changing the first few lines of code with the estimated observed relative controlling for only the measured covarates (RR=), and the lower and upper confidence interval for this estimate(RR\_Lower=, RR\_Upper=). The minimum and maximum strength of the unmeasured confounder can also be modified by adjusting the lines with ``RR\_EU='' and ``RR\_UD='' but we recommend always including at least some relatively large values, e.g., with $\RR_{EU}$ and $\RR_{UD}$ at least as high as $5$ so as to get a sense as to how an estimate would change under fairly severe confounding.

{\setlength{\baselineskip}{0.5\baselineskip}
\scriptsize
\begin{verbatim}
proc iml;
/*the point estimator and confidence interval of RR*/
RR = 10.73;
RR_Lower = 8.02;
RR_Upper = 14.36;
/*strenghth of confounding resulting from U*/
RR_EU = {1.2 1.3 1.5 1.8 2 2.5 3 4 5 6 8 10};
RR_UD = {1.2 1.3 1.5 1.8 2 2.5 3 4 5 6 8 10};
highthreshold = ROUND(RR + SQRT(RR*(RR-1)), 0.01);
rownames_EU = CHAR(RR_EU, NCOL(RR_EU), 1);
colnames_UD = CHAR(RR_UD, NCOL(RR_UD), 1);
BiasFactor  = J(NCOL(RR_EU), NCOL(RR_UD), 1);
SPACE       = J(NCOL(RR_EU), NCOL(RR_UD), " ");
LeftP        = J(NCOL(RR_EU), NCOL(RR_UD), "(");
Mid         = J(NCOL(RR_EU), NCOL(RR_UD), ",");
RightP       = J(NCOL(RR_EU), NCOL(RR_UD), ")");
RR_true = BiasFactor;
RR_true_Lower = BiasFactor;
RR_true_Upper = BiasFactor;
RR_true_CI = BiasFactor;
DO i=1 TO NCOL(RR_EU);
      Do j=1 to NCOL(RR_UD);
	      BiasFactor[i, j] = RR_EU[i]*RR_UD[j]/(RR_EU[i] + RR_UD[j] - 1);
		  RR_true[i, j] = ROUND(RR/BiasFactor[i, j], 0.01);
          RR_true_Lower[i, j] = ROUND(RR_Lower/BiasFactor[i, j], 0.01);
          RR_true_Upper[i, j] = ROUND(RR_Upper/BiasFactor[i, j], 0.01);
	  END;
END;
RR_true_CI = CATX(" ", CHAR(RR_true), LeftP, CHAR(RR_true_Lower), Mid, CHAR(RR_true_Upper), RightP);
print RR_true_CI[colname = colnames_UD
                 rowname = rownames_EU
                 label = "Bounds on corrected estimates and confidence intervals for unmeasured confounding 
                 (columns correspond to increasing strength of the risk ratio of U on the outcome; 
                 rows correspond to increasing strength of risk ratio relating the exposure and U)"];
run;
\end{verbatim}
}

\section*{Appendix 2: Conditions for the Risk Difference Using Sensitivity Parameters on the Relative Risk Scale}

As in the text we assume analysis is conducted conditional on, or within strata of the measured covariates $C$.
Define the bounding factor as $\BF_U = \GRR_{EU}\times \GRR_{UD} / ( \GRR_{EU} + \GRR_{UD} - 1)$, the prevalence of the exposure as $f = \pr(E=1)$, and the probabilities of the outcome with and without exposure as $p_1=\pr(D=1\mid E=1) $ and $p_0=\pr(D=1\mid E=0).$ The causal risk differences for the exposed and unexposed populations are
\begin{eqnarray*}
\SRD_{ED+} &=& p_1 - \sum_{k=0}^{K-1} \pr(D=1\mid E=0,U=k)\pr(U=k\mid E=1),\\ 
\SRD_{ED-}  &=&   \sum_{k=0}^{K-1} \pr(D=1\mid E=1,U=k)\pr(U=k\mid E=0) - p_0,
\end{eqnarray*}
and the causal risk difference for the whole population is 
\begin{eqnarray*}
\SRD_{ED} &=&  \sum_{k=0}^{K-1} \{  \pr(D=1\mid E=1,U=k)  - \pr(D=1\mid E=0,U=k) \}  \pr(U=k) \\
&=&f\SRD_{ED+}  + (1-f) \SRD_{ED-}.
\end{eqnarray*} 
We show in the eAppendix that the lower bounds for the causal risk differences are
\begin{eqnarray*}
&&\SRD_{ED+}  \geq   p_1 - p_0\times \BF_U,\\
&&\SRD_{ED-} \geq  p_1/\BF_U - p_0, \\
&&\SRD_{ED} \geq   (     p_1    - p_0\times \BF_U )  \times  \left\{    f  + (1-f)/\BF_U  \right\}
=(    p_1 / \BF_U   - p_0)  \times  \left\{    f\times \BF_U + (1-f)  \right\} .
\end{eqnarray*}
Note that even without knowing $f$, we can use the inequality $ \SRD_{ED} \geq \min (  \SRD_{ED+} ,    \SRD_{ED-}   ) $ to obtain a lower bound for $ \SRD_{ED}$. 

As an example, suppose the probabilities of the outcome with and without exposure are $p_1=0.25, p_0=0.1$, and therefore the observed risk difference is $\RD_{ED}^\obs  = p_1-p_0=0.15$. If we assume that the unmeasured confounding measures are $(\RR_{EU}, \RR_{UD}) = (2,2)$ with the joint bounding factor of $2\times 2/(2+2-1)=1.33$, then the true risk difference for the exposed is at least as large as $0.25-0.1\times 1.33 = 0.12$, the true risk difference for the unexposed is at least as large as $0.25/1.33-0.1=0.09$, and the true risk difference for the whole population is at least as large as $\min(0.12,0.09)=0.09$. If we further know that the prevalence of the exposure is $f=0.2$, the true risk difference for the whole population is at least as large as $  0.12\times 0.2+0.09\times 0.8 =0.10.$

The above results imply that, for an unmeasured confounder to reduce the observed risk difference to be $\SRD_{ED+}, \SRD_{ED-}$ and $\SRD_{ED}$ respectively, the Cornfield conditions for the joint bounding factor for the exposed, the unexposed, and the whole population, respectively, are
\begin{eqnarray*}
&&\BF_U \geq   (p_1 - \SRD_{ED+} ) / p_0 ,\\
&&\BF_U \geq   p_1 / (p_0+ \SRD_{ED-}) ,\\
&&\BF_U \geq     \frac{   \sqrt{   \{  \SRD_{ED} + p_0(1-f) -  p_1f   \}^2+4p_1p_0f(1-f)       }  -    \{  \SRD_{ED} + p_0(1-f) - p_1f   \} }{   2p_0f }.
\end{eqnarray*}
Note that if the true causal risk difference is $\SRD_{ED}=0$, the above conditions all reduce to $\BF_U\geq \RR_{ED}^\obs .$ 
Suppose, again, the probabilities of the observed outcome with and without exposure are $p_1=0.25, p_0=0.1$, and the prevalence of the exposure is $f=0.2$. For an unmeasured confounder $U$ to reduce the observed risk difference of $\RD_{ED}^\obs =0.15$ to a true risk difference of $\SRD_{ED} = 0.05$, the joint bounding factor resulting from the confounder must be at least as large as
\begin{eqnarray*}
&\BF_U \geq  \frac{   \sqrt{   (0.05+0.1\times 0.8 - 0.25\times 0.2)^2 +   4\times 0.25\times 0.1\times 0.2\times 0.8 }   -   (0.05+0.1\times 0.8 - 0.25\times 0.2)     }{  2\times 0.1\times 0.2  } = 1.74.&
\end{eqnarray*}
Therefore, as in the text both of the confounding measures $\RR_{EU}$ and $\RR_{UD}$ must be at least as large as $1.74$, and the maximum of them must be at least as large as $1.74+\sqrt{1.74\times 0.74} = 2.88.$

The above results are useful for apparently causative exposures with $\RD_{ED}^\obs  > 0$, which give (possibly positive) lower bounds for the causal risk differences. However, for apparently preventive exposure with $\RD_{ED}^\obs  < 0$, we need to modify the definition of $\GRR_{EU}$ as $\GRR_{EU} = \max_u \GRR^{-1}_{EU}(u)$. And we have the following analogous results on the upper bounds of the causal risk differences:
\begin{eqnarray*}
&&\SRD_{ E D+}  \leq   p_1\times \BF_U - p_0 ,\\
&&\SRD_{ E D-} \leq  p_1  - p_0/\BF_U, \\
&&\SRD_{ E D} \leq  (   p_1\times \BF_U - p_0)  \times  \left\{    f  + (1-f)/\BF_U  \right\}
=(    p_1  - p_0/\BF_U  ) \times  \left\{    f\times \BF_U + (1-f)  \right\} .
\end{eqnarray*}

Due to the linearity of the risk difference, we can also obtain the lower bound of the marginal risk differences averaged over the observed covariates $C$ using 
$\SRD_{ED+} = \sum_c \SRD_{ED|c+} \pr(C=c\mid E=1), \SRD_{ED-} = \sum_c \SRD_{ED|c-} \pr(C=c\mid E=0)$ and $\SRD_{ED} = \sum_c \SRD_{ED|c} \pr(C=c).$
In the eAppendix, we provide details and proofs for the results above, discuss statistical inference for the causal risk difference bounds under finite samples, and give formulas for how large the bounding factor would have to be to reduce an estimate or a confidence interval to $0$ or to some other specified quantity. In the eAppendix, we also provide software code to implement this sensitivity analysis approach for the risk difference.

\section*{Appendix 3: Conditions for the Risk Difference Using Sensitivity Parameters on the Risk Difference Scale}

In the previous Appendix, we considered sensitivity analysis for the risk difference with sensitivity analysis parameters on the relative risk scale. In this Appendix, we consider sensitivity analysis for the risk difference with parameters defined on the risk difference scale. Unfortunately, for the reasons described below, the results for the risk difference with parameters defined on the difference scale are not as practically useful as when the parameters are defined on the relative risk scale.

Let $\RD_{ED}^\obs  = \pr(D=1\mid E=1) -  \pr(D=1\mid E=0)$ denote the observed risk difference, and
\begin{eqnarray*}
\SRD_{ED} = \sum_{k=0}^{K-1} \{ \pr(D=1\mid E=1, U=k) - \pr(D=1\mid E=0, U=k)\} \pr(U=k)
\end{eqnarray*}
 denote the standardized risk difference.

Define $\alpha_k = \pr(U=k\mid E=1) - \pr(U=k\mid E=0)  $ as the difference in the probability that the confounder $U$ takes a particular value $k$ comparing exposed and unexposed. We use $\GRD_{EU} = \max_{k\geq 1}  |\alpha_k|$, the maximum of these absolute differences, to measure the exposure-confounder association on the risk difference scale, defined as the maximal risk difference of the exposure $E$ on the confounder $U$. Define $\beta_{k}^* = \pr(D=1\mid E=1, U=k) - \pr(D=1\mid E=1, U=0) $ and $\beta_{k} = \pr(D=1\mid E=0, U=k) - \pr(D=1\mid E=0, U=0)  $ as the difference in the probability of the outcome comparing the category $k$ and $0$ of the confounder $U$ with and without exposure. Define $\GRD_{UD|E=1}=\max_{k\geq 1}  |\beta_{k}^*|$ and $\GRD_{UD|E=0} = \max_{k\geq 1}  |\beta_{k}|$ as the maximums of these differences with and without exposure, respectively. We use $\GRD_{UD} = \max(\GRD_{UD|E=1}, \GRD_{UD|E=0})$ to measure the confounder-outcome association in the risk difference scale, defined as the maximal risk difference of the confounder $U$ on the outcome $D$.

We first consider a {\it binary} unmeasured confounder.
For binary confounder $U$, the maximal risk difference $\GRD_{EU} $ becomes the ordinary risk difference $\RD_{EU}$, and the maximal risk difference becomes the maximum of two conditional risk difference $\GRD_{UD} = \max(\RD_{UD|E=1}, \RD_{UD|E=0})$. We have that
$$
\RD_{EU} \times \GRD_{UD} \geq  \RD_{ED}^\obs  - \SRD_{ED},
$$
which further leads to the following low and high thresholds:
$$
\min(    \RD_{EU},  \GRD_{UD}   ) \geq \RD_{ED}^\obs  -  \SRD_{ED} ,
\qquad
\max(   \RD_{EU},  \GRD_{UD}   ) \geq \sqrt{  \RD_{ED}^\obs  -  \SRD_{ED} }  ,
$$
which generalize previous results under the null of zero causal effect of $E$ on $D$ \cite{Gastwirth::1998, Poole::2010, Ding::2014}.

For categorical confounder $U$, no simple form of the bounding factor is available, but we can still show that $\GRD_{EU}$ and $\GRD_{UD} $ must satisfy the following conditions:
\begin{eqnarray*}
\GRD_{EU} &\geq& (\RD_{ED}^\obs  -  \SRD_{ED} ) /(K-1),  \\ 
\GRD_{UD} &\geq& (\RD_{ED}^\obs  -  \SRD_{ED} )   /2,  \\
 \max(\GRD_{EU},  \GRD_{UD}   ) &\geq& \max\left\{  \sqrt{  (\RD_{ED}^\obs  -  \SRD_{ED} )   /(K-1) } , (\RD_{ED}^\obs  -  \SRD_{ED} ) /2    \right\} .
\end{eqnarray*}
When $K=3$ such as a three-level genetic confounder, these conditions reduce to
\begin{eqnarray}
\min(\GRD_{EU} , \GRD_{UD} ) \geq  (\RD_{ED}^\obs  -  \SRD_{ED} )  /2,\quad 
\max(\GRD_{EU} , \GRD_{UD} ) \geq \sqrt{ (\RD_{ED}^\obs  -  \SRD_{ED} )  /2}.
\end{eqnarray}

The results above generalize previous results \citep{Ding::2014} from the null hypothesis of no effect ($\SRD_{ED} = 0$) to alternative hypotheses ($\SRD_{ED}$ arbitrary). We show the proofs and extensions for the above results in the eAppendix.

We can see from above that the generalized Cornfield conditions for the risk difference under alternative hypotheses depend on the number of categories of $U$, and become less informative as the number of categories increases. 
Therefore, a binary confounder is not the most conservative case for sensitivity analysis with parameters expressed the risk difference scale.
However, the Cornfield conditions for the relative risk do not suffer from this problem. Therefore, it seems that it is more appropriate to conduct sensitivity analysis with parameters expressed on the risk ratio scale, and a binary confounder is the most conservative case for sensitivity analysis with parameters expressed on the risk ratio scale \citep{Wang::2006, Ichino::2008}.


\newpage

\begin{center}
\bf\Huge 
Supplementary Materials for ``Sensitivity Analysis Without Assumptions''
\end{center}

 \setcounter{section}{0}
 \setcounter{equation}{0}
 \setcounter{proposition}{0}
 \setcounter{lemma}{0}
 \setcounter{figure}{0}

\renewcommand{\theequation}{A.\arabic{equation}}
\renewcommand{\thelemma}{A.\arabic{lemma}}
\renewcommand{\theproposition}{A.\arabic{proposition}}
\renewcommand{\thesection}{Appendix~\arabic{section}}
\renewcommand{\thefigure}{A.\arabic{figure}}

The eAppendix contains the following nine sections:
\begin{enumerate}
[{Appendix}~1{:}]
\item
Three useful lemmas which are used repeatedly in the proofs in later sections;

\item 
The new bounding factor introduced in the main text and its implied Cornfield conditions with proofs;

\item
Another bounding factor with the exposure-confounder relationship on the odds ratio scale and its implied Cornfield conditions with proofs;

\item
Relations between the new bounding factor and some existing results including Schlesselman's formula \citep{Schlesselman::1978} and Flanders and Khoury's results \citep{Flanders::1990}; 

\item
Results for the risk difference using sensitivity parameters on the relative risk scale with proofs;

\item
SAS code for the risk difference using sensitivity parameters on the relative risk scale;

\item
Results for the risk difference using sensitivity parameters on the risk difference scale with proofs;

\item
A bounding factor for rare time-to-event outcome on the hazard ratio scale and its implied Cornfield conditions;

\item
A bounding factor for general nonnegative outcomes.

\end{enumerate}

\section{Useful Lemmas}
\begin{lemma}
\label{lemma::basic-factorial-derivative}
Define 
$
h(x) = (c_1 x  +1) / (c_2 x + 1). 
$
When $c_1  >  c_2$, $h'(x) > 0$ and $h(x)$ is increasing; when $c_1 \leq c_2$, $h'(x) \leq 0$ and $h(x)$ is non-increasing.
\end{lemma}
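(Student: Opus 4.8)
The plan is to compute $h'(x)$ directly via the quotient rule and then read off its sign. Writing $h(x) = (c_1 x + 1)/(c_2 x + 1)$, the quotient rule gives a numerator of the form $c_1(c_2 x + 1) - (c_1 x + 1)c_2$, and the whole expression is divided by $(c_2 x + 1)^2$. The point of the calculation is that the two cross terms $c_1 c_2 x$ cancel exactly, so the messy-looking numerator collapses to the constant $c_1 - c_2$. Thus I expect
$$
h'(x) = \frac{c_1 - c_2}{(c_2 x + 1)^2}.
$$

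From here the conclusion is immediate. Since the denominator $(c_2 x + 1)^2$ is a square it is nonnegative, and on the relevant domain (where $x$ and the constants $c_1, c_2$ are nonnegative, so that $c_2 x + 1 \geq 1 > 0$) it is strictly positive. Hence the sign of $h'(x)$ is governed entirely by the sign of $c_1 - c_2$: when $c_1 > c_2$ we get $h'(x) > 0$, so $h$ is strictly increasing, and when $c_1 \leq c_2$ we get $h'(x) \leq 0$, so $h$ is non-increasing, exactly as claimed.

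There is essentially no analytic obstacle here; the lemma is a one-line differentiation. The only point requiring a word of care is the standing assumption on the domain, namely that $c_2 x + 1$ does not vanish, so that $h$ is differentiable and the denominator of $h'$ stays positive. In the applications that follow, $x$ plays the role of an odds-type or probability-type quantity and $c_1, c_2$ are ratios of probabilities, all nonnegative, which guarantees $c_2 x + 1 \geq 1$; I would state this nonnegativity convention explicitly at the outset so that the sign argument for $h'$ is unambiguous.
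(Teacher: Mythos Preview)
Your proposal is correct and takes essentially the same approach as the paper: both compute $h'(x)$ via the quotient rule to obtain $h'(x) = (c_1 - c_2)/(c_2 x + 1)^2$ and then read off the sign. Your additional remark about the implicit nonnegativity/domain assumption is a welcome clarification that the paper's own proof leaves tacit.
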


\begin{proof}
[Proof of Lemma \ref{lemma::basic-factorial-derivative}]
The first derivative of $h(x)$ is
$$
h'(x) = \frac{  c_1(c_2 x + 1) - (c_1 x + 1) c_2  }{  (c_2 x + 1)^2 } = \frac{c_1 - c_2}{ (c_2 x+ 1)^2}.
$$
When $c_1 >c_2$, $h'(x) > 0$ and $h(x)$ is increasing in $x$. When $c_1 \leq c_2$, we have opposite results. 
\end{proof}

\begin{lemma}
\label{lemma::cornfield-increasing}
When $x,y > 1$, $h(x,y) =(xy)/(x+y-1)$ is increasing in both $x$ and $y$.
\end{lemma}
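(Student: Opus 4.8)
The plan is to exploit the symmetry $h(x,y)=h(y,x)$ so that it suffices to establish monotonicity in a single argument, say $x$, with $y>1$ held fixed; the conclusion for the $y$-direction then follows verbatim after interchanging the roles of the two variables. To handle the $x$-direction I would differentiate $h$ directly via the quotient rule, in exactly the spirit of the proof of Lemma \ref{lemma::basic-factorial-derivative}.

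Concretely, writing $h(x,y)=xy/(x+y-1)$ and differentiating in $x$ gives
\begin{eqnarray*}
\frac{\partial h}{\partial x} &=& \frac{ y(x+y-1) - xy }{ (x+y-1)^2 } = \frac{ y(y-1) }{ (x+y-1)^2 },
\end{eqnarray*}
where the numerator collapses after cancellation of the $xy$ terms. The remaining task is a sign check: since $x,y>1$ we have $x+y-1>1>0$, so the denominator is strictly positive, while $y>1$ forces $y(y-1)>0$. Hence $\partial h/\partial x>0$, so $h$ is strictly increasing in $x$. By symmetry, $\partial h/\partial y = x(x-1)/(x+y-1)^2 > 0$ as well, which completes the argument.

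There is no genuine obstacle here: the whole content is the observation that the numerator of the derivative simplifies to $y(y-1)$, after which positivity on the domain $x,y>1$ is immediate. The only point demanding a moment's care is confirming that the denominator never vanishes, which the constraint $x,y>1$ guarantees. If a derivative-free route were preferred, I would instead use the identity $1/h(x,y) = 1-(1-1/x)(1-1/y)$; increasing $x$ decreases $1/x$, hence increases $(1-1/x)(1-1/y)$ (the factor $1-1/y$ being positive since $y>1$), hence decreases $1/h$ and thus increases $h$. This reproduces the same monotonicity and again relies only on $x,y>1$. This lemma is precisely the monotonicity property underpinning the later comparative-statics claims about the joint bounding factor $\GRR_{EU}\GRR_{UD}/(\GRR_{EU}+\GRR_{UD}-1)$.
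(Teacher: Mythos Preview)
Your proof is correct and matches the paper's approach essentially line for line: both compute $\partial h/\partial x = y(y-1)/(x+y-1)^2$, note positivity for $x,y>1$, and invoke symmetry for the $y$-direction. The derivative-free identity you offer at the end is a nice supplementary observation but is not in the paper.
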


\begin{proof}
[Proof of Lemma \ref{lemma::cornfield-increasing}]
The first partial derivative of $h(x,y)$ with respect to $x$ is
$$
\frac{\partial  h(x,y)}{ \partial x} = \frac{   y(x+y-1) - xy  }{  (x+y-1)^2 } = \frac{y(y-1)}{  (x+y-1)^2 }.
$$
When $x,y>1$, $\partial h(x,y)/ \partial x >0$ and $h(x,y)$ is increasing in $x$. By symmetry, the conclusion holds also for $y.$
\end{proof}

\begin{lemma}
\label{lemma::increasing}
When $x,y>1$, $h(x,y) = ( \sqrt{xy}  + 1  )/(  \sqrt{x}  + \sqrt{y} ) $ is increasing in both $x$ and $y$.
\end{lemma}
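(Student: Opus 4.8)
The plan is to remove the square roots by the substitution $u = \sqrt{x}$ and $v = \sqrt{y}$, which turns $h$ into the rational function $g(u,v) = (uv+1)/(u+v)$. Since $x,y>1$ forces $u,v>1$, and since $x \mapsto \sqrt{x}$ is strictly increasing, showing that $h$ is increasing in $x$ (respectively $y$) is equivalent to showing that $g$ is increasing in $u$ (respectively $v$). This reduces the problem to a monotonicity computation of the same flavor as the one carried out for Lemma~\ref{lemma::cornfield-increasing}, so no genuinely new idea is required.

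Next I would differentiate $g$. The function $g$ is symmetric in $u$ and $v$, so it suffices to treat one variable. The quotient rule gives
\[
\frac{\partial g(u,v)}{\partial u} = \frac{v(u+v) - (uv+1)}{(u+v)^2} = \frac{v^2 - 1}{(u+v)^2}.
\]
Because $v>1$ we have $v^2 - 1 > 0$, so this partial derivative is strictly positive; by the symmetry $g(u,v)=g(v,u)$ the same holds for $\partial g/\partial v$. Composing with the increasing maps $u=\sqrt x$, $v=\sqrt y$ then yields that $h$ is increasing in both $x$ and $y$, which is the claim.

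As an alternative that avoids the change of variables, one can differentiate $h$ directly: writing the numerator as $\sqrt{x}\sqrt{y}+1$ and the denominator as $\sqrt{x}+\sqrt{y}$, a quotient-rule computation collapses to
\[
\frac{\partial h(x,y)}{\partial x} = \frac{y-1}{2\sqrt{x}\,(\sqrt{x}+\sqrt{y})^2},
\]
which is positive exactly when $y>1$, and symmetrically for $y$. I expect the only point demanding any care to be the algebraic cancellation in the numerator, where the cross term $\sqrt{x}\sqrt{y}$ must drop out so that what remains is simply $y-1$ (respectively $x-1$); this cancellation is precisely what makes the positivity hinge on the hypothesis $x,y>1$. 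There is no substantive obstacle beyond this bookkeeping, so I would present the substitution argument as the main proof for its brevity and note the direct computation as a cross-check.
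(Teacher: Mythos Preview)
Your proposal is correct and essentially matches the paper's proof: the paper differentiates $h$ directly and obtains exactly the formula $\partial h/\partial x = (y-1)/\bigl(2\sqrt{x}(\sqrt{x}+\sqrt{y})^2\bigr)$ that you record as your alternative, then invokes symmetry. Your substitution $u=\sqrt{x}$, $v=\sqrt{y}$ is just a cosmetic repackaging of the same calculation that tidies the algebra, not a different argument.
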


\begin{proof}
[Proof of Lemma \ref{lemma::increasing}]
The first partial derivative of $h(x,y)$ with respect to $x$ is
$$
\frac{\partial h(x,y) }{ \partial x} = \frac{  \frac{1}{2}\sqrt{y/x} (  \sqrt{x} + \sqrt{y}) -   \frac{1}{2}  ( \sqrt{xy}  + 1  ) /\sqrt{x}    }{   (  \sqrt{x} + \sqrt{y})^2   }
=  \frac{ y-1    }{  2\sqrt{x} (  \sqrt{x} + \sqrt{y})^2   }.
$$
When $x,y>1$, $\partial h(x,y)/ \partial x >0$ and $h(x,y)$ is increasing in $x$. By symmetry, the conclusion holds also for $y.$
\end{proof}

\section{The New Bounding Factor and Implied Cornfield Conditions}
\label{sec::cornfield-alternative}

\subsection{Technical Measure-Theoretical Details}

This subsection presents the technical framework for the proofs. A less technical reader can skip this subsection and move directly to the next subsection \ref{sec::bias-factor} on the new bounding factor. Throughout the eAppendix, we allow the unmeasured confounder $U$ to take arbitrary values, which is a measurable mapping from probability space $(\Omega, \mathcal{F}, \pr )$ to a measurable space $( \Upsilon, \mathcal{U})$. For $\V \in \mathcal{U}$, we define $F_1( \V ) = \pr(U\in \V \mid E=1)$ as the distribution of $U$ with exposure, $F_0(\V) = \pr(U\in \V \mid E=0)$ as the distribution of $U$ without exposure, and $F(\V) = \pr(U\in \V)$ as the marginal distribution of $U$. The distributions $F_1(\cdot), F_0(\cdot)$ and $F(\cdot)$ are measurable mappings from $\Upsilon$ to $[0, 1]$, which correspondingly induce three probability measures on the measurable space $( \Upsilon, \mathcal{U})$. When the confounder $U$ is a scalar on the real line, these definitions reduce to $F_1( u ) = \pr(U\leq u\mid E=1)$, the cumulative distribution function (CDF) of $U$ with exposure, $F_0(u) = \pr(U\leq u\mid E=0)$, the CDF of $U$ without exposure, and $F(u) = \pr(U\leq u)$, its marginal CDF. Correspondingly, the CDFs, $F_1, F_0,$ and $ F$, also induce three measures on the real line. In the following, we assume that the measure $F_1$ is absolutely continuous with respect to the measure $F_0$, with the Radon--Nikodym derivative defined as $\GRR_{EU}(u) = F_1( du) / F_0( du)$, which is the generalized relative risk of $E$ on $U$ at $U=u$. The absolute continuous assumption about $F_1$ and $F_0$ holds automatically for categorical and absolutely continuous unmeasured confounder $U$. For general confounder $U$, this is only a mild regularity condition.

\subsection{The New Bounding Factor}\label{sec::bias-factor}

We assume for the next several sections that analysis is done conditional on, or within strata of the measured confounders $C.$
We define the maximal relative risk of $E$ on $U$ as $\GRR_{EU} = \max_u \GRR_{EU}(u)$. Define $r(u) = \pr(D=1\mid E=0, U=u)$ and $r^*(u) = \pr(D=1\mid E=1, U=u)$ as the probabilities of the outcome within stratum $U=u$ without and with exposure. Define the maximal relative risk of $U$ on $D$ as $\GRR_{UD|E=0} =  \max_u r(u) / \min_u r(u)$ and $ \GRR_{UD|E=1} = \max_u r^*(u)/ \min_u r^*(u)$ without and with exposure, and $\GRR_{UD} = \max(  \GRR_{UD|E=0}, \GRR_{UD|E=1} )$ as the maximum of these two relative risks. The maxima and minima are taken over the space $\Upsilon$, and hereinafter. When $U$ is a categorical confounder with levels $0,1,\ldots, K-1$, the definitions above reduce to the definitions in the main text. 
To allow for causal interpretations, we invoke the counterfactural or potential outcomes framework, with $D_i(1)$ and $D_i(0)$ being the potential outcomes for individual $i$ with and without the exposure, respectively; we also need to
make the ignorability assumption \citep{Rosenbaum::1983} $E\ind \{ D(1), D(0) \} \mid U$.

The observed relative risk of the exposure $E$ on the outcome $D$ is
\begin{eqnarray*}
\RR_{ED}^\obs  =   \frac{  \int   \pr(D=1\mid E=1, U=u ) F_1(du)  }{ \int  \pr(D=1\mid E=0, U=u)  F_0(du) }  =  \frac{  \int   r^*(u)  F_1(du)  }{ \int    r(u) F_0(du) },
\end{eqnarray*}
where the integrals are over $\Upsilon$ and hereinafter.
The relative risks standardized by the exposed, the unexposed, and the whole population are as follows:
\begin{eqnarray*}
\SRR_{ED+}  =  \frac{  \int    r^*(u) F_1(du) }{  \int   r(u)  F_1(du) },
\quad
\SRR_{ED-}  =  \frac{  \int    r^*(u) F_0(du)  }{  \int    r(u) F_0(du) },
\quad
\SRR_{ED}   =  \frac{  \int  r^*(u) F(du)  }{  \int   r(u) F(du) }.
\end{eqnarray*}
When unmeasure confounder $U$ is categorical, $\SRR_{ED}$ reduces to the form in the main text, and all other relative risk measures can be simplifies by replacing integrations by summations.
The corresponding confounding relative risks standardized by the exposed, the unexposed, and the whole population are
$$
\CRR_{ED+} = { \RR_{ED}^\obs \over  \SRR_{ED+}} =   \frac{  \int   r(u)  F_1(du)  }{  \int    r(u) F_0(du) }  ,
\quad 
\CRR_{ED-}  = {\RR_{ED}^\obs  \over \SRR_{ED-}}  =  \frac{  \int    r^*(u) F_1(du) }{  \int    r^*(u) F_0(du) } ,
$$
and $\CRR_{ED}     = \RR_{ED}^\obs  /  \SRR_{ED}  $.
Similar to Lee \cite{Lee::2011}, we have that $\SRR_{ED}$ is a weighted average of $\SRR_{ED+}$ and $\SRR_{ED-}$, and $\CRR_{ED}$ is a harmonic average of $\CRR_{ED+}$ and $\CRR_{ED-}$.

\begin{proposition}
\label{prop::average}
We have 
$$
\SRR_{ED} = w \SRR_{ED+} + (1-w) \SRR_{ED-},\quad 
1/\CRR_{ED} = w/ \CRR_{ED+} + (1-w)/ \CRR_{ED-}  ,
$$
where $f=\pr(E=1)$ and $w$ is a weight between zero and one:
$$
w = \frac{ f   \int   r(u) F_1(du)    }{  f  \int   r(u) F_1(du)  + (1-f)  \int   r(u) F_0(du) } \in [0,1] .
$$
\end{proposition}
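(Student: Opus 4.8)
The plan is to reduce both identities to a single structural fact: the marginal law of $U$ is the exposure-weighted mixture of its conditional laws, namely $F(du) = f F_1(du) + (1-f) F_0(du)$, which is simply the law of total probability $\pr(U\in\V) = f\,\pr(U\in\V\mid E=1) + (1-f)\,\pr(U\in\V\mid E=0)$. Once this is in hand everything becomes bookkeeping with four scalar integrals, so I would first abbreviate $a_1 = \int r^*(u)F_1(du)$, $b_1 = \int r(u)F_1(du)$, $a_0 = \int r^*(u)F_0(du)$, and $b_0 = \int r(u)F_0(du)$, so that $\SRR_{ED+} = a_1/b_1$, $\SRR_{ED-} = a_0/b_0$, $\CRR_{ED+} = b_1/b_0$, $\CRR_{ED-} = a_1/a_0$, and $\RR_{ED}^\obs = a_1/b_0$.

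For the first identity I would integrate $r^*$ and $r$ against the mixture to get $\int r^*(u) F(du) = f a_1 + (1-f)a_0$ and $\int r(u) F(du) = f b_1 + (1-f)b_0$, hence $\SRR_{ED} = \{fa_1 + (1-f)a_0\}/\{fb_1 + (1-f)b_0\}$. Writing this ratio of sums as a weighted average of the component ratios $a_1/b_1$ and $a_0/b_0$ forces the weights to be proportional to the denominator contributions $fb_1$ and $(1-f)b_0$; normalizing gives exactly $w = fb_1/\{fb_1 + (1-f)b_0\}$, which is the stated weight since $b_1 = \int r(u)F_1(du)$ and $b_0 = \int r(u)F_0(du)$. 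A one-line substitution then verifies $w\,\SRR_{ED+} + (1-w)\SRR_{ED-} = \SRR_{ED}$.

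For the second identity I would compute $\CRR_{ED} = \RR_{ED}^\obs/\SRR_{ED} = (a_1/b_0)\cdot\{fb_1+(1-f)b_0\}/\{fa_1+(1-f)a_0\}$, so that $1/\CRR_{ED} = b_0\{fa_1+(1-f)a_0\}/[a_1\{fb_1+(1-f)b_0\}]$. Since $1/\CRR_{ED+} = b_0/b_1$ and $1/\CRR_{ED-} = a_0/a_1$, I would substitute the same $w$ into $w/\CRR_{ED+} + (1-w)/\CRR_{ED-}$ and simplify; the key cancellation is that the factor $fb_1 + (1-f)b_0$ in the denominator of $w$ matches the denominator of $\SRR_{ED}$, after which the numerator collapses to $b_0\{fa_1 + (1-f)a_0\}$ and the two sides agree. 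This also justifies the term \emph{harmonic average}: $1/\CRR_{ED}$ is an arithmetic average of the reciprocals, so $\CRR_{ED}$ is a weighted harmonic mean of $\CRR_{ED+}$ and $\CRR_{ED-}$.

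There is no genuine analytic obstacle here; the content is entirely the mixture decomposition plus algebra. The only point requiring a moment's care, and the thing I would highlight, is that the very same weight $w$ serves both identities. This is not a coincidence: both decompositions route through the common denominator $\int r(u)F(du) = fb_1 + (1-f)b_0$, and $w$ is precisely the share of that denominator coming from the exposed stratum. I would state the mixture fact explicitly as the organizing principle, so that the two verifications read as immediate consequences rather than as separate computations.
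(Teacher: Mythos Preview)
Your proposal is correct and follows essentially the same route as the paper: both use the mixture decomposition $F(du)=fF_1(du)+(1-f)F_0(du)$ and then carry out the algebra to identify the weight $w$. Your abbreviations $a_1,b_1,a_0,b_0$ make the bookkeeping cleaner, and you are in fact more explicit than the paper about verifying the second (harmonic-average) identity, which the paper leaves implicit after displaying the decomposition for $\SRR_{ED}$.
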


\begin{proof}[Proof of Proposition \ref{prop::average}.]
The conclusions follow from the following decomposition:
\begin{eqnarray*}
\SRR_{ED} &=&  \frac{  \int  r^*(u) F(du)  }{  \int   r(u) F(du) } 
= \frac{   f \int  r^*(u) F_1(du) + (1-f)\int  r^*(u) F_0(du)   }{  f \int   r(u) F_1(du) + (1-f)  \int   r(u) F_0(du) }  \\
&=&        \frac{ f   \int   r(u) F_1(du)    }{   f  \int   r(u) F_1(du)  + (1-f)  \int   r(u) F_0(du) }  
              \times   \frac{   \int  r^*(u) F_1(du)   }{    \int   r(u) F_0(du)  }   \\
&&              +
              \frac{ (1-f)  \int   r(u) F_0(du)   }{   f  \int   r(u) F_1(du)  + (1-f)  \int   r(u) F_0(du) }  
              \times  \frac{     \int   r^*(u) F_0(du)     }{   \int   r(u) F_0(du)        }.
\end{eqnarray*}
\end{proof}

The confounding relative risks can be bounded from above by the bounding factor 
$$
\BF_U =  { \GRR_{EU}\times \GRR_{UD} \over  \GRR_{EU} +  \GRR_{UD}  - 1 },
$$ 
as shown in the following proposition.

\begin{proposition}
\label{prop::bound-CRR}
The confounding relative risks can be bounded from above by
\begin{eqnarray*} 
\CRR_{ED+} = \frac{\RR_{ED}^\obs}{\SRR_{ED+}} \leq   \BF_U, \quad 
\CRR_{ED-} = \frac{\RR_{ED}^\obs}{\SRR_{ED-}} \leq \BF_U,\quad 
\CRR_{ED} = \frac{\RR_{ED}^\obs}{\SRR_{ED}} \leq  \BF_U.
\end{eqnarray*}
\end{proposition}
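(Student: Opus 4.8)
The plan is to prove the three bounds in turn: handle $\CRR_{ED+}$ and $\CRR_{ED-}$ by one and the same argument, then deduce the bound on $\CRR_{ED}$ from Proposition \ref{prop::average}. First I would rewrite $\CRR_{ED+}$ using the Radon--Nikodym derivative $\GRR_{EU}(u)=F_1(du)/F_0(du)$, so that
\begin{eqnarray*}
\CRR_{ED+}=\frac{\int r(u)\,F_1(du)}{\int r(u)\,F_0(du)}=\frac{\int r(u)\,\GRR_{EU}(u)\,F_0(du)}{\int r(u)\,F_0(du)}.
\end{eqnarray*}
This exhibits $\CRR_{ED+}$ as a ratio of two linear functionals of the pair $(r,\GRR_{EU}(\cdot))$, subject to exactly three structural constraints: the pointwise bound $0\le\GRR_{EU}(u)\le\GRR_{EU}$; the normalization $\int\GRR_{EU}(u)\,F_0(du)=\int F_1(du)=1$; and the bounded range $\max_u r(u)/\min_u r(u)=\GRR_{UD|E=0}\le\GRR_{UD}$. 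Writing $a=\GRR_{EU}$ and $b=\GRR_{UD}$, the goal reduces to the purely analytic claim that this ratio is at most $ab/(a+b-1)=\BF_U$.

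Second, I would identify the extremal (two-point) confounder as the worst case. Because the functional is linear in $\GRR_{EU}(\cdot)$ and in $r$ separately, the supremum is attained when $\GRR_{EU}(\cdot)$ is bang--bang, taking only the values $0$ and $a$, and $r$ takes only $r_{\min}$ and $r_{\max}$, with the large value of $\GRR_{EU}$ aligned with $r_{\max}$; the normalization then forces mass $1/a$ on $\{\GRR_{EU}=a,\ r=r_{\max}\}$. Evaluating the ratio there gives $a\,r_{\max}/\{r_{\max}+(a-1)r_{\min}\}=a\rho/(a+\rho-1)$ with $\rho=r_{\max}/r_{\min}$. By Lemma \ref{lemma::cornfield-increasing} this is increasing in $\rho$, and since $\rho\le b$ it is at most $ab/(a+b-1)=\BF_U$. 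Lemma \ref{lemma::basic-factorial-derivative} plays the analogous role at the intermediate step of allocating the exposure-side mass, where the ratio appears in the form $(ax+1)/(x+1)$.

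The hard part will be justifying this reduction rigorously for a general (categorical, continuous, or mixed) confounder, since "aligning the extremes" must be argued at the level of measures, not pictures. I expect the cleanest route is to bypass the rearrangement/bathtub machinery and instead prove the sharp inequality directly: setting $\psi(u)=ab-(a+b-1)\GRR_{EU}(u)$, the claim $\CRR_{ED+}\le\BF_U$ is equivalent to $\int r\,\psi\,F_0(du)\ge 0$. Bounding $r$ below by $r_{\min}$ where $\psi\ge 0$ and above by $r_{\max}$ where $\psi<0$, and invoking $r_{\max}\le b\,r_{\min}$, reduces the task to showing $\int\psi^-\,F_0(du)\le a-1$, where $\psi^-$ is the negative part. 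Since $\psi^-=(a+b-1)(\GRR_{EU}(\cdot)-g^*)^+$ with $g^*=ab/(a+b-1)$, the convexity (chord) bound $(g-g^*)^+\le g\,(a-g^*)/a$ on $[0,a]$, combined with $\int\GRR_{EU}(u)\,F_0(du)=1$, closes the estimate with equality. This uses only the pointwise constraints and the normalization, so it is valid in the measure-theoretic framework of the subsection, and it matches the extremal value found above.

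Finally, $\CRR_{ED-}$ follows by the identical computation with $r^*$ in place of $r$, its range being controlled by $\GRR_{UD|E=1}\le\GRR_{UD}$. The whole-population bound then drops out of Proposition \ref{prop::average} with no further optimization: since $1/\CRR_{ED}=w/\CRR_{ED+}+(1-w)/\CRR_{ED-}$ with $w\in[0,1]$, and each reciprocal is at least $1/\BF_U$, we obtain $1/\CRR_{ED}\ge 1/\BF_U$, that is, $\CRR_{ED}\le\BF_U$.
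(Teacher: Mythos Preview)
Your proposal is correct and arrives at the same bound, but the route differs from the paper's. The paper proves the bound on $\CRR_{ED+}$ by an explicit \emph{binary reduction}: it rewrites
\[
\CRR_{ED+}=\frac{w_1\max_u r(u)+(1-w_1)\min_u r(u)}{w_0\max_u r(u)+(1-w_0)\min_u r(u)},
\]
with the weights $w_1,w_0$ given by normalized integrals of $r(u)-\min_u r(u)$ against $F_1,F_0$; shows $\Gamma:=w_1/w_0\le\GRR_{EU}$ directly from $F_1(du)=\GRR_{EU}(u)F_0(du)$; then treats the resulting expression as a function of $w_1$ and uses Lemma~\ref{lemma::basic-factorial-derivative} to locate the maximum at $w_1=1$, followed by Lemma~\ref{lemma::cornfield-increasing} to replace $\Gamma$ by $\GRR_{EU}$ and $\GRR_{UD|E=0}$ by $\GRR_{UD}$. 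Your direct argument---recasting the inequality as $\int r\,\psi\,F_0(du)\ge 0$ with $\psi(u)=ab-(a+b-1)\GRR_{EU}(u)$, splitting on the sign of $\psi$, and closing with the chord bound $(g-g^*)^+\le g(a-g^*)/a$ together with $\int\GRR_{EU}(u)\,F_0(du)=1$---is a valid alternative that avoids the binary reweighting identity altogether. The paper's approach makes the ``binary is worst case'' structure transparent (which also underpins the sharpness claim), while yours is a self-contained integral inequality that never needs to name the extremal configuration. For $\CRR_{ED-}$ and the whole-population bound via Proposition~\ref{prop::average}, the two proofs coincide.
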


\begin{proof}[Proof of Proposition \ref{prop::bound-CRR}.]
In the following proof, we first discuss $\CRR_{ED+}.$ The key observation is to write $\CRR_{ED+}$ in terms of a binary confounder with two levels corresponding to $\max_u r(u)$ and $\min_u r(u)$. To be more specific, we have that 
\begin{eqnarray*}
\CRR_{ED+} = \frac{   w_1 \max_u r(u)  + (1 - w_1)  \min_u r(u) }{  w_0 \max_u r(u)  +(1 - w_0) \min_u r(u)    } ,
\end{eqnarray*}
where 
\begin{eqnarray*}
w_1 =   \frac{  \int   \{ r(u)  - \min_u r(u) \}  F_1(du)  }{  \max_u r(u) - \min_u r(u) }, &&
1-w_1 =  \frac{  \int   \{ \max_u r(u) - r(u) \}  F_1(du)  }{  \max_u r(u) - \min_u r(u) } ,\\
w_0 =  \frac{  \int \{ r(u)  - \min_u r(u)\}   F_0(du)  }{  \max_u r(u) - \min_u r(u) }, &&
1-w_0 =  \frac{  \int \{ \max_u r(u) - r(u)\}  F_0(du)   }{  \max_u r(u) - \min_u r(u) } .
\end{eqnarray*}
Define $\Gamma = w_1/w_0$, and we have
\begin{eqnarray*}
\Gamma &=&\frac{ w_1}{w_0} = 
\frac{  \int   \{ r(u)  - \min_u r(u) \}  F_1(du) }{  \int \{ r(u)  - \min_u r(u)\}   F_0(du)  }
= \frac{  \int \{ r(u)  - \min_u r(u)\}  \GRR_{EU}(u)  F_0(du)  }{  \int \{ r(u)  - \min_u r(u)\}   F_0(du) } \\
&\leq& \frac{  \max_u \GRR_{EU}(u) \times  \int \{ r(u)  - \min_u r(u)\}   F_0(du)  }{  \int \{ r(u)  - \min_u r(u)\}   F_0(du) }
=\GRR_{EU}.
\end{eqnarray*} 
We can write $w_0 = w_1 / \Gamma$, and therefore we have
$$
\CRR_{ED}^+ =  \frac{  \{ \max_u r(u)  - \min_u r(u) \}  \times w_1 + \min_u r(u)   } {  \{ \max_u r(u)  - \min_u r(u) \} /\Gamma  \times w_1 + \min_u r(u)        }.
$$
In the following, we divide our discussion into two cases. If $\Gamma >  1$, then $\CRR_{ED}^+$ is increasing in $w_1$ according to Lemma \ref{lemma::basic-factorial-derivative}, and it attains the maximum at $w_1=1$. Thus we have
$$
\CRR_{ED}^+ \leq  \frac{   \Gamma \times \GRR_{UD|E=0}  }{  \Gamma +  \GRR_{UD|E=0}  - 1 }
\leq   \frac{   \GRR_{EU} \times \GRR_{UD|E=0}  }{ \GRR_{EU} +  \GRR_{UD|E=0}  - 1 },
$$
where the second inequality follows from Lemma \ref{lemma::cornfield-increasing}.
If $\Gamma \leq 1$, then $\CRR_{ED}^+$ is non-increasing in $w_1$, and it attains the maximum at $w_1=0$. Thus we have
$$
\CRR_{ED}^+ \leq  1 \leq  \frac{   \GRR_{EU} \times \GRR_{UD|E=0}  }{ \GRR_{EU} +  \GRR_{UD|E=0}  - 1 },
$$
where the the second inequality again follows from Lemma \ref{lemma::cornfield-increasing}.

The same discussion applies to $\CRR_{ED}^-$, and we can obtain that 
\begin{eqnarray*}
\CRR_{ED}^-   \leq   \frac{  \GRR_{EU}\times  \GRR_{UD|E=1}    }{  \GRR_{EU} +  \GRR_{UD|E=1}     - 1}.
\end{eqnarray*}
Using the fact $1 / \CRR_{ED} = w / \CRR_{ED}^+ +(1-w) /  \CRR_{ED}^- $, we know that
\begin{eqnarray*}
\frac{1 }{ \CRR_{ED}  }  \geq  \left(   \frac{     \GRR_{EU} \times  \GRR_{UD}    }{  \GRR_{EU} +  \GRR_{UD}    - 1} \right)^{-1},
\end{eqnarray*}
and the conclusion follows.
\end{proof}

\subsection{The Implied Cornfield Conditions}

Proposition \ref{prop::bound-CRR} says that the bounding factor is larger than or equal to all the confounding relative risks. It can be viewed as the Cornfield condition for the joint value of $(\GRR_{EU}, \GRR_{UD})$ in order to reduce the observed relative risk of $\RR_{ED}^\obs$ to the causal relative risk of $\SRR_{ED}$. If we specify one of the unmeasured confounding measure, for example $\GRR_{EU}$, then we can solve \ref{prop::bound-CRR} and obtain the lower bound of the other confounding measure:
$$
 \GRR_{UD} \geq   \frac{\GRR_{EU}\times  \RR_{ED}^\obs  -  \RR_{ED}^\obs  }{ \GRR_{EU} \times \SRR_{ED} - \RR_{ED}^\obs} .
$$ 
When $ \SRR_{ED}=1$, the above lower bound reduces to
$$
 \GRR_{UD} \geq   \frac{\GRR_{EU}\times  \RR_{ED}^\obs  -  \RR_{ED}^\obs  }{ \GRR_{EU}  - \RR_{ED}^\obs} .
$$
Further,  Proposition \ref{prop::bound-CRR}  implies the following Cornfield-type conditions for $\GRR_{EU}$ and $ \GRR_{UD}$.

\begin{proposition}
\label{prop::cornfield-conditions}
We have the following Cornfield conditions:
$$
\min(\GRR_{EU},  \GRR_{UD}) \geq \CRR_{ED}  ,\quad 
\max(\GRR_{EU},  \GRR_{UD}) \geq  \CRR_{ED} + \sqrt{  \CRR_{ED} ( \CRR_{ED} - 1)  } .
$$
\end{proposition}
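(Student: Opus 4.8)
The plan is to deduce both Cornfield conditions directly from the single bounding-factor inequality supplied by Proposition \ref{prop::bound-CRR}, after which everything reduces to elementary algebra. To lighten notation I would write $a = \GRR_{EU}$, $b = \GRR_{UD}$, and $c = \CRR_{ED}$, noting that $a,b \geq 1$ by construction (the maximal relative risks are averages-or-maxima of ratios bounded below by one). Proposition \ref{prop::bound-CRR} then reads simply as $c \leq ab/(a+b-1)$, and this is the only analytic input; the remaining work is purely algebraic.

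For the low threshold I would first record the elementary fact that the bounding factor never exceeds either of its arguments: since $a \geq 1$ we have $ab \leq a(a+b-1)$, hence $ab/(a+b-1) \leq a$, and by symmetry $ab/(a+b-1) \leq b$, so $ab/(a+b-1) \leq \min(a,b)$. Chaining this with Proposition \ref{prop::bound-CRR} yields $\CRR_{ED} \leq \min(\GRR_{EU},\GRR_{UD})$. This simultaneously establishes the first claim and delivers the auxiliary facts $a \geq c$ and $b \geq c$, which I will reuse.

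For the high threshold the decisive move is to recast the bounding-factor inequality in product form. Because $a+b-1 > 0$, the inequality $c \leq ab/(a+b-1)$ is equivalent to $c(a+b-1) \leq ab$, which rearranges exactly to $(a-c)(b-c) \geq c(c-1)$. Now I would invoke the low threshold: both $a-c \geq 0$ and $b-c \geq 0$. Taking $a = \max(a,b)$ without loss of generality, so that $a-c \geq b-c \geq 0$, gives $(a-c)^2 \geq (a-c)(b-c) \geq c(c-1)$, and extracting nonnegative square roots yields $a - c \geq \sqrt{c(c-1)}$, i.e. $\max(\GRR_{EU},\GRR_{UD}) \geq \CRR_{ED} + \sqrt{\CRR_{ED}(\CRR_{ED}-1)}$.

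The conceptual heart of the argument — and the only place needing care — is the passage to the factored inequality $(a-c)(b-c) \geq c(c-1)$ combined with the sign information that the low threshold forces \emph{both} factors to be nonnegative; without knowing $b - c \geq 0$ one could not bound the mixed product $(a-c)(b-c)$ below by the square of the larger factor, and the square-root step would fail. The degenerate case $c = 1$ (no confounding, as when $\GRR_{EU}=1$ or $\GRR_{UD}=1$) is harmless, since then $\sqrt{c(c-1)} = 0$ and both asserted inequalities collapse to $\min,\max \geq 1$, which hold trivially.
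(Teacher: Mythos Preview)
Your proof is correct and takes a genuinely different route from the paper's. The paper argues via the monotonicity lemma (Lemma~\ref{lemma::cornfield-increasing}): for the low threshold it sends one of $\GRR_{EU},\GRR_{UD}$ to infinity in the bounding factor and reads off the limit; for the high threshold it replaces both arguments by $\max(\GRR_{EU},\GRR_{UD})$ (again using monotonicity), obtaining the quadratic $\CRR_{ED}\leq \max^2/(2\max-1)$, which it then solves. Your argument is more direct and entirely algebraic: you show $ab/(a+b-1)\leq\min(a,b)$ by a one-line inequality, and for the high threshold you use the neat factorization $c(a+b-1)\leq ab \Longleftrightarrow (a-c)(b-c)\geq c(c-1)$, combined with the sign information $a,b\geq c$ from the first part. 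Your approach avoids limits and the auxiliary monotonicity lemma, and the factorization makes the structure of the high threshold transparent; the paper's approach, on the other hand, makes explicit that the Cornfield conditions are exactly what survives of the joint bound when one sensitivity parameter is sent to an extreme, which is conceptually informative. One small remark: your justification that $a,b\geq 1$ is slightly loose for $\GRR_{EU}$; the clean reason is that the $\GRR_{EU,k}$ have $F_0$-weighted average equal to $1$, so their maximum is at least $1$.
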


\begin{proof}[Proof of Proposition \ref{prop::cornfield-conditions}.]
According to Lemma \ref{lemma::cornfield-increasing}, the right-hand side of the last inequality in Proposition \ref{prop::bound-CRR} is increasing in both $\GRR_{UD}$ and $\GRR_{EU}$. Therefore, the right-hand side of the above inequality in Proposition \ref{prop::bound-CRR} will increase if we let $\GRR_{UD}$ or $\GRR_{EU}$ go to large extremes. 
Let $\GRR_{UD}\rightarrow \infty$, and we have 
$
\CRR_{ED} \leq \GRR_{EU}. 
$
Let $\GRR_{EU}\rightarrow \infty$, and we have 
$
\CRR_{ED} \leq \GRR_{UD}. 
$
Therefore, we have the following low threshold:
$
\min(\GRR_{UD}, \GRR_{EU}) \geq \CRR_{ED} .
$
We can obtain the following inequality by replacing $\GRR_{UD}$ and $ \GRR_{EU}$ in the bounding factor by their maximum value due to Lemma \ref{lemma::cornfield-increasing}:
$$
\CRR_{ED} \leq  \frac{   \max^2(\GRR_{UD}, \GRR_{EU})  }{    2\max(\GRR_{UD}, \GRR_{EU}) - 1 },
$$
solving $\max(\GRR_{UD}, \GRR_{EU}) $ from which we can obtain the following high threshold.
\end{proof}

\subsection{Preventive Exposures}

The bounding factor in Proposition \ref{prop::bound-CRR} is particularly useful for an apparently causative exposure with $\RR_{ED}^\obs>1$, and the true causal relative risk is an attenuation of $\RR_{ED}^\obs$ by the bounding factor. However, for apparently preventive exposure with $\RR_{ED}^\obs < 1$, we can derive equally useful bias formula. For apparently preventive exposure, we modify the definition of the relative risk between $E$ and $U$ as $\GRR_{EU} = \max_u  \GRR_{EU}^{-1} (u) = 1/\min_u \GRR_{EU}(u)$, and obtain the following analogous result.

\begin{proposition}
\label{prop::preventive}
For apparently preventive exposure, we have
$
 \SRR_{ED} /   \RR_{ED}^\obs   \leq \BF_U.
$
Or, equivalently, the true causal relative risk is an inflation of $\RR_{ED}^\obs$ by the bounding factor.
\end{proposition}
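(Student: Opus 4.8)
The plan is to reduce the preventive case to the causative case already settled in Proposition \ref{prop::bound-CRR} by relabeling the exposure. Introduce $\tilde E = 1 - E$, so that an apparently preventive exposure with $\RR_{ED}^\obs < 1$ becomes apparently causative with $\RR_{\tilde E D}^\obs = 1/\RR_{ED}^\obs > 1$. Under this relabeling the conditional distributions swap, $F_1 \leftrightarrow F_0$, so the Radon--Nikodym derivative becomes $\GRR_{\tilde E U}(u) = F_0(du)/F_1(du) = 1/\GRR_{EU}(u)$ and hence $\max_u \GRR_{\tilde E U}(u) = 1/\min_u \GRR_{EU}(u)$, which is exactly the modified definition of $\GRR_{EU}$ stipulated for preventive exposures. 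Likewise $r$ and $r^*$ swap, so $\GRR_{UD|E=1}$ and $\GRR_{UD|E=0}$ merely exchange roles and their maximum $\GRR_{UD}$, together with the resulting bounding factor $\BF_U$, is unchanged.

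First I would verify these correspondences carefully, so that Proposition \ref{prop::bound-CRR} applied to the relabeled problem reads $\CRR_{\tilde E D} = \RR_{\tilde E D}^\obs / \SRR_{\tilde E D} \leq \BF_U$. Since $\RR_{\tilde E D}^\obs = 1/\RR_{ED}^\obs$ and $\SRR_{\tilde E D} = 1/\SRR_{ED}$, the left-hand side equals $\SRR_{ED}/\RR_{ED}^\obs$, which is precisely the claimed inequality $\SRR_{ED}/\RR_{ED}^\obs \leq \BF_U$.

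Alternatively, one can mirror the direct argument of Proposition \ref{prop::bound-CRR} without relabeling, which is instructive for locating the one place the modified definition enters. Here one bounds the \emph{reciprocal} confounding relative risks $1/\CRR_{ED+}$ and $1/\CRR_{ED-}$ from above. Writing $1/\CRR_{ED+}$ through the same two-point decomposition over $\max_u r(u)$ and $\min_u r(u)$, the relevant weight ratio becomes $\Gamma' = w_0/w_1 = 1/\Gamma$. The step that must be re-examined is the bound on $\Gamma'$: since $\Gamma$ was shown to be a nonnegative-weight average of $\GRR_{EU}(u)$, it satisfies $\Gamma \geq \min_u \GRR_{EU}(u)$, so $\Gamma' \leq 1/\min_u \GRR_{EU}(u) = \GRR_{EU}$ under the preventive definition. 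From there Lemmas \ref{lemma::basic-factorial-derivative} and \ref{lemma::cornfield-increasing} give $1/\CRR_{ED+} \leq \BF_U$, and analogously $1/\CRR_{ED-} \leq \BF_U$.

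Finally, Proposition \ref{prop::average} expresses $1/\CRR_{ED}$ as the convex combination $w/\CRR_{ED+} + (1-w)/\CRR_{ED-}$; since both terms are now bounded by $\BF_U$, so is the average, giving $\SRR_{ED}/\RR_{ED}^\obs = 1/\CRR_{ED} \leq \BF_U$. The main obstacle, and the only genuinely new point relative to Proposition \ref{prop::bound-CRR}, is confirming that reversing the exposure (equivalently, inverting the weight ratio) turns the previously used upper bound $\Gamma \leq \max_u \GRR_{EU}(u)$ into the lower bound $\Gamma \geq \min_u \GRR_{EU}(u)$ that the modified definition of $\GRR_{EU}$ is designed to absorb; everything else is a verbatim replay of the earlier monotonicity arguments.
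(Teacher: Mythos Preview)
Your proposal is correct, and your primary relabeling argument via $\tilde E = 1 - E$ is essentially identical to the paper's own proof, which likewise defines $\bar E = 1 - E$, applies Proposition~\ref{prop::bound-CRR} to $\bar E$, and then uses $\RR_{\bar E D}^\obs = 1/\RR_{ED}^\obs$, $\SRR_{\bar E D} = 1/\SRR_{ED}$, and $\GRR_{\bar E U} = \max_u \GRR_{EU}^{-1}(u) = \GRR_{EU}$ under the modified definition. Your additional direct-mirror argument and the explicit check that $\GRR_{UD}$ is invariant under the swap $r \leftrightarrow r^*$ are correct elaborations that the paper leaves implicit.
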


\begin{proof}
[Proof of Proposition \ref{prop::preventive}.]
Define $\bar{E} = 1-E$, and the exposure $\bar{E}$ is apparently preventive for the outcome. Therefore, Proposition \ref{prop::bound-CRR} implies that
$$
\frac{  \RR_{\bar{E}D} }{  \SRR_{\bar{E}D}}  \leq  \frac{ \GRR_{\bar{E}U}\times \GRR_{UD} }{  \GRR_{\bar{E}U} +  \GRR_{UD}  - 1 }.
$$
Since $\RR_{\bar{E}D} = 1/\RR_{ED}^\obs , \SRR_{\bar{E}D} = 1 / \SRR_{ED}$, and $ \GRR_{\bar{E} U} = \max_u  \GRR_{EU}^{-1}(u)  = \GRR_{EU}$, the conclusion follows.
\end{proof}

\subsection{Averaged Over Observed Covariates}

All the results above are within strata of observed covariates $C$. The probabilities are conditional probabilities (e.g., $\pr(D=1\mid E=1, C=c), \pr\{  D(1) = 1\mid E=0, C=c \}, etc.$), the causal relative risks are conditional causal measures (e.g., $\SRD_{ED+} = \pr\{  D(1) = 1\mid E=1,  C=c \} / \pr\{  D(0) = 1\mid E=0, C=c \}, etc.$), and the bounding factor is also conditional denoted as $\BF_{U|c} = \GRR_{EU|c}\times \GRR_{UD|c} / ( \GRR_{EU|c} + \GRR_{UD|c} - 1 )$.

We have the following decomposition: 
\begin{eqnarray*}
\SRR_{ED} &=& \frac{     \int \pr(D=1\mid E=1, C=c, U=u)   F_{CU} (dcdu)      }{      \int \pr(D=1\mid E=0, C=c,  U=u)  F_{CU}(dcdu)   } \\
&=& \frac{  \int \int \pr(D=1\mid E=1, C=c, U=u) F_{U|C}(du)  F_C(dc)    }{    \int \int \pr(D=1\mid E=0, C=c, U=u) F_{U|C}(du)  F_C(dc)        } \\
&=& \frac{  \int  \pr\{  D(1)=1\mid C=c \}  F_C(dc)   }{  \int \pr\{  D(0)=1\mid C=c \}   F_C(dc)     } \\
&=& \frac{  \int  \SRR_{ED|c}   \pr\{  D(0)=1\mid C=c \}  F_C(dc)   }{  \int \pr\{  D(0)=1\mid C=c \}   F_C(dc)     } .
\end{eqnarray*}

Applying the result about conditional causal relative risk, we have
$$
\SRR_{ED} \geq 
\frac{  \int  \frac{  \RR_{ED|c}^\obs  }{ \BF_{U|c}} \pr\{  D(0)=1\mid C=c \}  F_C(dc)   }{  \int \pr\{  D(0)=1\mid C=c \}   F_C(dc)     }
 \geq \min_c \frac{  \RR_{ED|c}^\obs  }{ \BF_{U|c}}.
$$
If we assume a common causal relative risk $\SRR_{ED|c} = \SRR_{ED}$, then we can sharpen the result as:
$$
\SRR_{ED} \geq \max_c \frac{  \RR_{ED|c}^\obs  }{ \BF_{U|c}}.
$$

\section{Another Bounding Factor and Implied Cornfield Conditions Using the Odds Ratio}
\label{sec::another-bounds}

\subsection{Another Bounding Factor Using the Odds Ratio}
\label{sec::another}

Define $p(u) = \pr(E=1\mid U=u)$ as the probability of the exposure, and $q(u) = p(u)/\{ 1-p(u) \}$ as the odds of the exposure within level $u$ of the confounder $U$. Let $\GOR_{EU} = \max_u q(u) / \min_u q(u) $ be the ratio of the maximum and minimum of these odds. We use $\GOR_{EU} $ to measure the association between the confounder $U$ and the exposure $E$, which is defined as the maximal odds ratio between the exposure $E$ and the confounder $U$. When the confounder $U$ is binary, it reduces to the ordinary odds ratio. Using the odds ratio between the exposure $E$ and $U$ and the relative risk of the confounder $U$ on the outcome $D$ as the association measure as Bross and Lee \citep{Bross::1966, Bross::1967, Lee::2011}, we have the following bounding factor that ties $\CRR_{ED}$ with $\GOR_{EU} $ and $\GRR_{UD}$:

\begin{proposition}
\label{proposition::another}
We have
\begin{eqnarray}
\left(    \frac{   \sqrt{  \GOR_{EU} \GRR_{UD}  } + 1   }{   \sqrt{\GOR_{EU}}  + \sqrt{\GRR_{UD}} }  \right)^2 \geq  \frac{\RR_{ED}^\obs}{\SRR_{ED}} = \CRR_{ED}. 
\label{eq::bross-lee-whole}
\end{eqnarray}
\end{proposition}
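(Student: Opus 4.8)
The plan is to mirror the proof of Proposition \ref{prop::bound-CRR}: first bound the exposed and unexposed confounding relative risks $\CRR_{ED+}$ and $\CRR_{ED-}$ separately by the right-hand side of \eqref{eq::bross-lee-whole}, and then recombine them through the harmonic-average identity of Proposition \ref{prop::average} to obtain the bound on $\CRR_{ED}$. The genuinely new ingredient, relative to Proposition \ref{prop::bound-CRR}, is that the exposure--confounder association now enters through the odds $q(u)$ rather than directly through the Radon--Nikodym derivative $\GRR_{EU}(u)$, so the first move is a change of variables recasting each confounding relative risk in terms of $q$ and the single base measure $F_0$.

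Concretely, writing $f=\pr(E=1)$ and $\E_0$ for expectation under $F_0$, I would use $F_1(du)=\tfrac{1-f}{f}\,q(u)\,F_0(du)$, which follows from Bayes' rule since $F_1(du)\propto p(u)F(du)$ and $F_0(du)\propto\{1-p(u)\}F(du)$, together with the normalization $\tfrac{1-f}{f}\int q\,F_0(du)=1$, to rewrite
\begin{eqnarray*}
\CRR_{ED+} = \frac{\int r(u)\,F_1(du)}{\int r(u)\,F_0(du)} = \frac{\E_0[rq]}{\E_0[r]\,\E_0[q]}, \qquad \CRR_{ED-} = \frac{\E_0[r^*q]}{\E_0[r^*]\,\E_0[q]}.
\end{eqnarray*}
Each is a normalized ``correlation'' functional $\E[XY]/(\E[X]\E[Y])$ of a measure supported on a rectangle: for $\CRR_{ED+}$ the factor $r$ ranges over an interval with $\max r/\min r=\GRR_{UD|E=0}$ while $q$ ranges with $\max q/\min q=\GOR_{EU}$, and similarly for $\CRR_{ED-}$ with $\GRR_{UD|E=1}$. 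Since the functional is invariant under rescaling each coordinate, I may reduce to the purely analytic claim that, for any probability measure on $[1,b]\times[1,a]$, $\E[XY]/(\E[X]\E[Y])\le(\sqrt{ab}+1)^2/(\sqrt a+\sqrt b)^2$.

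The main obstacle is precisely this analytic claim, and I expect the cheap routes to fail: bounding $q$ and $r$ separately (as in Proposition \ref{prop::bound-CRR}, where one writes $\Gamma=w_1/w_0\le\GOR_{EU}$) only recovers the weaker factor $\GOR_{EU}\GRR_{UD}/(\GOR_{EU}+\GRR_{UD}-1)$, and even Cauchy--Schwarz on the covariance is too lossy, because the variance-maximizing rescalings of the two coordinates are mutually incompatible. Instead I would show the maximizer is the two-point law on the diagonal corners $(1,1)$ and $(b,a)$: replacing $Y$ by $\E[Y\mid X]$ leaves the functional unchanged and makes $Y$ a function of $X$; for a fixed law of $X$ the quantity $\E[XY]/\E[Y]$ is a weighted average of $X$ and is thus maximized by the bang-bang choice $Y\in\{1,a\}$; replacing $X$ by $\E[X\mid Y]$ then produces a genuine two-point law, and a sign check on the partial derivatives pushes its two atoms to $1$ and $b$. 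What remains is a one-variable optimization over the mass $t$ at $(b,a)$, whose stationarity condition reduces to $t^2(ab-1)+2t-1=0$, giving $t^\star=1/(\sqrt{ab}+1)$ and optimal value exactly $(\sqrt{ab}+1)^2/(\sqrt a+\sqrt b)^2$, which is the square of the function shown to be increasing in Lemma \ref{lemma::increasing}.

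Finally I would assemble the pieces. Because $(\sqrt{xy}+1)/(\sqrt x+\sqrt y)$ is increasing in $y$ by Lemma \ref{lemma::increasing}, replacing $\GRR_{UD|E=0}$ and $\GRR_{UD|E=1}$ by their maximum $\GRR_{UD}$ shows that both $\CRR_{ED+}$ and $\CRR_{ED-}$ are at most $R^\star:=\{(\sqrt{\GOR_{EU}\GRR_{UD}}+1)/(\sqrt{\GOR_{EU}}+\sqrt{\GRR_{UD}})\}^2$. The identity $1/\CRR_{ED}=w/\CRR_{ED+}+(1-w)/\CRR_{ED-}$ from Proposition \ref{prop::average} is then a convex combination of two quantities each at least $1/R^\star$, whence $\CRR_{ED}\le R^\star$, which is exactly \eqref{eq::bross-lee-whole}.
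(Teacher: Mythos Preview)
Your overall architecture matches the paper exactly: bound $\CRR_{ED+}$ and $\CRR_{ED-}$ separately by $\bigl(\sqrt{\GOR_{EU}\GRR_{UD|E=e}}+1\bigr)^2/\bigl(\sqrt{\GOR_{EU}}+\sqrt{\GRR_{UD|E=e}}\bigr)^2$, then invoke Lemma~\ref{lemma::increasing} to replace $\GRR_{UD|E=e}$ by $\GRR_{UD}$, and finally use the harmonic-average identity of Proposition~\ref{prop::average} to pass to $\CRR_{ED}$. The only difference is in how the two separate bounds are obtained. The paper simply cites Lee~\cite{Lee::2011} for the inequalities in \eqref{eq::bross-lee-sub} and moves on; you instead supply a self-contained derivation by rewriting $\CRR_{ED+}=\E_0[rq]/(\E_0[r]\E_0[q])$ via the change of measure $F_1(du)=\tfrac{1-f}{f}q(u)F_0(du)$ and then maximizing the scale-invariant functional $\E[XY]/(\E[X]\E[Y])$ over probability measures on $[1,b]\times[1,a]$ through a conditioning/bang-bang/two-point reduction.

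Your optimization argument is correct: the quasiconvexity of $g\mapsto\E[Xg]/\E[g]$ gives the bang-bang step, the second conditioning collapses to a genuine two-point law, the sign checks on $\partial/\partial x_1$ and $\partial/\partial x_2$ push the atoms to the corners, and the one-variable calculation at $t^\star=1/(\sqrt{ab}+1)$ recovers exactly $(\sqrt{ab}+1)^2/(\sqrt a+\sqrt b)^2$. So you have effectively reproved Lee's lemma, which the paper takes as a black box. The trade-off is clear: the paper's proof is three lines because the hard work is outsourced, while yours is longer but stands on its own without external references.
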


\begin{proof}[Proof of Proposition \ref{proposition::another}.]
Lee \cite{Lee::2011} obtained the following results:
\begin{eqnarray}
\CRR^{+}_{ED} \leq   \left(  \frac{  \sqrt{ \GOR_{EU} \GRR_{UD|E=0}} + 1}{ \sqrt{\GOR_{EU}}  + \sqrt{\GRR_{UD|E=0}} } \right)^2, 
\quad 
\CRR^{-}_{ED}  \leq    \left(   \frac{  \sqrt{\GOR_{EU} \GRR_{UD|E=1}} + 1}{ \sqrt{\GOR_{EU}}  + \sqrt{\GRR_{UD|E=1}} } \right)^2 ,
\label{eq::bross-lee-sub}
\end{eqnarray}
Since $\GRR_{UD} = \max(\GRR_{UD|E=0}, \GRR_{UD|E=1})$, Lemma \ref{lemma::increasing} implies that
$$
\CRR^{+}_{ED} \leq  \left(  \frac{  \sqrt{ \GOR_{EU} \GRR_{UD}} + 1}{ \sqrt{\GOR_{EU}}  + \sqrt{\GRR_{UD}} } \right)^2,
\quad 
\CRR^{-}_{ED} \leq   \left(   \frac{  \sqrt{\GOR_{EU} \GRR_{UD}} + 1}{ \sqrt{\GOR_{EU}}  + \sqrt{\GRR_{UD}} } \right)^2 ,
$$
which leads to
$$
\frac{1}{\CRR_{ED}}   = \frac{w}{\CRR^{+}_{ED}} + \frac{1-w}{\CRR^{-}_{ED}} \geq   
 \left(   \frac{ \sqrt{\GOR_{EU}}  + \sqrt{\GRR_{UD}} } {  \sqrt{\GOR_{EU} \GRR_{UD}} + 1} \right)^2 ,
$$
and the conclusion follows.
\end{proof}

\subsection{Implied Cornfield Conditions}

The bounding factor in the last subsection implies the following Cornfield conditions:

\begin{proposition}
\label{proposition::another-corn}
We have
$$
\min(\GOR_{EU} , \GRR_{UD}) \geq \CRR_{ED} , \quad 
\max(\GOR_{EU}, \GRR_{UD})  \geq  \left(   \sqrt{  \CRR_{ED}  }  + \sqrt{  \CRR_{ED} - 1 }  \right)^2.
$$
\end{proposition}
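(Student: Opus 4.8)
The plan is to mirror the proof of Proposition~\ref{prop::cornfield-conditions}, replacing Lemmas~\ref{lemma::basic-factorial-derivative}--\ref{lemma::cornfield-increasing} by Lemma~\ref{lemma::increasing}, which guarantees that $h(x,y) = (\sqrt{xy}+1)/(\sqrt{x}+\sqrt{y})$ is increasing in each of $x,y$ for $x,y>1$. The starting point is Proposition~\ref{proposition::another}, which gives $\CRR_{ED} \leq h(\GOR_{EU}, \GRR_{UD})^2$, and both thresholds will be extracted from this single inequality by monotonicity arguments, so no new analytic input beyond Lemma~\ref{lemma::increasing} is needed.

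For the low threshold I would push one argument to infinity. Since $h^2$ is increasing in its second argument, $h(\GOR_{EU}, \GRR_{UD})^2 \leq \lim_{\GRR_{UD}\to\infty} h(\GOR_{EU}, \GRR_{UD})^2$. Dividing the numerator and denominator of $h$ by $\sqrt{\GRR_{UD}}$ shows this limit equals $\GOR_{EU}$, so $\CRR_{ED}\leq \GOR_{EU}$; letting $\GOR_{EU}\to\infty$ instead yields $\CRR_{ED}\leq\GRR_{UD}$ by the same computation. Together these give $\min(\GOR_{EU},\GRR_{UD}) \geq \CRR_{ED}$.

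For the high threshold I would replace both arguments by $M := \max(\GOR_{EU}, \GRR_{UD})$. Monotonicity in each argument gives $h(\GOR_{EU}, \GRR_{UD})^2 \leq h(M,M)^2 = \{(M+1)/(2\sqrt{M})\}^2 = (M+1)^2/(4M)$, hence $\CRR_{ED} \leq (M+1)^2/(4M)$. It remains to invert this. The map $g(M) = (M+1)^2/(4M)$ satisfies $g'(M) = (1-1/M^2)/4 > 0$ for $M>1$, so $g$ is strictly increasing there, and solving $g(M) = \CRR_{ED}$ gives the quadratic $M^2 - (4\CRR_{ED}-2)M + 1 = 0$ with roots $(2\CRR_{ED}-1) \pm 2\sqrt{\CRR_{ED}(\CRR_{ED}-1)}$. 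Because $M \geq \min(\GOR_{EU},\GRR_{UD}) \geq \CRR_{ED} > 1$ while the smaller root lies below $1$, the relevant bound is the larger root, so $M \geq (2\CRR_{ED}-1) + 2\sqrt{\CRR_{ED}(\CRR_{ED}-1)}$.

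The main obstacle---really the only nonroutine step---is recognizing that this larger root is exactly $(\sqrt{\CRR_{ED}} + \sqrt{\CRR_{ED}-1})^2$, which follows from the identity $(\sqrt{\CRR_{ED}}+\sqrt{\CRR_{ED}-1})^2 = 2\CRR_{ED}-1 + 2\sqrt{\CRR_{ED}(\CRR_{ED}-1)}$; one then also has to confirm that the correct root is selected, which is where the bound $\CRR_{ED} > 1$ (equivalently $\BF_U \geq 1$, so that there is genuine confounding) is used to discard the smaller root.
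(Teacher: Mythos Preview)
Your proposal is correct and follows essentially the same approach as the paper: both start from Proposition~\ref{proposition::another}, use the monotonicity in Lemma~\ref{lemma::increasing} to send one argument to infinity for the low threshold and to replace both arguments by the maximum for the high threshold, arriving at $\CRR_{ED} \leq (M+1)^2/(4M)$. The only cosmetic difference is in the final inversion: the paper takes a square root first and solves the resulting quadratic in $\sqrt{M}$, obtaining $\sqrt{M}\geq \sqrt{\CRR_{ED}}+\sqrt{\CRR_{ED}-1}$ directly, whereas you solve the quadratic in $M$ and then verify the algebraic identity---both routes are equivalent. One small remark: your parenthetical ``equivalently $\BF_U\geq 1$'' is not quite the right justification for $\CRR_{ED}>1$; rather, $\CRR_{ED}\geq 1$ is the standing hypothesis of the apparently causative setting (and is needed for $\sqrt{\CRR_{ED}-1}$ to make sense), while $\BF_U\geq 1$ holds automatically.
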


\begin{proof}[Proof of Proposition \ref{proposition::another-corn}.]
According to Lemma \ref{lemma::increasing}, we can let $\GRR_{ED}$ goes to infinity, and obtain
$
\GOR_{EU} \geq  \CRR_{ED}.
$
Similarly, we can let $\GOR_{EU}$ goes to infinity, and obtain
$
\GRR_{UD} \geq \CRR_{ED}.
$
Combining them together, we have the following low threshold:
$
\min(\GOR_{EU} , \GRR_{UD}) \geq \CRR_{ED}.
$
According to Lemma \ref{lemma::increasing} again, we can replace $\GOR_{EU} $ and $ \GRR_{UD}$ by $\max(\GOR_{EU} , \GRR_{UD})$ in the bounding factor in Section \ref{sec::another} , and preserve the inequality as follows:
$$
\left(    \frac{   \max (\GOR_{EU},  \GRR_{UD} ) +1   }{   2 \sqrt{  \max(\GOR_{EU}, \GRR_{UD}) }   }  \right)^2 \geq  \CRR_{ED}.
$$
Solving the above inequality, we obtain
$
 \sqrt{  \max(\GOR_{EU}, \GRR_{UD}) } \geq
 \sqrt{  \CRR_{ED}  }  + \sqrt{  \CRR_{ED} - 1 }, 
$
and the high threshold follows.
\end{proof}

Propositions \ref{proposition::another} and \ref{proposition::another-corn} generalize the results of Bross \citep{Bross::1966, Bross::1967} and Lee \citep{Lee::2011} from only being applicable under the null hypothesis of no effect (i.e., only being useful for assessing how much unmeasured confounding would suffice to completely explain away an effect estimate) to alternative hypotheses and sensitivity analysis.

\subsection{Preventive Exposure}\label{sec::preventive-exposure}

For apparently preventive exposure with $\RR_{ED}^\obs < 1$, we can derive bias formula similar to Proposition \ref{proposition::another}, and we don't even need to modify the definition of $\GOR_{EU}$.

\begin{proposition}
\label{prop::preventive-another}
For apparently preventive exposure, we have
$$
\frac{  \SRR_{ED}   }{ \RR_{ED}^\obs } \leq   \left(    \frac{   \sqrt{  \GOR_{EU} \GRR_{UD}  } + 1   }{   \sqrt{\GOR_{EU}}  + \sqrt{\GRR_{UD}} }  \right)^2.
$$
\end{proposition}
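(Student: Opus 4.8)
The plan is to reuse the exposure-flipping device from the proof of Proposition~\ref{prop::preventive}, now applied to the odds-ratio bounding factor of Proposition~\ref{proposition::another}. First I would introduce the relabelled exposure $\bar E = 1 - E$. Since the exposure is apparently preventive, $\RR_{ED}^\obs < 1$, the flipped exposure $\bar E$ is apparently causative: we have $\RR_{\bar E D}^\obs = 1/\RR_{ED}^\obs > 1$ and $\SRR_{\bar E D} = 1/\SRR_{ED}$, so that
\[
\CRR_{\bar E D} = \frac{\RR_{\bar E D}^\obs}{\SRR_{\bar E D}} = \frac{\SRR_{ED}}{\RR_{ED}^\obs},
\]
which is precisely the quantity appearing on the left-hand side of the claimed inequality.

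Next I would apply Proposition~\ref{proposition::another} verbatim with $\bar E$ in place of $E$ (legitimate because $\bar E$ is causative), which gives
\[
\CRR_{\bar E D} \leq \left( \frac{\sqrt{\GOR_{\bar E U}\,\GRR_{UD}}+1}{\sqrt{\GOR_{\bar E U}}+\sqrt{\GRR_{UD}}}\right)^2.
\]
It then remains to identify the two sensitivity parameters attached to $\bar E$ with those attached to $E$. The confounder-outcome parameter $\GRR_{UD} = \max(\GRR_{UD|E=0},\GRR_{UD|E=1})$ is unaffected, because relabelling the exposure merely interchanges the strata $E=0$ and $E=1$, and hence swaps $\GRR_{UD|E=0}$ and $\GRR_{UD|E=1}$ without changing their maximum.

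The crux is to show $\GOR_{\bar E U} = \GOR_{EU}$, which is exactly what frees us from having to redefine the exposure-confounder parameter (in contrast to the relative-risk case in Proposition~\ref{prop::preventive}, where $\GRR_{EU}$ had to be replaced by $\max_u \GRR_{EU}^{-1}(u)$). Writing $p(u) = \pr(E=1\mid U=u)$ and $q(u) = p(u)/\{1-p(u)\}$, flipping the exposure replaces the odds of $\bar E$ by $\pr(\bar E = 1\mid U=u)/\pr(\bar E=0\mid U=u) = \{1-p(u)\}/p(u) = 1/q(u)$. The odds ratio is invariant under this reciprocation, since
\[
\GOR_{\bar E U} = \frac{\max_u 1/q(u)}{\min_u 1/q(u)} = \frac{1/\min_u q(u)}{1/\max_u q(u)} = \frac{\max_u q(u)}{\min_u q(u)} = \GOR_{EU}.
\]
Substituting $\GOR_{\bar E U}=\GOR_{EU}$ and $\CRR_{\bar E D} = \SRR_{ED}/\RR_{ED}^\obs$ into the displayed bound yields the claim.

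The only genuine obstacle is the odds-ratio invariance in the last display; everything else is bookkeeping inherited from the causative case. I would state it carefully and emphasize that it is precisely this symmetry of the odds ratio under exposure relabelling---a symmetry that the relative risk lacks---that explains why no modification of $\GOR_{EU}$ is required here.
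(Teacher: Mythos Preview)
Your proposal is correct and follows essentially the same route as the paper's own proof: flip the exposure to $\bar E=1-E$, apply Proposition~\ref{proposition::another} to the causative $\bar E$, use $\RR_{\bar E D}^\obs=1/\RR_{ED}^\obs$ and $\SRR_{\bar E D}=1/\SRR_{ED}$, and verify $\GOR_{\bar E U}=\GOR_{EU}$ via the reciprocation identity $\max_u(1/q(u))/\min_u(1/q(u))=\max_u q(u)/\min_u q(u)$. Your added remarks on the invariance of $\GRR_{UD}$ under relabelling and the contrast with the relative-risk case are helpful commentary but not needed for the argument.
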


\begin{proof}
[Proof of Proposition \ref{prop::preventive-another}.]
Define $\bar{E} = 1-E$. Applying Proposition \ref{proposition::another}, we have
$$
\frac{ \RR_{\bar{E}D} } {  \SRR_{\bar{E}D}   } \leq   \left(    \frac{   \sqrt{  \GOR_{\bar{E}U} \GRR_{UD}  } + 1   }{   \sqrt{\GOR_{\bar{E}U}}  + \sqrt{\GRR_{UD}} }  \right)^2.
$$
Since $\RR_{\bar{E}D} = 1/\RR_{ED}^\obs , \SRR_{\bar{E}D} = 1 / \SRR_{ED}$, and 
$$
\GOR_{\bar{E}U} = \frac{ \max_u 1/q(u)  }{ \min_u 1/q(u)}   = \frac{1/\min_u q(u)}{1/\max_u q(u)} = \frac{\max_u q(u)}{ \min_u q(u)} = \GOR_{EU},
$$
the conclusion follows.
\end{proof}

\section{Relations with Existing Results}

\subsection{Schlesselman's Formula}

For a binary confounder $U$, Schlesselman \cite{Schlesselman::1978} first obtained that
$$
\frac{\RR_{ED}^\obs}{\SRR_{ED-}}= \frac{1+(\RR_{UD|E=1} -1)\pr(U = 1 \mid E = 1)} {1+(\RR_{UD|E=0} - 1)\pr(U = 1 \mid E = 0)}.
$$
He further assumed a common relative risk of the exposure $E$ on the outcome $D$ within both $U=0$ and $U=1$, and also a common relative risk of the confounder $U$ on the outcome $D$ within both $E=0$ and $E=1$, denoted by $\gamma$. 
Under the above no-interaction assumption, Schlesselman simplified the above identity to the following formula:
\begin{eqnarray*}
\frac{ \RR_{ED}^\obs }{ \SRR_{ED} } = \frac{1 + (\gamma - 1) \pr(U=1\mid E=1)}{ 1 + (\gamma - 1) \pr(U=1\mid E=0)}.
\end{eqnarray*}
We can write $ \pr(U=1\mid E=0) =  \pr(U=1\mid E=1) / \RR_{EU}$ and then maximize the right-hand side of the above formula over $ \pr(U=1\mid E=1)$, which gives us the following inequality:
$$
\frac{ \RR_{ED}^\obs }{ \SRR_{ED} }  \leq \frac{ \RR_{EU}\times \gamma  }{\RR_{EU} + \gamma - 1}.
$$
The inequality above is the same as our main result in the main text, but is derived under unnecessary assumptions. Our result is much more general than the previous result obtained by Schlesselman \cite{Schlesselman::1978}, and his assumptions are not necessary for deriving our new bounding factor.

\subsection{Flanders and Khoury's results}

Flanders and Khoury \citep{Flanders::1990} used slightly different notation for categorical confounder $U$:
\begin{eqnarray*}
p_k &=& \pr(U=k\mid E=0),\\
\OR_k&=& \frac{  \pr(U=k\mid E=1) / \pr(U=0\mid E=1)     }{ \pr(U=k\mid E=0) / \pr(U=0\mid E=0)    } ,\\
\RR_k&=& \frac{ \pr(D=1\mid U=k, E=0) }{ \pr(D=1\mid U=0, E=0) }.
\end{eqnarray*}
They expressed the confounding relative risk for the exposed population as
$$
\CRR_{ED+} = 
\frac{  \sum_k   \RR_k \OR_k p_k   }
{  \left(   \sum_k    \OR_k p_k  \right)  \left(  \sum_k   \RR_k  p_k   \right)   }.
$$
The above sensitivity analysis formula depends on a large number of sensitivity parameters, and requires specifying the prevalence of the unmeasured confounder among unexposed population. Flanders and Khoury simplified it for binary confounder. However, for general categorical confounder, they derived the following bounds on the confounding relative risk:
$$
\CRR_{ED+}  \leq 
\min
\left\{
\frac{\max_k \OR_k }{ \sum_k \OR_k p_k },
\frac{\max_k \RR_k }{ \sum_k \RR_k p_k },
\max_k \OR_k, \max_k \RR_k,
{ 1\over p_{k^*}} , { 1\over p_{k^{**}} }
\right\},
$$
where $k^*$ and $k^{**}$ are the strata corresponding to the largest $\OR_k$ and $\RR_k$, respectively.
The upper bound depends on the prevalence of $U$. If we do not have any knowledge about the number of categories or the prevalence of $U$, the above bound reduces to
$$
\CRR_{ED+}  \leq 
\min
\left\{
\max_k \OR_k, \max_k \RR_k
\right\},
$$
which is essentially the low threshold Cornfield condition.

\section{Results for the Risk Difference Using Sensitivity Parameters on the Relative Risk Scale}
\label{sec::rd-rr}

\subsection{Lower Bounds for the Causal Risk Differences}

Define $p_1 = \pr(D=1\mid E=1)$ and $ p_0 = \pr(D=1\mid E=0)$ as the probabilities of the outcome with and without exposure, and $f = \pr(E=1)$ as the prevalence of the exposure. The causal risk differences for the exposed, unexposed and the whole population are defined as
\begin{eqnarray*}
\SRD_{ED+} &=& \pr\{  D(1)  =1 \mid E=1 \} - \pr\{  D(0) = 1 \mid E=1\} 
= p_1 - \pr\{  D(0) = 1 \mid E=1\} ,\\
\SRD_{ED-} &=& \pr\{  D(1)  =1 \mid E=0 \} - \pr\{  D(0) = 1 \mid E=0 \} 
=\pr\{  D(1)  =1 \mid E=0 \} - p_0,\\
\SRD_{ED} &=&  \pr\{  D(1)  =1\} - \pr\{  D(0) = 1\}.
\end{eqnarray*}
If $U$ suffices to control the confounding between the exposure and the outcome, then the following standardized risk differences are the causal risk differences for the exposed, unexposed and the whole population:
\begin{eqnarray*}
\SRD_{ED+} =
 p_1 -  \int r(u) F_1(du),\quad 
\SRD_{ED-} =
\int r^*(u) F_0(du) - p_0,\quad 
\SRD_{ED} =  
\int \{r^*(u) - r(u)\}F(du).
\end{eqnarray*}

\begin{proposition}
\label{prop::RD-biasfactor}
The lower bounds for the causal risk differences are
\begin{eqnarray*}
&&\SRD_{ED+}  \geq   p_1 - p_0\times \BF_U,\\
&&\SRD_{ED-} \geq  p_1/\BF_U - p_0, \\
&&\SRD_{ED} \geq   (     p_1    - p_0\times \BF_U )  \times  \left\{    f  + (1-f)/\BF_U  \right\}
=(    p_1 / \BF_U   - p_0)  \times  \left\{    f\times \BF_U + (1-f)  \right\} .
\end{eqnarray*}
\end{proposition}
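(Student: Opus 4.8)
The plan is to deduce all three bounds directly from Proposition~\ref{prop::bound-CRR}, which already controls the confounding relative risks $\CRR_{ED+}$, $\CRR_{ED-}$, and $\CRR_{ED}$ by the bounding factor $\BF_U$. The key observation is that the standardized probabilities appearing inside the risk-difference definitions are exactly the numerators or denominators of those confounding relative risks, so the additive-scale bounds will follow from the multiplicative ones after a single rearrangement. Throughout I would use that $\RR_{ED}^\obs = p_1/p_0$, together with the identities $\CRR_{ED+} = \int r(u) F_1(du) / p_0$ and $\CRR_{ED-} = p_1 / \int r^*(u) F_0(du)$ read off from the definitions in Section~\ref{sec::bias-factor}.

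First I would treat the exposed and unexposed subpopulations. Since $\SRD_{ED+} = p_1 - \int r(u) F_1(du)$ and $\CRR_{ED+} = \int r(u) F_1(du)/p_0 \leq \BF_U$, the subtracted term satisfies $\int r(u) F_1(du) \leq p_0 \BF_U$, and hence $\SRD_{ED+} \geq p_1 - p_0 \BF_U$. Symmetrically, since $\SRD_{ED-} = \int r^*(u) F_0(du) - p_0$ and $\CRR_{ED-} = p_1 / \int r^*(u) F_0(du) \leq \BF_U$, the added term satisfies $\int r^*(u) F_0(du) \geq p_1/\BF_U$, giving $\SRD_{ED-} \geq p_1/\BF_U - p_0$. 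The only care needed is that the two substitutions push in opposite directions---one caps a term that is subtracted, the other floors a term that is added---so I would track which inequality sign each step preserves.

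For the whole population I would record the mixture identity $F(du) = f F_1(du) + (1-f) F_0(du)$, which yields the decomposition $\SRD_{ED} = f \SRD_{ED+} + (1-f)\SRD_{ED-}$. Substituting the two subpopulation bounds gives $\SRD_{ED} \geq f(p_1 - p_0\BF_U) + (1-f)(p_1/\BF_U - p_0)$, and the remainder is purely algebraic: factoring $p_1 - p_0\BF_U$ out yields $(p_1 - p_0\BF_U)\{ f + (1-f)/\BF_U \}$, factoring $p_1/\BF_U - p_0$ out instead yields $(p_1/\BF_U - p_0)\{ f\BF_U + (1-f) \}$, and expanding both confirms they coincide. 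I do not expect a genuine obstacle: all analytic content is carried by Proposition~\ref{prop::bound-CRR}, and the only subtlety is the bookkeeping of matching each standardized integral to its confounding relative risk and recognizing that one lower bound admits the two equivalent factored forms stated in the proposition.
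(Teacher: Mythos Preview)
Your proposal is correct and follows essentially the same route as the paper: you identify the counterfactual integrals $\int r(u)F_1(du)$ and $\int r^*(u)F_0(du)$ as $p_0\,\CRR_{ED+}$ and $p_1/\CRR_{ED-}$, apply Proposition~\ref{prop::bound-CRR} to bound them, and then combine via $\SRD_{ED} = f\,\SRD_{ED+} + (1-f)\,\SRD_{ED-}$, exactly as in the paper's proof. The only cosmetic difference is that the paper frames the two integrals as the unidentified counterfactual probabilities $\pr\{D(0)=1\mid E=1\}$ and $\pr\{D(1)=1\mid E=0\}$ before invoking the confounding relative risks.
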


\begin{proof}[Proof of Proposition \ref{prop::RD-biasfactor}.]
From the data, we can identify:
\begin{eqnarray*}
p_1 &=&  \int \pr(D=1\mid E=1, U=u) F_1(du) = \int r^*(u) F_1(du)  ,\\
p_0 &=&  \int \pr(D=1\mid E=0, U=u) F_0(du) = \int r(u) F_0(du). 
\end{eqnarray*}
However, the following two counterfactual probabilities are not identifiable:
\begin{eqnarray*}
\pr\{  D(1)=1\mid E=0 \} &=& \int \pr(D=1\mid E=1, U=u) F_0(du) = \int r^*(u) F_0(du), \\
\pr\{  D(0)=1\mid E=1 \} &=& \int \pr(D=1\mid E=0, U=u) F_1(du) = \int r(u) F_1(du) .
\end{eqnarray*} 

First, we have
\begin{eqnarray*}
\frac{ p_1  }{    \pr\{  D(1)=1\mid E=0 \}      }  
= \frac{    \int r^*(u) F_1(du)    }{     \int r^*(u) F_0(du)      }  
= \CRR_{ED-} \leq \BF_U
\end{eqnarray*}
according to Proposition \ref{prop::bound-CRR},
and thus $   \pr\{  D(1)=1\mid E=0 \}    \geq   p_1 / \BF_U$. 
Second, we have
\begin{eqnarray*}
\frac{ \pr\{  D(0)=1\mid E=1 \}      }{  p_0   } 
= \frac{  \int r(u) F_1(du)       }{  \int r(u) F_0(du) } 
= \CRR_{ED+} \leq \BF_U 
\end{eqnarray*}
according to Proposition \ref{prop::bound-CRR},
and thus $ \pr\{  D(0)=1\mid E=1 \}   \leq  p_0 \times \BF_U$.
Therefore, the lower bound for $\SRD_{ED+}$ is
$
\SRD_{ED+} \geq p_1 - p_0\times \BF_U,
$
and for $\SRD_{ED-}$ is
$
\SRD_{ED-} \geq p_1/\BF_U - p_0. 
$
We can obtain the lower bound for $\SRD_{ED}$ using $\SRD_{ED} = f \SRD_{ED+} + (1-f) \SRD_{ED-}$.
\end{proof}

If the probability of $E=1$, $f$, is unknown, the above result about $\SRD_{ED}$ is not directly useful. In the following, we obtain a lower bound for $\SRD_{ED}$ based on $\SRD_{ED} = f \SRD_{ED+} + (1-f) \SRD_{ED-}$, which does not depend on $f.$

\begin{proposition}
\label{prop::unknown-f}
We have 
$
\SRD_{ED} \geq \min(   
 p_1 - p_0\times \BF_U, 
   p_1/\BF_U - p_0
).
$
When  $p_1 > p_0$ and $1\leq \BF_U \leq \RR_{ED}^\obs$, the above lower bound reduces to
$
\SRD_{ED} \geq  p_1 - p_0\times \BF_U.
$
\end{proposition}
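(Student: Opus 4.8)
The plan rests on the decomposition $\SRD_{ED} = f\,\SRD_{ED+} + (1-f)\,\SRD_{ED-}$ established in the proof of Proposition~\ref{prop::RD-biasfactor}, together with the two one-sided lower bounds from that same proposition. First I would observe that a convex combination of two numbers never falls below their minimum, so for every admissible prevalence $f\in[0,1]$,
\[
\SRD_{ED} = f\,\SRD_{ED+} + (1-f)\,\SRD_{ED-} \ge \min(\SRD_{ED+},\,\SRD_{ED-});
\]
this is the step that discharges the unknown $f$. Feeding in $\SRD_{ED+}\ge p_1 - p_0\,\BF_U$ and $\SRD_{ED-}\ge p_1/\BF_U - p_0$ from Proposition~\ref{prop::RD-biasfactor}, and using that $\min$ is monotone in each of its arguments, yields the first assertion $\SRD_{ED}\ge\min(p_1-p_0\,\BF_U,\;p_1/\BF_U-p_0)$.

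For the reduction I would compare the two branches head to head. The useful observation is the identity
\[
p_1 - p_0\,\BF_U = \BF_U\left(\frac{p_1}{\BF_U} - p_0\right),
\]
which exhibits the two candidate bounds as positive multiples of one another, the factor being $\BF_U\ge 1$; hence their ordering is decided solely by the sign of the common factor $p_1/\BF_U - p_0$, which is itself governed by comparing $\BF_U$ with $\RR_{ED}^\obs = p_1/p_0$. The hypotheses $p_1>p_0$ and $1\le\BF_U\le\RR_{ED}^\obs$ are exactly what is needed to fix this comparison — note that $1\le\BF_U\le\RR_{ED}^\obs$ is equivalent to $p_1-p_0\,\BF_U\ge 0$, so the exposed-population bound stays nonnegative — and I would use them to collapse the minimum to the single expression $p_1 - p_0\,\BF_U$ named in the proposition.

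The main obstacle, and the step demanding real care, is precisely this sign bookkeeping in the branch comparison: the two bounds coincide only at the endpoint $\BF_U=\RR_{ED}^\obs$, and away from it one must get the direction of the inequality between $p_1-p_0\,\BF_U$ and $p_1/\BF_U-p_0$ exactly right, since the reverse direction would select the other branch. Once the ordering is settled under the stated constraints, the reduction is immediate, and everything else is a direct consequence of Proposition~\ref{prop::RD-biasfactor} and the convexity of the prevalence-weighted average.
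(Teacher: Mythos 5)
Your first paragraph is correct and is precisely the argument the paper relies on (the paper states this proposition without a separate proof): since $f\in[0,1]$, the decomposition $\SRD_{ED}=f\,\SRD_{ED+}+(1-f)\,\SRD_{ED-}$ gives $\SRD_{ED}\geq\min(\SRD_{ED+},\SRD_{ED-})$, and monotonicity of the minimum lets you substitute the two lower bounds from Proposition \ref{prop::RD-biasfactor}.

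The second paragraph, however, resolves the branch comparison in the wrong direction, and this is a genuine error, not a presentational slip. Write $a=p_1-p_0\,\BF_U$ and $b=p_1/\BF_U-p_0$. Your identity $a=\BF_U\,b$ is right, and so is your observation that the ordering of $a$ and $b$ is decided by the sign of $b$; but following it through, the hypothesis $1\leq\BF_U\leq\RR_{ED}^\obs=p_1/p_0$ forces $b\geq0$, hence $a=\BF_U\,b\geq b$, so the minimum is the \emph{unexposed} bound: $\min(a,b)=b=p_1/\BF_U-p_0$, not $a=p_1-p_0\,\BF_U$ as you conclude. (Also, $a-b=(\BF_U-1)\,b$, so the two bounds coincide at both endpoints $\BF_U=1$ and $\BF_U=\RR_{ED}^\obs$, not only at the right endpoint as you state.) The stronger claim $\SRD_{ED}\geq a$ cannot be rescued by another argument: since the underlying relative-risk bounds are sharp, $\SRD_{ED-}$ can attain $b$, and with $f$ near $0$ one then has $\SRD_{ED}$ near $b<a$, so the inequality $\SRD_{ED}\geq p_1-p_0\,\BF_U$ is simply false in general under the stated hypotheses. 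You should be aware that the proposition's second sentence as printed in the paper contains the same transposition; it is contradicted by the paper's own numerical example in Appendix 2, where $p_1=0.25$, $p_0=0.1$, $\BF_U=1.33$ satisfy the hypotheses and the paper correctly takes $\min(0.12,0.09)=0.09=p_1/\BF_U-p_0$. The statement is correct either with the conclusion replaced by $\SRD_{ED}\geq p_1/\BF_U-p_0$ under the stated condition $\BF_U\leq\RR_{ED}^\obs$, or with the condition reversed to $\BF_U\geq\RR_{ED}^\obs$ (in which case $b\leq0$, $a=\BF_U\,b\leq b$, and the minimum is indeed $a$, though then both bounds are nonpositive and uninformative). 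A correct proof must select the $p_1/\BF_U-p_0$ branch; your writeup, by collapsing the minimum to the other branch in order to match the printed statement, asserts a false inequality at exactly the step you yourself flagged as the one demanding care.
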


The above results are particularly useful for an apparently causative exposure with $\RD_{ED}^\obs > 0$, which give (possibly positive) lower bounds for the causal risk differences. However, for an apparently preventive exposure with $\RD_{ED}^\obs < 0$, we need to modify the definition of $\GRR_{EU}$ as $\GRR_{EU} = \max_u \GRR^{-1}_{EU}(u)$. And we have the following analogous results.

\begin{proposition}
\label{prop::rd-preventive}
For apparently preventive exposure with $\RD_{ED}^\obs < 0$, we have
\begin{eqnarray*}
&&\SRD_{ E D+}  \leq   p_1\times \BF_U - p_0 ,\\
&&\SRD_{ E D-} \leq  p_1  - p_0/\BF_U, \\
&&\SRD_{ E D} \leq  (   p_1\times \BF_U - p_0)  \times  \left\{    f  + (1-f)/\BF_U  \right\}
=(    p_1  - p_0/\BF_U  ) \times  \left\{    f\times \BF_U + (1-f)  \right\} .
\end{eqnarray*}
When $f$ is unknown and $1\leq \BF_U\leq 1/\RR_{ED}^\obs$, we have
$
\SRR_{ED} \leq  p_1  - p_0/\BF_U.
$
\end{proposition}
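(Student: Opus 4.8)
The plan is to avoid any fresh analysis and instead reduce the preventive case to the causative case already settled in Proposition \ref{prop::RD-biasfactor}, using the same device as in the proofs of Propositions \ref{prop::preventive} and \ref{prop::preventive-another}. I would set $\bar{E}=1-E$. Since the exposure is apparently preventive, $p_1<p_0$, so $\RD_{\bar{E}D}^\obs=\pr(D=1\mid\bar{E}=1)-\pr(D=1\mid\bar{E}=0)=p_0-p_1>0$ and $\bar{E}$ is apparently causative. Hence Proposition \ref{prop::RD-biasfactor} applies verbatim to $\bar{E}$, and the entire task becomes one of translating its conclusions back through the relabeling.

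First I would set up the dictionary induced by the involution $E\mapsto\bar{E}$. The observed outcome probabilities swap, $\pr(D=1\mid\bar{E}=1)=p_0$ and $\pr(D=1\mid\bar{E}=0)=p_1$, and the exposure prevalence becomes $\pr(\bar{E}=1)=1-f$. The potential outcomes satisfy $D(\bar{E}=1)=D(0)$ and $D(\bar{E}=0)=D(1)$, and ignorability is stated relative to $U$, which is unchanged, so the counterfactual representations of all the risk differences transfer directly; the treated and untreated subpopulations trade places, and each causal risk difference for $\bar{E}$ equals the negative of one for $E$. In particular $\SRD_{\bar{E}D}=-\SRD_{ED}$, with the analogous sign-flip-and-swap for the $+$ and $-$ versions. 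Crucially, the bounding factor is invariant: $\GRR_{UD}$ is symmetric in $E$, and the modified definition $\GRR_{EU}=\max_u\GRR_{EU}^{-1}(u)$ is chosen precisely so that $\GRR_{\bar{E}U}=\GRR_{EU}$ (this is exactly the computation carried out in the proof of Proposition \ref{prop::preventive}), whence $\BF_{\bar{U}}=\BF_U$.

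With the dictionary in hand the three displayed inequalities fall out by applying Proposition \ref{prop::RD-biasfactor} to $\bar{E}$ and negating: each lower bound on a $\bar{E}$ risk difference becomes the corresponding upper bound on an $E$ risk difference, with $p_1,p_0,\BF_U$ substituted according to the dictionary. The whole-population bound follows either by negating the $\SRD_{\bar{E}D}$ bound directly or by recombining through $\SRD_{ED}=f\SRD_{ED+}+(1-f)\SRD_{ED-}$; the two factored forms are the two equivalent factorizations already recorded in Proposition \ref{prop::RD-biasfactor}, read off under the $f\leftrightarrow 1-f$ correspondence. For the final unknown-$f$ clause I would invoke Proposition \ref{prop::unknown-f} for $\bar{E}$: its hypotheses $\bar{p}_1>\bar{p}_0$ and $1\le\BF_{\bar{U}}\le\RR_{\bar{E}D}^\obs$ translate to $p_0>p_1$ (automatic here) and $1\le\BF_U\le 1/\RR_{ED}^\obs$, and negating its conclusion yields the stated inequality for the whole-population risk difference.

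The only real obstacle is bookkeeping rather than mathematics: one must keep straight simultaneously the sign flip $\SRD\mapsto-\SRD$, the interchange of the exposed and unexposed subpopulations, and the swap $f\leftrightarrow 1-f$, and must verify the single substantive identity $\BF_{\bar{U}}=\BF_U$ that makes the reduction legitimate. Once those are checked, no inequality is proved afresh — every bound is a transcription of Propositions \ref{prop::RD-biasfactor} and \ref{prop::unknown-f}.
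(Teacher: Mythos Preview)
Your strategy is exactly the paper's: set $\bar E=1-E$, apply Proposition~\ref{prop::RD-biasfactor} to the now apparently causative $\bar E$, and translate back. Your dictionary is also correct, including the invariance $\BF_{\bar U}=\BF_U$ under the modified definition of $\GRR_{EU}$, and the identifications $\SRD_{\bar E D+}=-\SRD_{ED-}$, $\SRD_{\bar E D-}=-\SRD_{ED+}$, $\bar f=1-f$.

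There is one point where your careful bookkeeping will not reproduce the displayed statement. Carrying your dictionary through gives
\[
\SRD_{ED+}\le p_1-\frac{p_0}{\BF_U},\qquad
\SRD_{ED-}\le p_1\,\BF_U-p_0,\qquad
\SRD_{ED}\le (p_1\,\BF_U-p_0)\Bigl\{(1-f)+\frac{f}{\BF_U}\Bigr\},
\]
i.e.\ the two subpopulation bounds, and the roles of $f$ and $1-f$ in the whole-population bound, are interchanged relative to the proposition as written. The paper's own proof arrives at the stated form by asserting $\SRD_{\bar E D+}=-\SRD_{ED+}$ and $\SRD_{\bar E D-}=-\SRD_{ED-}$ (no subpopulation swap) and by keeping $f$ rather than $\bar f=1-f$ in the third line; your ``sign-flip-and-swap'' is the correct accounting, so you should expect this discrepancy and not force your derivation to match the displayed labels. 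The final unknown-$f$ clause is unaffected either way: $\SRD_{ED}\le\max$ of the two subpopulation upper bounds, and under $1\le\BF_U\le 1/\RR_{ED}^\obs$ one checks $p_1\,\BF_U-p_0\le p_1-p_0/\BF_U$, so the maximum is $p_1-p_0/\BF_U$ regardless of which subpopulation it is attached to.
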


\begin{proof}
[Proof of Proposition \ref{prop::rd-preventive}.]
Define $\bar{E} = 1-E$. Applying Proposition \ref{prop::RD-biasfactor}, we have
\begin{eqnarray*}
\SRD_{\bar{E} D+} & \geq &  \pr(D=1\mid \bar{E}=1) - \pr(D=1\mid \bar{E}=0)\times \BF_U,\\
\SRD_{\bar{E}D-} &\geq & \pr(D=1\mid \bar{E}=1)/\BF_U - \pr(D=1\mid \bar{E}=0), \\
\SRD_{\bar{E}D} &\geq &  \left\{     \pr(D=1\mid \bar{E}=1)    - \pr(D=1\mid \bar{E}=0)\times \BF_U \right\}  \times  \left\{    f  + (1-f)/\BF_U  \right\}\\
&=&\left\{     \pr(D=1\mid \bar{E}=1) / \BF_U   - \pr(D=1\mid \bar{E}=0) \right\}  \times  \left\{    f\times \BF_U + (1-f)  \right\} .
\end{eqnarray*}

Since $\SRD_{\bar{E} D+}  =  - \SRD_{ ED+} , \SRD_{\bar{E}D-} = -  \SRD_{ED-}$ and $ \SRD_{\bar{E}D}  = - \SRD_{ED} $, the first three conclusions follow.
When $f$ is unknown and $1\leq \BF_U\leq 1/\RR_{ED}^\obs$, we have
$
\SRD_{ED} \leq \max(  \SRD_{ED+} , \SRD_{ED-} )
 = p_1  - p_0/\BF_U.
$

\end{proof}

The above discussion is within strata of observed covariates $C$.  All probabilities are essentially conditional probabilities, e.g., $\pr(D=1\mid E=1, C=c), \pr(E=1\mid C=c), etc.$ Consequently, the bounding factor and causal risk differences are also conditional, denoted as $\BF_{U|c}, \SRD_{ED|c+}, \SRD_{ED|c-}$ and $ \SRD_{ED|c}$. Due to the linearity of the risk difference, i.e., $\SRD_{ED+} = \sum_c \SRD_{ED|c+} \pr(C=c\mid E=1), \SRD_{ED-} = \sum_c \SRD_{ED|c-} \pr(C=c\mid E=0)$ and $\SRD_{ED} = \sum_c \SRD_{ED|c} \pr(C=c)$, we have the following results about the marginal risk differences:
\begin{eqnarray*}
\SRD_{ED+} &\geq &  \sum_c \left\{  \pr(D=1\mid E=1, C=c) - \pr(D=1\mid E=0, C=c)\times \BF_{U|c} \right\} \pr(C=c\mid E=1) , \\
\SRD_{ED-} &\geq &  \sum_c \left\{  \pr(D=1\mid E=1, C=c) / \BF_{U|c} - \pr(D=1\mid E=0, C=c)  \right\}  \pr(C=c\mid E=0),\\
\SRD_{ED} &\geq & f \sum_c \left\{  \pr(D=1\mid E=1, C=c) - \pr(D=1\mid E=0, C=c)\times \BF_{U|c} \right\} \pr(C=c\mid E=1) \\
&&+(1-f)   \sum_c \left\{  \pr(D=1\mid E=1, C=c) / \BF_{U|c} - \pr(D=1\mid E=0, C=c)  \right\}  \pr(C=c\mid E=0).
\end{eqnarray*}

\subsection{Statistical Inference for the Causal Risk Differences}

In previous subsections we discussed the population quantities assuming that we knew the distribution of $(E,D,C)$. In this subsection, we will discuss the finite sample inference for the causal risk differences. 
We can straightforwardly estimate $f, p_1$ and $p_0$ by sample frequencies $\widehat{f}, \widehat{p}_1$ and $\widehat{p}_0$ with standard errors $s, s_1$ and $s_0$, respectively. Then we can estimate the lower bound for $\SRD_{ED+}$ by $\widehat{p}_1 - \widehat{p}_0\times \BF_U$ with standard error
$  
(
s_1^2 + s_0^2 \times \BF_U^2
)^{1/2},
$
 estimate the lower bound for $\SRD_{ED-}$ by $\widehat{p}_1/\BF_U - \widehat{p}_0$ with standard error
$
(
s_1^2 / \BF_U^2 + s_0^2
)^{1/2},
$
and estimate the lower bound for $\SRD_{ED}$ by $(\widehat{p}_1 - \widehat{p}_0\times \BF_U)\times \{ \widehat{f} + (1-\widehat{f}) / \BF_U \} $ or $  (\widehat{p}_1/\BF_U - \widehat{p}_0)\times \{  \widehat{f}\times \BF_U + (1-\widehat{f})  \} $ with standard error
$$ 
\sqrt{  
(s_1^2 + s_0^2 \times \BF_U^2 )   \left(  \widehat{f} + \frac{ 1-\widehat{f} } {  \BF_U  } \right) ^2
+ (\widehat{p}_1 - \widehat{p}_0\times \BF_U)^2     (1-\BF_U^{-1})^2 s^2
},
$$
using a standard argument of the delta-method. After obtaining the point estimates and their standard errors, we can construct confidence intervals for these causal risk differences.

Note that even without estimating the prevalence, $f$, of the exposure, if the exposure is apparently causative, we can use the lower bound of $\min(\SRD_{ED+},\SRD_{ED-})$ as a lower bound for $\SRD_{ED}$. The point estimate of the causal risk difference averaged over the observed covariates can be obtained by the weighted average of the point estimates of the causal risk differences within strata of $C$ with the proportions of the strata as the weights, and the corresponding sampling variance is the weighted average of the sampling variances within strata with the squared proportions of the strata as the weights.

\subsection{Implied Cornfield Conditions}

The results in Proposition \ref{prop::RD-biasfactor} imply the following Cornfield conditions.

\begin{proposition}
\label{prop::cornfield-rd}
For an unmeasured confounder to reduce the observed risk difference to be $\SRD_{ED+}, \SRD_{ED-}$ and $\SRD_{ED}$ respectively, the joint Cornfield conditions are
\begin{eqnarray*}
&&\BF_U \geq   (p_1 - \SRD_{ED+} ) / p_0 ,\\
&&\BF_U \geq   p_1 / (p_0+ \SRD_{ED-}) ,\\
&&\BF_U \geq     \frac{   \sqrt{   \{  \SRD_{ED} + p_0(1-f) -  p_1f   \}^2+4p_1p_0f(1-f)       }  -    \{  \SRD_{ED} + p_0(1-f) - p_1f   \} }{   2p_0f }.
\end{eqnarray*}
\end{proposition}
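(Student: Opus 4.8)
The plan is to invert each of the three lower bounds established in Proposition \ref{prop::RD-biasfactor}. The key observation is that each lower bound, viewed as a function of $\BF_U$ on the range $\BF_U \geq 1$, is decreasing in $\BF_U$; consequently, requiring the true causal risk difference to be reduced all the way down to a prescribed target is equivalent to requiring the corresponding lower bound not to exceed that target. Solving the resulting inequality for $\BF_U$ then yields exactly the stated Cornfield-type condition. Thus the whole proposition is a matter of algebraic inversion, with no new probabilistic input beyond Proposition \ref{prop::RD-biasfactor}.

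For the exposed population I would start from $\SRD_{ED+} \geq p_1 - p_0 \BF_U$. Since confounding of strength $\BF_U$ cannot push the true causal effect below this bound, a reduction to the target value $\SRD_{ED+}$ is feasible only if $p_1 - p_0 \BF_U \leq \SRD_{ED+}$, which rearranges directly to $\BF_U \geq (p_1 - \SRD_{ED+})/p_0$. The unexposed case is handled identically: from $\SRD_{ED-} \geq p_1/\BF_U - p_0$ the feasibility requirement $p_1/\BF_U - p_0 \leq \SRD_{ED-}$ gives $\BF_U \geq p_1/(p_0 + \SRD_{ED-})$. Both of these are one-line linear manipulations and present no difficulty.

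The substantive step is the whole-population condition, which I expect to be the main obstacle. Starting from $\SRD_{ED} \geq (p_1 - p_0 \BF_U)\{f + (1-f)/\BF_U\}$, I would impose $(p_1 - p_0 \BF_U)\{f + (1-f)/\BF_U\} \leq \SRD_{ED}$, clear the denominator by multiplying through by $\BF_U > 0$, and collect terms into the quadratic
$$
p_0 f\, \BF_U^2 + \{\SRD_{ED} + p_0(1-f) - p_1 f\}\, \BF_U - p_1(1-f) \geq 0.
$$
The delicate point is the correct handling of this quadratic inequality. I would note that the leading coefficient $p_0 f$ is positive while the constant term $-p_1(1-f)$ is negative, so the product of the two roots is negative; hence the parabola opens upward and has one negative and one positive root. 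Because $\BF_U \geq 1 > 0$, the inequality holds precisely when $\BF_U$ is at least the larger (positive) root, which by the quadratic formula equals
$$
\frac{\sqrt{\{\SRD_{ED} + p_0(1-f) - p_1 f\}^2 + 4 p_1 p_0 f(1-f)} - \{\SRD_{ED} + p_0(1-f) - p_1 f\}}{2 p_0 f},
$$
exactly the expression in the third displayed condition. A final remark that the discriminant $\{\SRD_{ED} + p_0(1-f) - p_1 f\}^2 + 4 p_1 p_0 f(1-f)$ is nonnegative, which is immediate since $p_1, p_0, f, 1-f$ are all nonnegative, confirms that this root is real and completes the argument.
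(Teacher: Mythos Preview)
Your proposal is correct and follows essentially the same approach as the paper's own proof: both invert the three bounds from Proposition \ref{prop::RD-biasfactor}, handle the first two by direct rearrangement, and for the third multiply through by $\BF_U$ to obtain the identical quadratic $(p_0 f)\BF_U^2 + \{\SRD_{ED} + p_0(1-f) - p_1 f\}\BF_U - p_1(1-f) \geq 0$, then take the positive root. Your justification is in fact slightly more detailed than the paper's, since you explicitly argue via the sign of the product of the roots and verify nonnegativity of the discriminant.
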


\begin{proof}
[Proof of Proposition \ref{prop::cornfield-rd}.]
It is straightforward to see that the first two conclusions of Proposition \ref{prop::RD-biasfactor} imply the first two inequalities. From the third conclusion of Proposition \ref{prop::RD-biasfactor}, we have the following quadratic inequality about $\BF_U$:
$$
(p_0f) \BF_U^2 +  \{    p_0(1-f) + \SRD_{ED} - p_1f    \} \BF_U - p_1(1-f) \geq 0. 
$$
The corresponding equation has one negative root and the following positive root:
$$
\BF_U^* = \frac{   \sqrt{   \{  \SRD_{ED} + p_0(1-f) -  p_1f   \}^2+4p_1p_0f(1-f)       }  -    \{  \SRD_{ED} + p_0(1-f) - p_1f   \} }{   2p_0f }.
$$
Since $\BF_U>0$, the inequality has the solution $\BF_U \geq \BF_U^*.$
\end{proof}

Similar to the discussion in the last two sections, we can also derive the low and high threshold Cornfield conditions from the above joint Cornfield conditions for $(\GRR_{EU}, \GRR_{UD})$.
If $\SRD_{ED+}, \SRD_{ED-}$ and $\SRD_{ED}$ are zero, all the conditions in Proposition \ref{prop::cornfield-rd} reduce to $\BF_U \geq \RR_{ED}^\obs$, the one derived from the result about the relative risk of the exposure on the outcome. Therefore, the formula from the risk difference is the same as that derived from the relative risk under the null hypothesis, but they are different under the alternative hypotheses.

With finite sample, we can also find the smallest bounding factor that can reduce the lower confidence limit of the lower bound of the causal risk differences to a certain magnitude. 
We will discuss $(1-\alpha)\%$ confidence intervals based on asymptotic normality, and let $z_{\alpha} = \Phi^{-1}(1-\alpha/2)$ denote the upper $\alpha/2$ quantile of the standard normal distribution (e.g., when $\alpha=0.05$, $z_{0.05}=1.96$). In order to reduce the confidence interval of the risk difference on the exposed to cover a true causal risk difference $\SRD_{ED+}$, the bounding factor must satisfy
$$
\widehat{p}_1 - \widehat{p}_0\times \BF_U - z_\alpha 
\sqrt{ 
s_1^2 + s_0^2 \times \BF_U^2
} \leq \SRD_{ED+},
$$
which has the following solution:
\begin{eqnarray}
&\BF_U \geq 
 \frac{   \widehat{p}_0 (\widehat{p}_1 - \SRD_{ED+})  - \sqrt{     \widehat{p}_0^2 (\widehat{p}_1 - \SRD_{ED+})^2 -   (\widehat{p}_0^2 - z_{\alpha}^2 s_0^2 )  \{    ( \widehat{p}_1  + \SRD_{ED+} )^2 -  z_{\alpha}^2 s_1^2 \}      }       }
{ \widehat{p}_0^2 - z_{\alpha}^2 s_0^2  } .&\label{eq::solve1}
\end{eqnarray}
In order to reduce the confidence interval of the risk difference on the unexposed to cover a true causal risk difference $\SRD_{ED-}$, the bounding factor must satisfy
$$
\widehat{p}_1/\BF_U - \widehat{p}_0 - z_\alpha 
\sqrt{ 
s_1^2 / \BF_U^2 + s_0^2
} \leq \SRD_{ED-},
$$
which has the following solution:
\begin{eqnarray}
&\BF_U \geq
\frac{          \widehat{p}_1(\widehat{p}_0 + \SRD_{ED-})    -       \sqrt{     \widehat{p}_1^2(\widehat{p}_0 + \SRD_{ED-})^2 -  \{  (\widehat{p}_0 + \SRD_{ED-})^2 - z_\alpha^2 s_0^2  \}   (\widehat{p}_1^2 - z_\alpha^2 s_1^2)       }     }
{    (\widehat{p}_0 + \SRD_{ED-})^2 - z_\alpha^2 s_0^2        } . & \label{eq::solve2}
\end{eqnarray}
Note that if we assume $\SRD_{ED+} =\SRD_{ED-} =0$, the above solutions in (\ref{eq::solve1}) and (\ref{eq::solve2}) reduce to the same form:
$$
\BF_U \geq
\frac{          \widehat{p}_1\widehat{p}_0     -       \sqrt{     \widehat{p}_1^2\widehat{p}_0^2 -  (   \widehat{p}_0^2 - z_\alpha^2 s_0^2 ) (\widehat{p}_1^2 - z_\alpha^2 s_1^2)       }     }
{    \widehat{p}_0 ^2 - z_\alpha^2 s_0^2        } .
$$
In order to reduce the confidence interval of the risk difference to cover a true causal risk difference $\SRD_{ED}$, the bounding factor must satisfy
\begin{eqnarray} 
&&(\widehat{p}_1 - \widehat{p}_0\times \BF_U) \left(   \widehat{f} + \frac{ 1-\widehat{f} } {  \BF_U}  \right)  \nonumber  \\
&& - z_\alpha 
\sqrt{  
(s_1^2 + s_0^2 \times \BF_U^2 )   \left(  \widehat{f} + \frac{ 1-\widehat{f} } {  \BF_U  } \right) ^2
+ (\widehat{p}_1 - \widehat{p}_0\times \BF_U)^2     (1-\BF_U^{-1})^2 s^2
}  \leq  \SRD_{ED}, \nonumber\\ \label{eq::solve3}
\end{eqnarray} 
which can be solved numerically. For example, we can apply a grid search for the solution of (\ref{eq::solve3}) over the following bounded range:
$$
\BF_U\in \left(  1,
 \frac{   \sqrt{   \{  \SRD_{ED} + \widehat{p}_0(1-\widehat{f}) -  \widehat{p}_1 \widehat{f}   \}^2+4\widehat{p}_1 \widehat{p}_0f(1-\widehat{f})       }  -    \{  \SRD_{ED} + \widehat{p}_0(1-\widehat{f}) - \widehat{p}_1 \widehat{f}   \} }{   2\widehat{p}_0 \widehat{f} }
\right),
$$
since the point estimate has already been reduced to $\SRD_{ED}$ when $\BF_U$ attains the above upper bound of range.

\section{SAS Code for the Risk Difference Using Sensitivity Parameters on the Relative Risk Scale}

In this section, we provide SAS code for sensitivity analysis on the risk difference scale. The SAS code here illustrates analysis using logistic regression for a binary outcome as this is an approach that is commonly employed.

%

Suppose we have a dataset named ``leadlogit'' with variables lead, smoking, age, male. Suppose we are interested in the risk difference of smoking on the high blood lead level at the covariate level, age = 50 and male = 1.

%
%
%
%
%
%
%
%
%

To implement sensitivity analysis for risk difference we need to obtain point estimate and standard error for $f = \pr(E=1)$, which can be done via the following SAS code.


{\setlength{\baselineskip}{0.5\baselineskip}
\scriptsize
\begin{verbatim}
proc means data=lib.leadlogit ; /*f and se(f)*/
	var smoking;
    output out=sumstat mean=mean var=var N=N;
run;

data sumstat (KEEP=MEAN SE);
	set sumstat;
	se=(var/N)**0.5;
run;
\end{verbatim}
}

The following code obtains the predicted probabilities $p_{1|c} = \pr\{ Y(1)=1\mid C=c\}$ and $p_{0|c}=\pr\{ Y(0)=1\mid C=c\}$ with standard errors. 


{\setlength{\baselineskip}{0.5\baselineskip}
\scriptsize
\begin{verbatim}
proc logistic data = lib.leadlogit;/*predict probs*/
	model lead = smoking age male;
	score data = lib.leadlogit_new out=logit_pred clm;
run;

proc contents data =logit_pred ;
run;

data logit_pred (keep=P_TRUE se_p);/*p1 p0 se(p1) se(p0)*/
	set logit_pred;

	logit_LCL_TRUE	=log(LCL_TRUE/(1-LCL_TRUE));
	logit_P_TRUE	=log(P_TRUE/(1-P_TRUE));
	logit_UCL_TRUE 	=log(UCL_TRUE /(1-UCL_TRUE ));

	se_eta  =(logit_UCL_TRUE-logit_LCL_TRUE)/2/1.96;
	se_p	=P_TRUE**2/EXP(logit_P_TRUE)*se_eta;
run;
\end{verbatim}
}

%

In the following SAS code, we need to input from line 2 to line 7 the point estimates and standard errors of the prevalence $f $, and the two predicted outcome probabilities $p_{1|c}  $ and $p_{0|c}  $. The output contains lower bounds for the point estimates and confidence intervals of the causal risk differences for the exposed, unexposed and the whole population. Figure \ref{fg::SAS-output} is the SAS output for the causal risk difference estimates for the whole population. 
For other problems, we need to change the numbers from line 2 to line 7 accordingly. We can also change the measures of the strength of confounding in lines 8 and 9. The output from SAS will be similar to the one shown in Figure \ref{fg::SAS-output}.

{\setlength{\baselineskip}{0.5\baselineskip}
\scriptsize
\begin{verbatim}
proc iml;/*Sensitivity analysis without assumptions for RD*/
f             = 0.2032934132;/*point and interval estimate of prevalence and response rates*/
p1            = 0.101645862;
p0            = 0.0398930775;
s2_f          = 0.0069647038;
s2_p1         = 0.0147497019;
s2_p0         = 0.0058931321;
RR_EU = {1.2 1.3 1.5 1.8 2 2.5 3 5};/*strenghth of confounding*/
RR_UD = {1.2 1.3 1.5 1.8 2 2.5 3 5};
rownames_EU = CHAR(RR_EU, NCOL(RR_EU), 1);
colnames_UD = CHAR(RR_UD, NCOL(RR_UD), 1);
BiasFactor  = J(NCOL(RR_EU), NCOL(RR_UD), 1);
SPACE       = J(NCOL(RR_EU), NCOL(RR_UD), " ");
LeftP       = J(NCOL(RR_EU), NCOL(RR_UD), "(");
Mid         = J(NCOL(RR_EU), NCOL(RR_UD), ",");
RightP      = J(NCOL(RR_EU), NCOL(RR_UD), ")");
/*initial values*/
RD_exposed   = BiasFactor;
RD_exposed_L = BiasFactor;
RD_exposed_U = BiasFactor;
RD_unexposed   = BiasFactor;
RD_unexposed_L = BiasFactor;
RD_unexposed_U = BiasFactor;
RD_whole   = BiasFactor;
RD_whole_L = BiasFactor;
RD_whole_U = BiasFactor;
W_whole = BiasFactor;
Var_exposed = BiasFactor;
Var_unexposed = BiasFactor;
Var_whole = BiasFactor;
/*Sensitivity analysis*/
DO i=1 TO NCOL(RR_EU);
      Do j=1 to NCOL(RR_UD);
	      BiasFactor[i, j] = RR_EU[i]*RR_UD[j]/(RR_EU[i] + RR_UD[j] - 1);
		  /*exposed*/
          RD_exposed[i, j]     = p1 - p0*BiasFactor[i, j];
          Var_exposed[i, j]    = s2_p1 + s2_p0*(BiasFactor[i, j])**2;
          RD_exposed_L[i, j]   = RD_exposed[i, j] - 1.96*sqrt(Var_exposed[i, j]);
		  RD_exposed_U[i, j]   = RD_exposed[i, j] + 1.96*sqrt(Var_exposed[i, j]);
          /*exposed*/
          RD_unexposed[i, j]   = p1/BiasFactor[i, j] - p0;
		  Var_unexposed[i, j]  = s2_p1/(BiasFactor[i, j])**2 + s2_p0;
          RD_unexposed_L[i, j] = RD_unexposed[i, j] - 1.96*sqrt(Var_unexposed[i, j]);
		  RD_unexposed_U[i, j] = RD_unexposed[i, j] + 1.96*sqrt(Var_unexposed[i, j]);
          /*whole*/
		  W_whole[i, j]        = f + (1-f)/BiasFactor[i, j];
          RD_whole[i, j]       = RD_exposed[i, j]*W_whole[i, j];
          Var_whole[i, j]      = Var_exposed[i, j]*(W_whole[i, j])**2 
                                           + (RD_exposed[i, j])**2*(1-1/BiasFactor[i, j])**2*s2_f;
          RD_whole_L[i, j]     = RD_whole[i, j] - 1.96*sqrt(Var_whole[i, j]);
          RD_whole_U[i, j]     = RD_whole[i, j] + 1.96*sqrt(Var_whole[i, j]);
       END;
END;
/*print;*/
RD_exposed = CATX(" ", CHAR(round(RD_exposed, 0.0001)), LeftP, CHAR(round(RD_exposed_L, 0.0001)), Mid, 
                          CHAR(round(RD_exposed_U, 0.0001)), RightP);
print RD_exposed[colname = colnames_UD
                 rowname = rownames_EU
                 label = "Bounds on corrected estimates and confidence intervals for risk difference 
                 among exposed (columns correspond to increasing strength of the risk ratio of U 
                 on the outcome; rows correspond to increasing strength of risk ratio 
                 relating the exposure and U)"];
RD_unexposed = CATX(" ", CHAR(round(RD_unexposed, 0.0001)), LeftP, CHAR(round(RD_unexposed_L, 0.0001)), Mid, 
                               CHAR(round(RD_unexposed_U, 0.0001)), RightP);
print RD_unexposed[colname = colnames_UD
                   rowname = rownames_EU
                   label = "Bounds on corrected estimates and confidence intervals for risk difference 
                   among unexposed (columns correspond to increasing strength of the risk ratio of U 
                   on the outcome; rows correspond to increasing strength of risk ratio 
                   relating the exposure and U)"];
RD_whole = CATX(" ", CHAR(round(RD_whole, 0.0001)), LeftP, CHAR(round(RD_whole_L, 0.0001)), Mid, 
                              CHAR(round(RD_whole_U, 0.0001)), RightP);
print RD_whole[colname = colnames_UD
               rowname = rownames_EU
               label = "Bounds on corrected estimates and confidence intervals for risk difference 
               among the whole population (columns correspond to increasing strength of the risk ratio of U 
               on the outcome; rows correspond to increasing strength of risk ratio 
               relating the exposure and U)"];
run;
\end{verbatim}
}

\begin{figure}
\includegraphics[width=\textwidth]{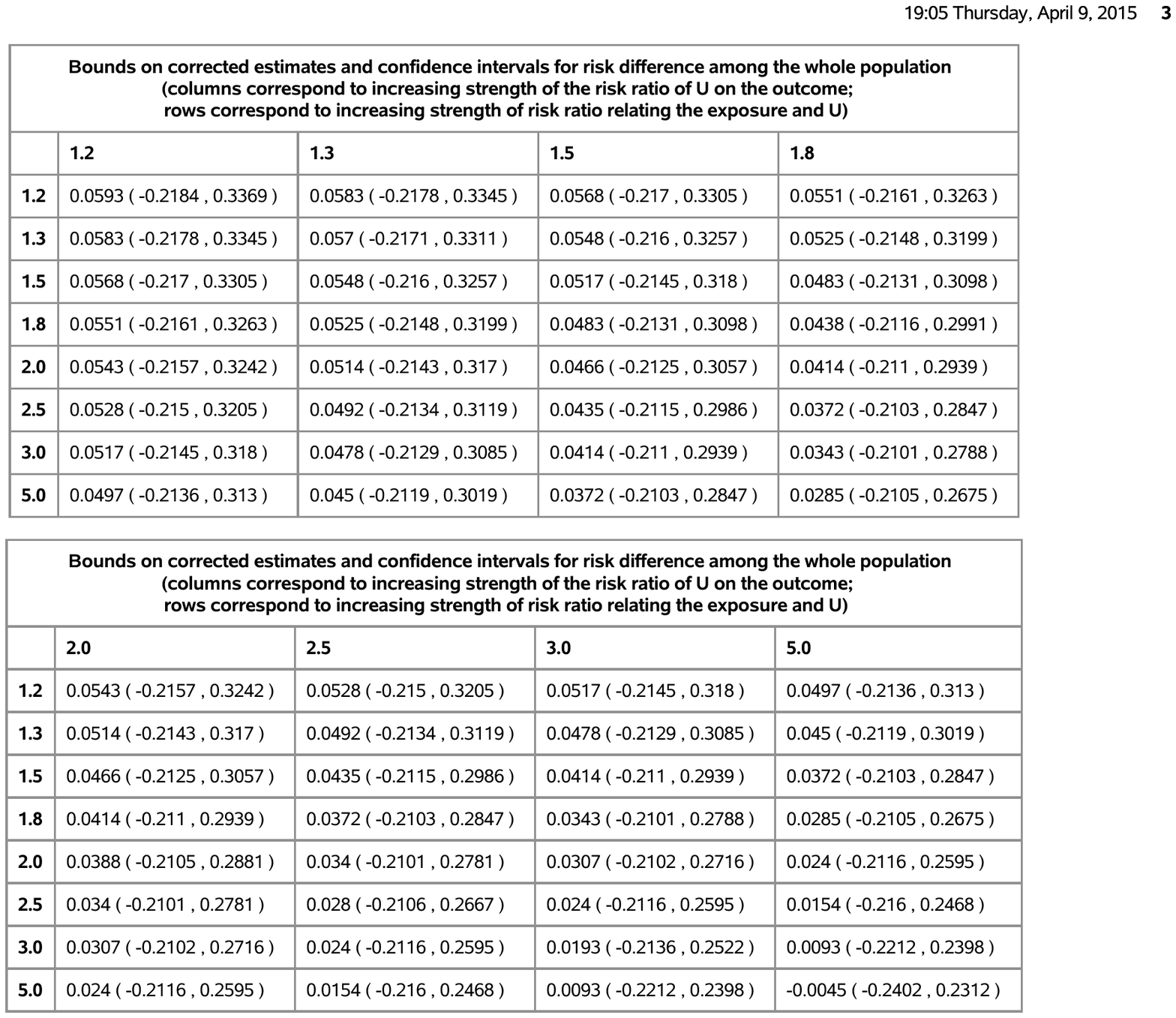}
\caption{SAS Output of Sensitivity Analysis on the Risk Difference Scale for the Whole Population}\label{fg::SAS-output}
\end{figure}

\section{Results for the Risk Difference Using Sensitivity Parameters on the Risk Difference Scale}

\subsection{A Useful Proposition}

We first recall some definitions in the main text, and assume a categorical unmeasured confounder $U$. Let $\RD_{ED}^\obs = \pr(D=1\mid E=1) -  \pr(D=1\mid E=0)$ denote the observed risk difference,
\begin{eqnarray*}
\SRD_{ED} = \sum_{k=0}^{K-1} \{ \pr(D=1\mid E=1, U=k) - \pr(D=1\mid E=0, U=k)\} \pr(U=k)
\end{eqnarray*}
denote the true causal risk difference, and $\CRD_{ED} =  \RD_{ED}^\obs -  \SRD_{ED}   $ denote the confounding risk difference of the exposure $E$ on the outcome $D$.
Define $\alpha_k = \pr(U=k\mid E=1) - \pr(U=k\mid E=0) $ and $\GRD_{EU} = \max_{k\geq 1}  |\alpha_k|$. Define $\beta_{k}^* =  \pr(D=1\mid E=1, U=k) - \pr(D=1\mid E=1, U=0) $ and $\beta_{k}   =  \pr(D=1\mid E=0, U=k) - \pr(D=1\mid E=0, U=0)  $. Define $\GRD_{UD|E=1}=\max_{k\geq 1}  |\beta_{k}^*|, \GRD_{UD|E=0} = \max_{k\geq 1}  |\beta_{k}|$ and $\GRD_{UD} = \max(\GRD_{UD|E=1}, \GRD_{UD|E=0})$.
The confounding risk difference can be decomposed as follows. 

\begin{proposition}\label{lemma::CRD}
The confounding risk difference of $E$ on $D$, $\CRD_{ED}$, can be expressed as
$$
\CRD_{ED} = \RD_{ED}^\obs - \SRD_{ED} =   \sum_{k=1}^{K-1} \alpha_k  \{  \beta_k^* \pr(E=0) + \beta_k \pr(E=1) \}.
$$
\end{proposition}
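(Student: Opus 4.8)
The plan is to expand both $\RD_{ED}^\obs$ and $\SRD_{ED}$ by the law of total probability over the levels of $U$, subtract, and regroup so that the factor $\alpha_k = \pr(U=k\mid E=1) - \pr(U=k\mid E=0)$ emerges. First I would write $\pr(D=1\mid E=1) = \sum_k \pr(D=1\mid E=1,U=k)\pr(U=k\mid E=1)$ and similarly for $E=0$, so that $\RD_{ED}^\obs$ becomes a difference of sums of the conditional outcome probabilities weighted by $\pr(U=k\mid E=1)$ and $\pr(U=k\mid E=0)$ respectively.

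Next I would rewrite the marginal weight inside $\SRD_{ED}$ using the identity $\pr(U=k) = \pr(E=1)\pr(U=k\mid E=1) + \pr(E=0)\pr(U=k\mid E=0)$. Forming $\CRD_{ED} = \RD_{ED}^\obs - \SRD_{ED}$ and collecting, separately, the coefficient of each $\pr(D=1\mid E=1,U=k)$ and each $\pr(D=1\mid E=0,U=k)$, the $E=1$ outcome probabilities pick up a coefficient $\pr(E=0)\,\alpha_k$ and the $E=0$ outcome probabilities a coefficient $\pr(E=1)\,\alpha_k$, yielding the intermediate identity
\[
\CRD_{ED} = \pr(E=0)\sum_{k} \pr(D=1\mid E=1,U=k)\,\alpha_k + \pr(E=1)\sum_{k}\pr(D=1\mid E=0,U=k)\,\alpha_k .
\]

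The crucial step is then to center each inner sum at the reference level $k=0$. Because $\sum_k \pr(U=k\mid E=1) = \sum_k \pr(U=k\mid E=0) = 1$, we have $\sum_k \alpha_k = 0$; consequently subtracting the constant reference-level outcome probability $\pr(D=1\mid E=1,U=0)$ (respectively $\pr(D=1\mid E=0,U=0)$) from every term leaves each sum unchanged. This replaces $\pr(D=1\mid E=1,U=k)$ by $\beta_k^*$ and $\pr(D=1\mid E=0,U=k)$ by $\beta_k$, and since $\beta_0^* = \beta_0 = 0$ the $k=0$ term drops out, restricting the sums to $k=1,\ldots,K-1$. Factoring $\alpha_k$ out of the two pieces then gives exactly $\sum_{k=1}^{K-1}\alpha_k\{\beta_k^*\pr(E=0) + \beta_k\pr(E=1)\}$, as claimed.

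I do not anticipate a genuine obstacle: the statement is an algebraic identity, and the only ingredient beyond bookkeeping is the observation $\sum_k \alpha_k = 0$, which simultaneously justifies centering at the reference category and forces the $k=0$ term to vanish. The one point demanding care is the cross-pairing produced by the regrouping, where the $E=1$ outcome differences $\beta_k^*$ become multiplied by $\pr(E=0)$ and the $E=0$ differences $\beta_k$ by $\pr(E=1)$; it is easy to transpose these two weights, so I would keep the coefficients explicitly attached to the correct conditional probabilities throughout.
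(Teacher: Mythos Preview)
Your proposal is correct and follows essentially the same route as the paper's proof: both expand over the levels of $U$, use the law of total probability to show that the coefficient of $\pr(D=1\mid E=1,U=k)$ in $\CRD_{ED}$ is $\pr(E=0)\alpha_k$ and that of $\pr(D=1\mid E=0,U=k)$ is $\pr(E=1)\alpha_k$, and then exploit $\sum_k\alpha_k=0$ (equivalently $\alpha_0=-\sum_{k\ge 1}\alpha_k$) to center at the reference category and drop the $k=0$ term. Your caution about the cross-pairing of $\beta_k^*$ with $\pr(E=0)$ and $\beta_k$ with $\pr(E=1)$ is well placed and matches the paper exactly.
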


\noindent {\it Proof of Proposition \ref{lemma::CRD}.}
The true and observed risk differences of $E$ on $D$ can be expressed as
\begin{eqnarray*}
\SRD_{ED} &=& \sum_{k=0}^{K-1}     \pr(D=1\mid E=1, U=k) \pr(U=k)    - \sum_{k=0}^{K-1}   \pr(D=1\mid E=0, U=k)  \pr(U=k), \\
\RD_{ED}^\obs &=&  \sum_{k=0}^{K-1} \pr( D=1\mid E=1, U=k ) \pr(U=k\mid E=1)  - \sum_{k=0}^{K-1} \pr(D=1\mid E=0, U=k) \pr(U=k\mid E=0).
\end{eqnarray*}
Therefore, the confounding risk difference of $E$ on $D$, $\CRD_{ED}$, can be expressed as
\begin{eqnarray*}
\CRD_{ED}  
&=& \sum_{k=0}^{K-1} \pr( D=1\mid E=1, U=k ) \{   \pr(U=k\mid E=1) - \pr(U=k) \}  \\
&& -\sum_{k=0}^{K-1} \pr(D=1\mid E=0, U=k) \{  \pr(U=k\mid E=0)  - \pr(U=k) \}  .
\end{eqnarray*}

Applying the law of total probability, we have the following results:
$$
\pr(U=k\mid E=1) - \pr(U=k) = \alpha_k  \pr(E=0), \quad 
\pr(U=k\mid E=0) - \pr(U=k)= - \alpha_k  \pr(E=1).
$$
Therefore, the confounding risk difference can be rewritten as
\begin{eqnarray*}
\CRD_{ED} &=& \sum_{k=0}^{K-1} \alpha_k \pr(D=1\mid E=1, U=k) \pr(E=0) +  \sum_{k=0}^{K-1} \alpha_k \pr(D=1\mid E=0, U=k) \pr(E=1)\\
&=&\sum_{k=0}^{K-1} \alpha_k  \{    \pr(D=1\mid E=1, U=k) \pr(E=0) +    \pr(D=1\mid E=0, U=k) \pr(E=1)   \}.
\end{eqnarray*}
Using the fact that $\alpha_0 = -\sum_{k=1}^{K-1} \alpha_k$, we can obtain that
\begin{eqnarray*}
\CRD_{ED} &=&\sum_{k=1}^{K-1} \alpha_k  \{    \pr(D=1\mid E=1, U=k) \pr(E=0) +    \pr(D=1\mid E=0, U=k) \pr(E=1)   \} \\
&&- \sum_{k=1}^{K-1} \alpha_k  \{    \pr(D=1\mid E=1, U=0) \pr(E=0) +    \pr(D=1\mid E=0, U=0) \pr(E=1)   \} \\
&=& \sum_{k=1}^{K-1} \alpha_k  \{  \beta_k^* \pr(E=0) + \beta_k \pr(E=1) \}. \Box
\end{eqnarray*}

\subsection{Binary Confounder}

For a binary confounder $U$ with $K=2$, we have the following proposition.

\begin{proposition}
\label{prop::binary-RD}
When $U$ is binary, we have $\GRD_{EU}\times \GRD_{UD}\geq   \RD_{ED}^\obs - \SRD_{ED} $, implying
$$
\min\left(    \RD_{EU} ,    \GRD_{UD}    \right)  \geq   \RD_{ED}^\obs - \SRD_{ED} ,\quad 
\max \left(   \RD_{EU} ,   \GRD_{UD}    \right)  \geq  \sqrt{\RD_{ED}^\obs - \SRD_{ED}}.
$$
\end{proposition}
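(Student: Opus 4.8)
The plan is to specialize the decomposition in Proposition~\ref{lemma::CRD} to $K=2$ and then bound the single surviving term. For a binary confounder only the $k=1$ summand remains, so Proposition~\ref{lemma::CRD} gives $\CRD_{ED} = \RD_{ED}^\obs - \SRD_{ED} = \alpha_1\{\beta_1^*\pr(E=0) + \beta_1\pr(E=1)\}$. I would prove the product inequality by bounding the two factors separately: taking absolute values yields $\CRD_{ED} \le |\alpha_1|\cdot|\beta_1^*\pr(E=0) + \beta_1\pr(E=1)|$, and since $\pr(E=0)+\pr(E=1)=1$ the quantity $\beta_1^*\pr(E=0) + \beta_1\pr(E=1)$ is a convex combination of $\beta_1^*$ and $\beta_1$, so its absolute value is at most $\max(|\beta_1^*|,|\beta_1|) = \GRD_{UD}$. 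Because for a binary confounder $|\alpha_1| = \RD_{EU} = \GRD_{EU}$, this delivers $\RD_{ED}^\obs - \SRD_{ED} \le \GRD_{EU}\times\GRD_{UD}$. Note this bounds the \emph{signed} confounding risk difference, which is exactly what the statement asserts.

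Next I would derive the two Cornfield-type thresholds from this product bound using only the fact that $\RD_{EU}$ and $\GRD_{UD}$ are differences of probabilities, hence lie in $[0,1]$. For the low threshold, for any $a,b\in[0,1]$ one has $ab\le\min(a,b)$ (since $ab\le a$ and $ab\le b$), so $\RD_{ED}^\obs - \SRD_{ED} \le \RD_{EU}\times\GRD_{UD} \le \min(\RD_{EU},\GRD_{UD})$. For the high threshold, $ab\le\max(a,b)^2$ for nonnegative $a,b$, so $\RD_{ED}^\obs - \SRD_{ED} \le \max(\RD_{EU},\GRD_{UD})^2$; taking square roots gives $\max(\RD_{EU},\GRD_{UD}) \ge \sqrt{\RD_{ED}^\obs - \SRD_{ED}}$.

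The only genuinely substantive step is the convex-combination bound on $|\beta_1^*\pr(E=0) + \beta_1\pr(E=1)|$; everything else is elementary arithmetic of numbers in $[0,1]$, so I do not anticipate any real obstacle. I would, however, flag two points of care: the product inequality controls the signed quantity $\CRD_{ED}$ rather than its absolute value (which is what is wanted), and the square root in the high threshold implicitly presumes $\RD_{ED}^\obs \ge \SRD_{ED}$, as holds when confounding inflates an apparently causative effect.
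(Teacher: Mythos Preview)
Your proposal is correct and follows essentially the same route as the paper: both specialize Proposition~\ref{lemma::CRD} to $K=2$ and bound the single surviving term $\alpha_1\{\beta_1^*\pr(E=0)+\beta_1\pr(E=1)\}$ by the product $\GRD_{EU}\times\GRD_{UD}$, then read off the low and high thresholds from $ab\le\min(a,b)$ and $ab\le\max(a,b)^2$ for $a,b\in[0,1]$. The only cosmetic difference is that the paper bounds the convex combination via a sign argument (noting $\beta_1^*$ and $\beta_1$ cannot both be negative when $\CRD_{ED}\ge 0$ and $\RD_{EU}\ge 0$), whereas you use the triangle inequality $|w\beta_1^*+(1-w)\beta_1|\le\max(|\beta_1^*|,|\beta_1|)$ directly; your version is arguably cleaner and sidesteps any sign bookkeeping.
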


\begin{proof}[Proof of Proposition \ref{prop::binary-RD}.]
We have
\begin{eqnarray*}
\CRD_{ED} &=& \alpha_1 \{ \beta_{11} \pr(E=0) + \beta_{01} \pr(E=1) \} 
= \RD_{EU} \{   \RD_{UD|E=1} \pr(E=0) + \RD_{UD|E=0}\pr(E=1)    \} .
\end{eqnarray*}
Since $\CRD_{ED} \geq 0$ and $\RD_{EU} \geq 0$, we have 
$
 \RD_{UD|E=1} \pr(E=0) + \RD_{UD|E=0}\pr(E=1)  \geq 0.
 $ 
Therefore, $ \RD_{UD|E=1} $ and $ \RD_{UD|E=0}$ cannot both be negative, and thus we have 
$$
\RD_{UD|E=1} \pr(E=0) + \RD_{UD|E=0}\pr(E=1) <   \max(  \RD_{UD|E=1}  ,  \RD_{UD|E=0} ) = \GRD_{UD}  .
$$ 
Therefore,
$
 \CRD_{ED}  \leq  \RD_{EU}  \times   \GRD_{UD}   , 
$
which implies that
$
\min\left(    \RD_{EU} ,    \GRD_{UD}    \right)  \geq  \CRD_{ED} = \RD_{ED}^\obs - \SRD_{ED} ,$ and 
$\max \left(   \RD_{EU} ,   \GRD_{UD}    \right)  \geq  \sqrt{  \CRD_{ED}  }  = \sqrt{\RD_{ED}^\obs - \SRD_{ED}}.$
\end{proof}

\subsection{General Categorical Confounder}

For categorical confounder $U$, no simple form of the bounding factor is available, but we can still show that $\GRD_{EU}$ and $\GRD_{UD} $ must satisfy the following conditions:

\begin{proposition}\label{prop::RD-no-mono}
For a categorical confounder $U$, we have
\begin{eqnarray*}
&&\GRD_{EU} \geq  (\RD_{ED}^\obs -  \SRD_{ED} ) /(K-1), \\
&&\GRD_{UD} \geq (\RD_{ED}^\obs -  \SRD_{ED} )   /2,  \\
&& \max(\GRD_{EU},  \GRD_{UD}   )  \geq  \max\left\{  \sqrt{  (\RD_{ED}^\obs -  \SRD_{ED} )   /(K-1) } , (\RD_{ED}^\obs -  \SRD_{ED} ) /2    \right\} .
\end{eqnarray*}
\end{proposition}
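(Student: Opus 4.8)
The plan is to build all three inequalities on the single decomposition
$$
\CRD_{ED} = \RD_{ED}^\obs - \SRD_{ED} = \sum_{k=1}^{K-1} \alpha_k \{ \beta_k^* \pr(E=0) + \beta_k \pr(E=1) \}
$$
established in Proposition~\ref{lemma::CRD}, and then to read off each claim by bounding the right-hand side in a slightly different way. Applying the triangle inequality once yields the master bound
$$
\CRD_{ED} \leq \sum_{k=1}^{K-1} |\alpha_k| \left\{ |\beta_k^*| \pr(E=0) + |\beta_k| \pr(E=1) \right\},
$$
which holds irrespective of the signs of the summands. Each of the three inequalities will then follow by deciding which of the two factors, $|\alpha_k|$ on the one hand and $\{|\beta_k^*|,|\beta_k|\}$ on the other, to replace by its maximum.

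For the first inequality I would observe that $\beta_k^*$ and $\beta_k$ are differences of two probabilities and so lie in $[-1,1]$; hence the bracketed term is at most $\pr(E=0)+\pr(E=1)=1$, leaving $\CRD_{ED} \leq \sum_{k=1}^{K-1}|\alpha_k| \leq (K-1)\GRD_{EU}$, which rearranges to $\GRD_{EU}\geq \CRD_{ED}/(K-1)$. For the second inequality I would instead bound each $|\beta_k^*|$ and $|\beta_k|$ by $\GRD_{UD}$, so the master bound becomes $\CRD_{ED}\leq \GRD_{UD}\sum_{k=1}^{K-1}|\alpha_k|$. The crucial step here --- and essentially the only place where something beyond routine bounding is used --- is the $\ell^1$ estimate $\sum_{k=0}^{K-1}|\alpha_k|\leq 2$: since $\{\alpha_k\}$ is the difference of the two probability vectors $\pr(U=\cdot\mid E=1)$ and $\pr(U=\cdot\mid E=0)$, its positive and negative parts each sum to at most $1$ and, because the total sums to zero, are equal, so $\sum_k|\alpha_k|\leq 2$ (this is just twice the total variation distance). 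Thus $\sum_{k=1}^{K-1}|\alpha_k|\leq 2$, giving $\CRD_{ED}\leq 2\GRD_{UD}$, i.e.\ $\GRD_{UD}\geq \CRD_{ED}/2$.

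For the third inequality I would set $M=\max(\GRD_{EU},\GRD_{UD})$ and bound \emph{both} factors of the master bound by $M$ at once, obtaining $\CRD_{ED}\leq \sum_{k=1}^{K-1} M\{M\pr(E=0) + M\pr(E=1)\}=(K-1)M^2$ and hence $M\geq\sqrt{\CRD_{ED}/(K-1)}$, where the square root is real in the apparently-causative setting $\CRD_{ED}\geq 0$. Combining this with the second inequality, which already forces $M\geq\GRD_{UD}\geq\CRD_{ED}/2$, produces the stated maximum of the two lower bounds. I expect the total-variation estimate $\sum_k|\alpha_k|\leq 2$ driving the factor-of-two bound to be the only genuine subtlety; the rest are direct uses of the triangle inequality together with the fact that every $\beta$ is a difference of probabilities. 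Finally I would note that when $K=3$ the first and third lower bounds collapse to the common form $\sqrt{(\RD_{ED}^\obs-\SRD_{ED})/2}$, recovering the stated special case.
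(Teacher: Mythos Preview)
Your proof is correct and follows essentially the same route as the paper: both start from the decomposition $\CRD_{ED}=\sum_{k=1}^{K-1}\alpha_k\{\beta_k^*\pr(E=0)+\beta_k\pr(E=1)\}$, apply the triangle inequality, and then allocate the bounds $|\alpha_k|\leq\GRD_{EU}$, $|\beta_k^*|,|\beta_k|\leq\GRD_{UD}$, and $|\beta_k^*|,|\beta_k|\leq 1$ in the three different ways needed. Your total-variation observation $\sum_k|\alpha_k|\leq 2$ is exactly what the paper uses, only phrased more crisply; the paper writes it out termwise as $\sum_{k=1}^{K-1}|\alpha_k|\leq\sum_{k=1}^{K-1}\pr(U=k\mid E=1)+\sum_{k=1}^{K-1}\pr(U=k\mid E=0)\leq 2$. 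The paper additionally discusses when each inequality is attained (sharpness), which you omit, but the proposition as stated asks only for the inequalities, so your argument is complete.

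One small slip in your closing remark: when $K=3$ the \emph{first} bound becomes $\GRD_{EU}\geq\CRD_{ED}/2$, not $\sqrt{\CRD_{ED}/2}$; it is the first and \emph{second} bounds that coincide (giving $\min(\GRD_{EU},\GRD_{UD})\geq\CRD_{ED}/2$), while the third reduces to $\max(\GRD_{EU},\GRD_{UD})\geq\sqrt{\CRD_{ED}/2}$ because $\CRD_{ED}/2\leq 1$ forces $\sqrt{\CRD_{ED}/2}\geq\CRD_{ED}/2$. This does not affect the proof of the proposition itself.
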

When $K=3$ such as a three-level genetic confounder, these conditions reduce to
\begin{eqnarray*}
\min(\GRD_{EU} , \GRD_{UD} ) \geq  (\RD_{ED}^\obs -  \SRD_{ED} )  /2,\quad 
\max(\GRD_{EU} , \GRD_{UD} ) \geq \sqrt{ (\RD_{ED}^\obs -  \SRD_{ED} )  /2}.
\end{eqnarray*}

\begin{proof}[Proof of Proposition \ref{prop::RD-no-mono}.]
Since 
\begin{eqnarray*}
\CRD_{ED} &=& \Big|  \sum_{k=1}^{K-1} \alpha_k \{  \beta_k^* \pr(E=0) + \beta_k\pr(E=1)  \} \Big|   
\leq  \GRD_{EU}   \sum_{k=1}^{K-1}  |  \beta_k^* \pr(E=0) + \beta_k\pr(E=1)  |  \\
&\leq &   \GRD_{EU}\sum_{k=1}^{K-1}  \max( | \beta_k^*|, |  \beta_k|  )  
 \leq \GRD_{EU}(K-1),
\end{eqnarray*}
we have $\GRD_{EU}\geq \CRD_{ED}/(K-1)$. The equality is attainable if and only if (c1) $\alpha_k=\CRD_{ED}/(K-1)$, and $ \beta_k^* = \beta_k = 1$ for $k=1,\ldots,(K-1)$; or (c2) $\alpha_k=-1$, and $\beta_k^* = \beta_k = -1$ for $k=1,\ldots,K.$
The condition (c1) requires that the risk difference of the exposure $E$ on each category of $U$ to be the same as $\CRD_{ED}/(K-1)$, and the confounder $U$ is a perfect predictor of the disease $D$. Similar interpretation applies to condition (c2).

Since 
\begin{eqnarray*}
\CRD_{ED} &=&  \Big|  \sum_{k=1}^{K-1} \alpha_k \{  \beta_k^* \pr(E=0) + \beta_k\pr(E=1)  \} \Big|  
 \leq      \sum_{k=1}^{K-1}  | \alpha_k |   \max( | \beta_k^*|, |  \beta_k|  )    \leq \GRD_{UD} \sum_{k=1}^{K-1} |\alpha_k| \\
& \leq& \GRD_{UD} \sum_{k=1}^{K-1} \pr(U=k\mid E=1) + \GRD_{UD}  \sum_{k=1}^{K-1} \pr(U=k\mid E=0) 
\leq  2\GRD_{UD},
\end{eqnarray*}
the lower bound for $\GRD_{UD}$ is $\GRD_{UD}\geq \CRD_{ED}/2.$
The equality is attainable if and only if $\pr(U=0\mid E=0) = \pr(U=0\mid E=1)= 0, \pr(U=k\mid E=1)  \pr(U=k\mid E=0) = 0$ for $k=1,...,(K-1)$, and $\beta_k^* = \beta_k = \pm \CRD_{ED}/2$ with the sign the same as the sign of $\alpha_k$.

Since $\CRD_{ED} \leq (K-1) \GRD_{EU}\GRD_{UD} \leq (K-1)\max^2(\GRD_{EU},\GRD_{UD})$, we have $\max(\GRD_{EU},\GRD_{UD})\geq  \sqrt{  \CRD_{ED}/(K-1) }$, with the equality attainable if and only if $\alpha_k = \beta_k^*  = \beta_k = \pm  \sqrt{  \CRD_{ED}/(K-1)  }$ for $k=1,\ldots,K-1$. Due to the constraint $\sum_{k=1}^{K-1} | \alpha_k | \leq 2$ discussed above, the equality is attainable if and only if $(K-1)\sqrt{  \CRD_{ED}/(K-1)  } \leq 2$ or $(K-1) \CRD_{ED}\leq 4$. When $(K-1)\CRD_{ED} > 4$, $\GRD_{UD}$ can attain its lower bound $\CRD_{ED}$ with $\sum_{k=1}^{K-1} |\alpha_k| = 2.$ Therefore, $\GRD_{EU}$ can attain its lower bound $2/(K-1)$, which, in this case, is smaller than $\CRD_{ED}/2.$
In summary, the lower bound for $\max(\GRD_{EU},\GRD_{UD})$ is $\max(\GRD_{EU},\GRD_{UD}) \geq \sqrt{   \CRD_{ED} / (K-1)  },$ if $(K-1)\CRD_{ED} \leq  4$, and $\max(\GRD_{EU},\GRD_{UD}) \geq \CRD_{ED} / 2$, if $(K-1)\CRD_{ED} > 4$. Equivalently, we have 
$
\max(\GRD_{EU},\GRD_{UD}) \geq \max\left\{      \sqrt{ \CRD_{ED}/(K-1) } , \CRD_{ED}/2 \right\} .
$
\end{proof}

For the Cornfield conditions for the risk difference, sharper conditions can be obtain by imposing the monotonicity assumption that
$\alpha_k \geq 0$ for $k=1, \cdots, (K-1)$. It requires that each non-zero category of $U$ is more prevalent under exposure, which is naturally satisfied for binary confounder. 
Under the monotonicity assumption, the conditions for the risk difference can be strengthened.

\begin{proposition}\label{prop::RD-mono}
For a categorical confounder under monotonicity, we have that
\begin{eqnarray*}
&&\GRD_{EU}    \geq (\RD_{ED}^\obs -  \SRD_{ED} )/(K-1), \\
&&\GRD_{UD} \geq  \RD_{ED}^\obs -  \SRD_{ED} ,  \\
&&\max(\GRD_{EU}, \GRD_{UD} ) \geq  \max\left\{   \sqrt{  (\RD_{ED}^\obs -  \SRD_{ED} ) /(K-1) },   \RD_{ED}^\obs -  \SRD_{ED} \right\}  .
\end{eqnarray*}
\end{proposition}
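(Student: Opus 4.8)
The plan is to derive all three inequalities from the decomposition established in Proposition \ref{lemma::CRD}, namely $\CRD_{ED} = \sum_{k=1}^{K-1}\alpha_k\{\beta_k^*\pr(E=0) + \beta_k\pr(E=1)\}$, working in the apparently causative regime where $\CRD_{ED} = \RD_{ED}^\obs - \SRD_{ED} \geq 0$ so that the outer absolute value may be dropped. The one place where monotonicity will enter, and the entire source of the improvement over Proposition \ref{prop::RD-no-mono}, is a sharper bound on $\sum_{k=1}^{K-1}|\alpha_k|$. First I would note that because the conditional probabilities of $U$ sum to one under each exposure level, $\sum_{k=0}^{K-1}\alpha_k = 0$, whence $\sum_{k=1}^{K-1}\alpha_k = -\alpha_0 = \pr(U=0\mid E=0) - \pr(U=0\mid E=1) \leq 1$. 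Under the monotonicity assumption $\alpha_k \geq 0$ for $k\geq 1$, this gives $\sum_{k=1}^{K-1}|\alpha_k| = \sum_{k=1}^{K-1}\alpha_k \leq 1$, a factor-of-two improvement over the generic bound $\sum_{k\geq 1}|\alpha_k| \leq 2$ used in Proposition \ref{prop::RD-no-mono}.

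For the first inequality I would factor $\GRD_{EU} = \max_{k\geq 1}|\alpha_k|$ out of the decomposition and control the remaining weighted average by its endpoints: since $\pr(E=0)+\pr(E=1)=1$, each bracketed term obeys $|\beta_k^*\pr(E=0)+\beta_k\pr(E=1)| \leq \max(|\beta_k^*|,|\beta_k|) \leq 1$ because the $\beta$'s are differences of probabilities. Summing $K-1$ such terms yields $\CRD_{ED} \leq (K-1)\GRD_{EU}$, i.e. $\GRD_{EU} \geq \CRD_{ED}/(K-1)$; this matches the no-monotonicity bound, with monotonicity providing no gain here. For the second inequality I would instead factor out $\GRD_{UD}$, using $\max(|\beta_k^*|,|\beta_k|) \leq \GRD_{UD}$ to obtain $\CRD_{ED} \leq \GRD_{UD}\sum_{k=1}^{K-1}|\alpha_k|$, and then apply the monotone constraint $\sum_{k\geq 1}|\alpha_k|\leq 1$ to conclude $\GRD_{UD} \geq \CRD_{ED}$, which improves the no-monotonicity bound $\GRD_{UD}\geq \CRD_{ED}/2$.

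For the maximum inequality I would combine two separate estimates. Bounding both $|\alpha_k|$ and $\max(|\beta_k^*|,|\beta_k|)$ by their common maximum gives $\CRD_{ED} \leq (K-1)\GRD_{EU}\GRD_{UD} \leq (K-1)\max^2(\GRD_{EU},\GRD_{UD})$, hence $\max(\GRD_{EU},\GRD_{UD}) \geq \sqrt{\CRD_{ED}/(K-1)}$; separately, the second inequality already forces $\max(\GRD_{EU},\GRD_{UD}) \geq \GRD_{UD} \geq \CRD_{ED}$. Taking the larger of the two lower bounds produces the stated $\max\{\sqrt{\CRD_{ED}/(K-1)},\,\CRD_{ED}\}$, and I would record that the crossover between the two branches occurs at $(K-1)\CRD_{ED}=1$, consistent with the monotone constraint $\sum_{k\geq 1}\alpha_k \leq 1$.

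The hard part will not be any individual computation but making transparent exactly how monotonicity sharpens the constraint on $\sum_{k\geq 1}\alpha_k$: every improvement over Proposition \ref{prop::RD-no-mono} flows from replacing $2$ by $1$, so the argument must isolate that $\alpha_k\geq 0$ together with $\sum_k\alpha_k = 0$ is precisely what yields $\sum_{k\geq 1}\alpha_k \leq 1$. A secondary point requiring care is the sign convention, since the chain of inequalities needs $\CRD_{ED}\geq 0$; the preventive case would be handled by the usual relabeling $\bar E = 1 - E$ as elsewhere in the paper.
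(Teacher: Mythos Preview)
Your proposal is correct and follows essentially the same approach as the paper: both arguments start from the decomposition $\CRD_{ED}=\sum_{k\geq 1}\alpha_k\{\beta_k^*\pr(E=0)+\beta_k\pr(E=1)\}$, bound $|\alpha_k|$ by $\GRD_{EU}$ and $\max(|\beta_k^*|,|\beta_k|)$ by $\GRD_{UD}$, and exploit that under monotonicity $\sum_{k\geq 1}|\alpha_k|=\sum_{k\geq 1}\alpha_k=-\alpha_0\leq 1$ (versus $\leq 2$ otherwise) to sharpen the $\GRD_{UD}$ and maximum bounds. The paper additionally records the attainability conditions for each inequality, but these are not required for the proposition as stated.
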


\begin{proof}Proof of Proposition \ref{prop::RD-mono}.
The bound for $\GRD_{EU}$ remains the same.
Since 
$$
\CRD_{ED}  = \Big|  \sum_{k=1}^{K-1} \alpha_k \{  \beta_k^* \pr(E=0) + \beta_k
(E=1)  \} \Big|  
\leq \GRD_{UD} \sum_{k=1}^{K-1} |\alpha_k| \leq \GRD_{UD} (-\alpha_0)  
\leq  \GRD_{UD},
$$
the lower bound for $\GRD_{UD}$ is $\GRD_{UD}\geq \CRD_{ED}$
The equality is attainable if and only if $\alpha_0 = -1$ and $\beta_k^*= \beta_k = \CRD_{ED}$ for $k=1,\ldots,K-1$. The condition requires that the presence or absence of the confounder $U$ is perfectly predictive to the exposure $E$, and each category of $U$ is equally predictive to the disease $D$.

Since $\CRD_{ED} \leq (K-1) \GRD_{EU}\GRD_{UD} \leq (K-1)\max^2(\GRD_{EU},\GRD_{UD})$, we have $\max(\GRD_{EU},\GRD_{UD})\geq \sqrt{  \CRD_{ED}/(K-1)  }$, with the equality attainable if and only if $\alpha_k = \beta_k^*  = \beta_k = \pm \sqrt{  \CRD_{ED}/(K-1)  }$ for $k=1,\ldots,K-1$. Due to the constraint $\sum_{k=1}^{K-1}  \alpha_k = -\alpha_0  \leq 1$ discussed above, the equality is attainable if and only if $(K-1)\sqrt{  \CRD_{ED}/(K-1)  } \leq 1$ or $(K-1) \CRD_{ED}\leq 1$. When $(K-1)\CRD_{ED} > 1$, $\GRD_{UD}$ can attain its lower bound $\CRD_{ED}$ with $\sum_{k=1}^{K-1} \alpha_k = 1.$ Therefore, $\GRD_{EU}$ can attain its lower bound $1/(K-1)$, which, in this case, is smaller than $\CRD_{ED}.$
In summary, the lower bound for $\max(\GRD_{EU},\GRD_{UD})$ is $\max(\GRD_{EU},\GRD_{UD}) \geq \sqrt{  \CRD_{ED} / (K-1)  },$ if $(K-1)\CRD_{ED} \leq  1$, and $\max(\GRD_{EU},\GRD_{UD}) \geq \CRD_{ED} $, if $(K-1)\CRD_{ED} > 1$. Equivalently, we have 
$
\max(\GRD_{EU},\GRD_{UD}) \geq \max\left\{    \sqrt{  \CRD_{ED}/(K-1)  } , \CRD_{ED} \right\} .
$
\end{proof}

The results in Propositions \ref{lemma::CRD} to \ref{prop::RD-mono} generalize previous results \citep{Ding::2014} from the null hypothesis of no effect ($\SRD_{ED} = 0$) to alternative hypotheses ($\SRD_{ED}$ arbitrary).

\section{A Bounding Factor for Rare Time-to-Event Outcome on the Hazard Ratio Scale}

Let $f, S, \lambda$ be the probability density, survival function and hazard function of a positive continuous outcome $T$. The outcome is rare in the sense that $\pr(T\leq \mathcal{T}) $ is not much greater than $ 0$, where $\mathcal{T}$ is the time point of the end our research of interest. In the following, we will always make the rare outcome assumption. Although $f, S, \lambda$ are defined on the whole positive real line, our interest only within interval $[0,\mathcal{T}]$. Let $U$ be another random variable, and $f(t\mid u), S(t\mid u), \lambda(t\mid u)$ are the conditional probability density, survival function, and hazard function of $T$ given $U$. The following lemma is useful throughout our discussion.

\begin{lemma}
\label{lemma::rare-hr}
If $T$ is a rare time-to-event outcome, we have the following approximation:
$$
\lambda(t) \approx \int \lambda(t\mid u) F(du).
$$
\end{lemma}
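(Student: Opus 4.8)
The plan is to start from the definition of the marginal hazard, $\lambda(t) = f(t)/S(t)$, and reduce the claim to an exact survival-weighted averaging identity that collapses to the stated approximation once the rare-outcome assumption is applied. The whole argument is essentially the law of total probability plus the elementary relation between density, survival, and hazard, with the ``$\approx$'' entering only at the very last step.

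First I would write the marginal density and marginal survival as mixtures over $U$ against its marginal distribution $F$:
\begin{eqnarray*}
f(t) = \int f(t\mid u) F(du), \qquad S(t) = \int S(t\mid u) F(du).
\end{eqnarray*}
Both hold exactly, since conditioning on $U$ and integrating against $F$ recovers the unconditional quantities. Next I would use $f(t\mid u) = \lambda(t\mid u) S(t\mid u)$, valid for each $u$ by the definition of the conditional hazard, and substitute it into the numerator. This exhibits $\lambda(t)$ as a weighted average of the conditional hazards:
\begin{eqnarray*}
\lambda(t) = \frac{f(t)}{S(t)} = \frac{\int \lambda(t\mid u) S(t\mid u) F(du)}{\int S(t\mid u) F(du)},
\end{eqnarray*}
where the weights are proportional to $S(t\mid u) F(du)$.

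The final step is to invoke the rare-outcome assumption. For $t \in [0,\mathcal{T}]$ we have $S(t\mid u) = 1 - \pr(T\leq t\mid u) \geq 1 - \pr(T\leq \mathcal{T}\mid u)$, and because $\pr(T\leq \mathcal{T})$ is close to zero, $S(t\mid u)\approx 1$ for all $u$ and all $t\leq \mathcal{T}$. Replacing $S(t\mid u)$ by $1$ in both the numerator and denominator of the displayed ratio collapses the weights to $F(du)$ and delivers $\lambda(t)\approx \int \lambda(t\mid u) F(du)$, as claimed.

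The main obstacle is making this last replacement precise rather than merely heuristic: the survival-weighted average differs from the $F$-weighted average by an error controlled by how far $S(t\mid u)$ deviates from $1$, which is of order $\pr(T\leq\mathcal{T})$. I would make this quantitative by bounding $|S(t\mid u)-1|\leq \pr(T\leq \mathcal{T}\mid u)$ and noting that the conditional hazards stay bounded on $[0,\mathcal{T}]$, so the discrepancy between the two averages is negligible under the rare-outcome regime. Everything preceding that estimate is exact algebra together with the law of total probability.
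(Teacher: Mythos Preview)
Your proposal is correct and follows essentially the same route as the paper: write $\lambda(t)=f(t)/S(t)$ as the ratio $\int \lambda(t\mid u)S(t\mid u)F(du)\big/\int S(t\mid u)F(du)$ via the law of total probability and $f(t\mid u)=\lambda(t\mid u)S(t\mid u)$, then invoke $S(t\mid u)\approx 1$ under the rare-outcome assumption to collapse the weights. Your additional remarks on bounding the discrepancy go slightly beyond the paper's one-line justification, but the argument is the same.
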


\begin{proof}
[Proof of Lemma \ref{lemma::rare-hr}.] Similar to the case with discrete $U$ \citep{Vanderweele::2013}, we have $S(t\mid u) \approx 1$ for rare outcome, and therefore
\begin{eqnarray*}
\lambda(t) ={ f(t)\over S(t)} = \frac{ \int \lambda(t\mid u) S(t\mid u) F(du)  }{ \int S(t\mid u)  F(du) } 
\approx  \frac{ \int \lambda(t\mid u)   F(du)  }{ \int  F(du) }  =  \int \lambda(t\mid u) F(du).  \\
\end{eqnarray*}
\end{proof}
 
Lemma \ref{lemma::rare-hr} essentially allows ``Law of Total Probability'' type of calculation for the hazard function with rare outcome.

In order to introduce the new bounding factor for hazard ratio, we need more formal notation. Define the potential outcomes for $T$ as $T(1)$ and $T(0)$ with hazard functions $\lambda^{(1)}(t)$ and $\lambda^{(0)}(t)$ and conditional hazard functions can be defined intuitively as $\lambda^{(1)}(t\mid \cdot)$ and $\lambda^{(0)}(t\mid \cdot)$. We define $\lambda_t^*(u) = \lambda(t\mid E=1, U=u)$ and $\lambda_t(u) = \lambda(t\mid E=0, U=u)$ as the conditional hazard functions of $T$ for the exposed and unexposed units within strata $U=u$, respectively. We define $\HR_{UT|E=1} (t)= \max_u \lambda_t^*(u) /  \min_u \lambda_t^*(u)$ as the maximal hazard ratio function of the confounder $U$ on the outcome $T$ for exposed units, $\HR_{UT|E=0} (t)= \max_u \lambda_t(u) /  \min_u \lambda_t(u)$ for unexposed, and their maximum, denoted by $ \HR_{UT}(t) = \max \{\HR_{UT|E=1} (t) , \HR_{UT|E=0} (t) \} $, as the maximal hazard ratio function of the confounder $U$ on the outcome $T$. Note that the hazard ratios are time-dependent.

If the exposure and the outcome are unconfounded given $U$ and the observed covariates $C$ (which is omitted in conditional probablities for simplicity), Lemma \ref{lemma::rare-hr} allows us to write the true causal hazard ratios for the exposed, unexposed, and the whole population as
\begin{eqnarray*}
\SHR_{ET+} (t)&=&   \frac{\lambda^{(1)}(t\mid E=1)}{\lambda^{(0)}(t\mid E=1)}   
\approx \frac{   \int \lambda_t^*(u)  F_1(du)   }{   \int \lambda_t(u)  F_1(du)     } ,\\
\SHR_{ET-} (t)&=&   \frac{\lambda^{(1)}(t\mid E=0)}{\lambda^{(0)}(t\mid E=0)}   
\approx \frac{   \int \lambda_t^*(u)  F_0(du)   }{   \int \lambda_t(u)  F_0(du)     } ,\\
\SHR_{ET} (t)&=&   \frac{\lambda^{(1)}(t)}{\lambda^{(0)}(t )}   
\approx \frac{   \int \lambda_t^*(u)  F(du)   }{   \int \lambda_t(u)  F(du)     } ,
\end{eqnarray*}
and the observed harzard ratio as
$$
\HR_{ET} (t)= \frac{ \lambda(t\mid E=1) }{ \lambda(t\mid E=0)  }
\approx  \frac{   \int \lambda_t^*(u)  F_1(du)   }{   \int \lambda_t(u)  F_0(du)     }.
$$
With categorical $U$ taking values $0,1,\ldots, K-1$, the true causal hazard ratios can be approximated by the following standardized hazard ratios:
\begin{eqnarray*}
\SHR_{ET+} (t)&\approx&  \frac{   \sum_{k=0}^{K-1} \lambda_t^*(k) \pr(U=k\mid E=1)     }{ \sum_{k=0}^{K-1} \lambda_t(k) \pr(U=k\mid E=1) } ,\\
\SHR_{ET-} (t)&\approx&   \frac{   \sum_{k=0}^{K-1} \lambda_t^*(k) \pr(U=k\mid E=0)     }{ \sum_{k=0}^{K-1} \lambda_t(k) \pr(U=k\mid E=0) } ,\\
\SHR_{ET} (t)&\approx&   \frac{   \sum_{k=0}^{K-1} \lambda_t^*(k) \pr(U=k)     }{ \sum_{k=0}^{K-1} \lambda_t(k) \pr(U=k) }
\end{eqnarray*}
The confounding hazard ratios are defined as 
$$
\CHR_{ET+}(t) = \frac{\HR_{ET} (t)}{\SHR_{ET+} (t)},\quad
\CHR_{ET-}(t) = \frac{\HR_{ET} (t)}{\SHR_{ET-} (t)},\quad
\CHR_{ET}(t) = \frac{\HR_{ET} (t)}{\SHR_{ET} (t)}.
$$

Analogous to the results for the relative risk, we have the following propositions for the hazard ratio. The proofs are straightforward if we replace $\{ r(\cdot), r^*(\cdot) \}$ in the proofs for the relative risk by $\{  \lambda_t(\cdot), \lambda_t^*(u) \}.$

\begin{proposition}
\label{prop::weighted-hr}
For rare time-to-event outcome, we approximately have
\begin{eqnarray*}
\SHR_{ET}(t)  &=& w_t \SHR_{ET+}(t) + (1-w_t) \SHR_{ET-}, \\ 
1/\CHR_{ET}(t) &=& w_t/ \CHR_{ET+}(t) + (1-w_t)/ \CHR_{ET-}(t)  ,
\end{eqnarray*}
where $w_t$ is a weight between zero and one:
$$
w_t = \frac{ f   \int   \lambda_t(u) F_1(du)    }{  f  \int   \lambda_t(u) F_1(du)  + (1-f)  \int   \lambda_t(u) F_0(du) } \in [0,1] .
$$
\end{proposition}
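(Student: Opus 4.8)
The plan is to reproduce the argument of Proposition~\ref{prop::average} essentially verbatim, the only change being that the outcome probabilities $r(u)=\pr(D=1\mid E=0,U=u)$ and $r^*(u)=\pr(D=1\mid E=1,U=u)$ are replaced throughout by the conditional hazards $\lambda_t(u)$ and $\lambda_t^*(u)$, with Lemma~\ref{lemma::rare-hr} supplying the ``law of total probability'' representation of the marginal hazards. Concretely, I would first invoke Lemma~\ref{lemma::rare-hr} to write $\lambda^{(1)}(t)\approx\int\lambda_t^*(u)F(du)$ and $\lambda^{(0)}(t)\approx\int\lambda_t(u)F(du)$, so that the whole-population causal hazard ratio admits the integral form $\SHR_{ET}(t)\approx\{\int\lambda_t^*(u)F(du)\}/\{\int\lambda_t(u)F(du)\}$ already recorded above. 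All subsequent equalities then hold up to this same rare-outcome approximation.

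Next I would decompose the marginal law of $U$ by total probability, $F(du)=fF_1(du)+(1-f)F_0(du)$, and substitute it into both the numerator and the denominator of $\SHR_{ET}(t)$. Writing the common denominator as $D_t=f\int\lambda_t(u)F_1(du)+(1-f)\int\lambda_t(u)F_0(du)$, this splits $\SHR_{ET}(t)$ into two pieces. I would factor the first as $\{f\int\lambda_t(u)F_1(du)/D_t\}\times\{\int\lambda_t^*(u)F_1(du)/\int\lambda_t(u)F_1(du)\}=w_t\,\SHR_{ET+}(t)$, and the second analogously as $(1-w_t)\,\SHR_{ET-}(t)$, which is exactly the claimed decomposition with $w_t$ as defined. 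That $w_t\in[0,1]$ is immediate, since it is a ratio of a nonnegative quantity to a sum containing that quantity together with another nonnegative term.

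For the harmonic-average identity, I would use $\CHR_{ET}(t)=\HR_{ET}(t)/\SHR_{ET}(t)$ and the analogous definitions for the exposed and unexposed confounding hazard ratios, so that $1/\CHR_{ET}(t)=\SHR_{ET}(t)/\HR_{ET}(t)$. Dividing the weighted-average identity for $\SHR_{ET}(t)$ through by the common factor $\HR_{ET}(t)$ then converts the arithmetic average of the $\SHR$'s into the stated weighted average of $1/\CHR_{ET+}(t)$ and $1/\CHR_{ET-}(t)$; this is a one-line consequence of the first identity and needs no further work.

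I do not expect a genuine obstacle, since the decomposition is a purely algebraic identity once the integral representations are in hand. The only point demanding care is that every equality is approximate, inheriting the ``$\approx$'' of Lemma~\ref{lemma::rare-hr}, and that the rare-outcome approximation must be applied consistently to numerator and denominator so that the survival factors $S(t\mid u)$ cancel to leading order and the weight $w_t$ emerges as a ratio of plain hazard integrals rather than survival-weighted ones.
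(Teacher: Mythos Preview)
Your proposal is correct and follows exactly the paper's approach: the paper does not write out a separate proof for Proposition~\ref{prop::weighted-hr} but simply states that the proofs for the hazard-ratio results are obtained by replacing $\{r(\cdot),r^*(\cdot)\}$ in the relative-risk proofs by $\{\lambda_t(\cdot),\lambda_t^*(\cdot)\}$, which is precisely what you do. Your added remarks about invoking Lemma~\ref{lemma::rare-hr} to justify the integral representations and about all equalities being approximate are appropriate and make the argument self-contained.
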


Define the time-varying bounding factor as 
$$
\BF_U(t) = { \GRR_{EU}\times \HR_{UT}(t) \over   \GRR_{EU} + \HR_{UT}(t)  - 1},
$$
which is also time-dependent. The confounding hazard ratios can be bounded by the bounding factor, as shown in the following proposition.

\begin{proposition}
\label{prop::chr-bound}
For rare time-to-event outcome, we approximately have
$$
\CHR_{ET+}(t) \leq  \BF_U(t) ,\quad
\CHR_{ET-}(t) \leq  \BF_U(t) ,\quad
\CHR_{ET}(t) \leq  \BF_U(t) .
$$
\end{proposition}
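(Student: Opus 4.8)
The plan is to reduce this proposition entirely to Proposition \ref{prop::bound-CRR} by means of the rare-outcome approximation. First I would invoke Lemma \ref{lemma::rare-hr}, which lets the hazard function obey a ``law of total probability'' type decomposition, so that every hazard ratio admits an integral representation identical in form to its relative-risk counterpart, with the outcome probabilities $r(u), r^*(u)$ replaced throughout by the conditional hazard functions $\lambda_t(u), \lambda_t^*(u)$. Concretely, under the ignorability assumption together with the rare-outcome approximation, the confounding hazard ratios simplify to
$$
\CHR_{ET+}(t) \approx \frac{\int \lambda_t(u) F_1(du)}{\int \lambda_t(u) F_0(du)}, \qquad
\CHR_{ET-}(t) \approx \frac{\int \lambda_t^*(u) F_1(du)}{\int \lambda_t^*(u) F_0(du)},
$$
which are precisely $\CRR_{ED+}$ and $\CRR_{ED-}$ under the substitutions $r \mapsto \lambda_t$ and $r^* \mapsto \lambda_t^*$.

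Second, I would run the argument of Proposition \ref{prop::bound-CRR} verbatim on $\CHR_{ET+}(t)$. That is, I would rewrite it as the relative risk of a binary confounder taking the two values $\max_u \lambda_t(u)$ and $\min_u \lambda_t(u)$, introduce the analogous weights $w_1, w_0$ and their ratio $\Gamma = w_1/w_0$, and bound $\Gamma \leq \max_u \GRR_{EU}(u) = \GRR_{EU}$ using the Radon--Nikodym derivative exactly as before. Then Lemma \ref{lemma::basic-factorial-derivative} (monotonicity in $w_1$) together with Lemma \ref{lemma::cornfield-increasing} (monotonicity of the bounding factor in both arguments) yield
$$
\CHR_{ET+}(t) \leq \frac{\GRR_{EU} \times \HR_{UT|E=0}(t)}{\GRR_{EU} + \HR_{UT|E=0}(t) - 1} \leq \BF_U(t),
$$
the last step using $\HR_{UT|E=0}(t) \leq \HR_{UT}(t)$. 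The identical computation applied to $\CHR_{ET-}(t)$, now invoking $\HR_{UT|E=1}(t)$, gives $\CHR_{ET-}(t) \leq \BF_U(t)$.

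Finally, for the whole population I would appeal to Proposition \ref{prop::weighted-hr}, which expresses $1/\CHR_{ET}(t)$ as a convex combination of $1/\CHR_{ET+}(t)$ and $1/\CHR_{ET-}(t)$ with weight $w_t \in [0,1]$. Since each of these two reciprocals is at least $1/\BF_U(t)$, so is their average, whence $\CHR_{ET}(t) \leq \BF_U(t)$, completing the proof.

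The main obstacle is not the algebra---which is a direct transcription of the relative-risk proof---but rather justifying the integral representations of the hazard ratios themselves. These hold only approximately and only under the rare-outcome assumption, because the hazard function does \emph{not} decompose over strata of $U$ by the ordinary law of total probability: the survival-weighted identity $\lambda(t) = \int \lambda(t\mid u) S(t\mid u) F(du) \big/ \int S(t\mid u) F(du)$ must have its $S(t\mid u)$ weights shown negligible, which is exactly the content of Lemma \ref{lemma::rare-hr} via the approximation $S(t\mid u) \approx 1$. Once this approximation is in place, the proposition follows mechanically from Proposition \ref{prop::bound-CRR}.
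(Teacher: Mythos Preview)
Your proposal is correct and takes essentially the same approach as the paper. The paper's own proof is in fact just the one-sentence remark that ``the proofs are straightforward if we replace $\{ r(\cdot), r^*(\cdot) \}$ in the proofs for the relative risk by $\{ \lambda_t(\cdot), \lambda_t^*(u) \}$,'' and your proposal spells out exactly this substitution---invoking Lemma~\ref{lemma::rare-hr} for the integral representations, rerunning the argument of Proposition~\ref{prop::bound-CRR}, and using Proposition~\ref{prop::weighted-hr} for the convex-combination step---so you have simply made explicit what the paper leaves implicit.
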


\begin{proposition}
\label{prop::cornfield-hr}
The implied Cornfield conditions for the hazard ratio from Proposition \ref{prop::chr-bound} are
\begin{eqnarray*}
\GRR_{EU} &\geq & \max_t \CHR_{ET}(t),\\
\HR_{UT}(t) &\geq & \CHR_{ET}(t),\\
\max\{ \GRR_{EU},   \HR_{UT}(t)  \} &\geq &   \CHR_{ET}(t) + \sqrt{\CHR_{ET}(t) \{   \CHR_{ET}(t) - 1\}   } .
\end{eqnarray*}
\end{proposition}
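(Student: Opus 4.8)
The plan is to mirror the proof of Proposition~\ref{prop::cornfield-conditions}, applying the bound from Proposition~\ref{prop::chr-bound} at each fixed time $t$ and exploiting the monotonicity of the bounding factor established in Lemma~\ref{lemma::cornfield-increasing}. Fix $t$, so that Proposition~\ref{prop::chr-bound} gives $\CHR_{ET}(t) \leq \GRR_{EU}\HR_{UT}(t)/\{\GRR_{EU}+\HR_{UT}(t)-1\}$, and recall that by Lemma~\ref{lemma::cornfield-increasing} the right-hand side is increasing in each of $\GRR_{EU}$ and $\HR_{UT}(t)$ whenever both exceed $1$.

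For the two low thresholds I would push each argument to infinity in turn. Letting $\HR_{UT}(t)\to\infty$ drives the bounding factor up to its limit $\GRR_{EU}$, yielding $\CHR_{ET}(t)\leq \GRR_{EU}$ at each $t$; since $\GRR_{EU}$ carries no time index while the left-hand side does, this holds for every $t$, so taking the supremum over $t$ on the left gives the first conclusion $\GRR_{EU}\geq \max_t \CHR_{ET}(t)$. Symmetrically, letting $\GRR_{EU}\to\infty$ at fixed $t$ drives the bounding factor up to $\HR_{UT}(t)$, giving the pointwise second conclusion $\HR_{UT}(t)\geq \CHR_{ET}(t)$.

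For the high threshold I would, at fixed $t$, set $m=\max\{\GRR_{EU},\HR_{UT}(t)\}$ and invoke Lemma~\ref{lemma::cornfield-increasing} to replace both arguments of the bounding factor by their common upper bound $m$, which only enlarges it, so that $\CHR_{ET}(t)\leq m^2/(2m-1)$. Rearranging gives the quadratic inequality $m^2-2\CHR_{ET}(t)\,m+\CHR_{ET}(t)\geq 0$, whose larger root—the relevant one since $m>1$—yields $m\geq \CHR_{ET}(t)+\sqrt{\CHR_{ET}(t)\{\CHR_{ET}(t)-1\}}$, which is precisely the stated threshold. The main obstacle here is not computational, since these steps copy the relative-risk argument almost verbatim; rather it is the bookkeeping of the asymmetry between the two low thresholds. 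Because $\GRR_{EU}$ is time-invariant whereas $\HR_{UT}(t)$ and $\CHR_{ET}(t)$ both vary with $t$, the bound on $\GRR_{EU}$ must hold uniformly over $t$ and hence involves the $\max_t$, while the bound on $\HR_{UT}(t)$ and the high threshold are stated pointwise in $t$; keeping that distinction straight is the only subtlety in transplanting the earlier proof to the hazard-ratio setting.
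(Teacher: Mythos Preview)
Your proposal is correct and mirrors exactly what the paper does: the paper gives no separate proof for this proposition but states that the hazard-ratio results follow from the relative-risk proofs by replacing $\{r(\cdot),r^*(\cdot)\}$ with $\{\lambda_t(\cdot),\lambda_t^*(\cdot)\}$, which is precisely the transplant you carry out from Proposition~\ref{prop::cornfield-conditions}. Your additional remark on the time-invariance of $\GRR_{EU}$ versus the time-dependence of $\HR_{UT}(t)$, which explains why only the first inequality carries a $\max_t$, is a point the paper leaves implicit.
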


If a proportional hazards model \citep{Cox::1972} for the outcome is used as is often the case in practice, all the above exposure-outcome hazard ratio reduce to a constant $\HR_{ET}(t) = \HR_{ET}$.
The above discussion works well for an exposure that is apparently causative at time $t$ on the harzard ratio scale. If at some time point $t$, the exposure is apparently preventive, then the above discussion needs to be modified. To be more specific, we need to modify the definition of $\GRR_{EU}$ as in Section \ref{sec::preventive-exposure}, and the confounding hazard ratios above are replaced by their reciprocals. 
Likewise similar results on the hazard difference scale hold as those on the risk difference scale in eAppendix A.\ref{sec::rd-rr} provided that the outcome is relatively rare.

\section{A Bounding Factor for General Nonnegative Outcomes}

The discussion above assumes a binary outcome $D$, and in fact all the proofs only use the property that $r(u)$ and $r^*(u)$ are nonnegative. Therefore, the bounding factor also applies to any nonnegative outcomes (counts, continuous positive outcome, etc), if we modify the definitions of $r(u), r^*(u)$, and $\RR_{UD}$ in the following way. For general nonnegative outcomes, we define $r^*(u) = \E(D\mid E=1, U=u) $ and $r(u) = \E(D\mid E=0, U=u)$ as the expectations of the outcome within stratum $U=u$ with and without exposure. Define $\MR_{UD|E=1} = \max_u r^*(u) / \min_u r^*(u)$ and $\MR_{UD|E=0} = \max_u r(u) / \min_u r(u)$ as the mean ratios of $U$ on $D$ with and without exposure, and $\MR_{UD} = \max( \MR_{UD|E=1}, \MR_{UD=0})$ as the maximum of these two mean ratios. Note that when $D$ is binary, $r(u)$ and $r^*(u)$ reduce to probabilities, and the mean ratios reduce to the relative risks. 

The observed mean ratio of the exposure on the outcome is
$$
\MR_{ED} = \frac{   \int \E(D\mid E=1, U=u) F_1(du)    }{  \int \E(D\mid E=0, U=u)  F_0(du) } = \frac{  \int r^*(u)  F_1(du)  }{  \int r(u) F_0(du) } . 
$$
The true causal mean ratio of the exposure on the outcome for exposed is
$$
\SMR_{ED+} =    \frac{   \int \E(D\mid E=1, U=u) F_1(du)    }{  \int \E(D\mid E=0, U=u)  F_1(du) } = \frac{  \int r^*(u)  F_1(du)  }{  \int r(u) F_1(du) },
$$
the true causal mean ratio of the exposure on the outcome for unexposed is
$$
\SMR_{ED+} =    \frac{   \int \E(D\mid E=1, U=u) F_0(du)    }{  \int \E(D\mid E=0, U=u)  F_0(du) } = \frac{  \int r^*(u)  F_0(du)  }{  \int r(u) F_0(du) },
$$
and the true causal mean ratio of the exposure on the outcome for the whole population is
$$
\SMR_{ED+} =    \frac{   \int \E(D\mid E=1, U=u) F(du)    }{  \int \E(D\mid E=0, U=u)  F(du) } = \frac{  \int r^*(u)  F(du)  }{  \int r(u) F(du) }.
$$

Define the bounding factor as
$$
\BF_U =  { \GRR_{EU}\times \MR_{UD} \over  \GRR_{EU} +  \MR_{UD}  - 1 }.
$$
Since the discussion in Section \ref{sec::cornfield-alternative} still holds, the proofs for the following propositions are the same as those in Appendices A.2 and A.4.
First, we have the following bounding factor for nonnegative outcomes:
\begin{proposition}
\label{prop::nonnegative}
$$
\CMR_{ED+} = \frac{\MR_{ED}}{\SMR_{ED+}} \leq   \BF_U, \quad 
\CMR_{ED-} = \frac{\MR_{ED}}{\SMR_{ED-}} \leq \BF_U,\quad 
\CMR_{ED} = \frac{\MR_{ED}}{\SMR_{ED}} \leq  \BF_U.
$$
\end{proposition}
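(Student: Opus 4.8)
The plan is to exploit the remark preceding the statement: the proof of Proposition \ref{prop::bound-CRR} nowhere used that $r(\cdot)$ and $r^*(\cdot)$ are probabilities bounded above by $1$; it used only their nonnegativity together with Lemmas \ref{lemma::basic-factorial-derivative} and \ref{lemma::cornfield-increasing}. Since for a general nonnegative outcome $D$ the functions $r^*(u) = \E(D\mid E=1, U=u)$ and $r(u) = \E(D\mid E=0, U=u)$ are still nonnegative, the entire argument transfers. First I would simplify the confounding mean ratios exactly as in the binary case. Cancelling the common factor $\int r^*(u) F_1(du)$ in $\CMR_{ED+} = \MR_{ED}/\SMR_{ED+}$ gives $\CMR_{ED+} = \int r(u) F_1(du) \big/ \int r(u) F_0(du)$, and cancelling the common factor $\int r(u) F_0(du)$ in $\CMR_{ED-} = \MR_{ED}/\SMR_{ED-}$ gives $\CMR_{ED-} = \int r^*(u) F_1(du) \big/ \int r^*(u) F_0(du)$. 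These are precisely the quantities bounded in Proposition \ref{prop::bound-CRR}, with $\MR_{UD}$ now in the role played there by $\GRR_{UD}$.

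Next I would bound $\CMR_{ED+}$ by rewriting it as a ratio over the two-point distribution supported on $\min_u r(u)$ and $\max_u r(u)$, introducing the same weights $w_1, w_0$ obtained by integrating the nonnegative function $r(u) - \min_u r(u)$ against $F_1$ and $F_0$, and setting $\Gamma = w_1/w_0$. The Radon--Nikodym bound $\Gamma = \int \{r(u) - \min_u r\}\,\GRR_{EU}(u)\,F_0(du) \big/ \int \{r(u) - \min_u r\}\,F_0(du) \le \GRR_{EU}$ survives intact because the integrand $r(u) - \min_u r$ is nonnegative. Lemma \ref{lemma::basic-factorial-derivative} then shows the resulting expression is monotone in $w_1$ and hence attains its extreme at a boundary value, and Lemma \ref{lemma::cornfield-increasing} upgrades the bound from $\Gamma$ to $\GRR_{EU}$, yielding $\CMR_{ED+} \le \GRR_{EU}\,\MR_{UD|E=0} / (\GRR_{EU} + \MR_{UD|E=0} - 1) \le \BF_U$. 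Running the identical computation with $r^*$ replacing $r$ gives $\CMR_{ED-} \le \GRR_{EU}\,\MR_{UD|E=1}/(\GRR_{EU} + \MR_{UD|E=1} - 1) \le \BF_U$, where the last step again uses that $\MR_{UD} = \max(\MR_{UD|E=0}, \MR_{UD|E=1})$ and that the bounding factor is increasing by Lemma \ref{lemma::cornfield-increasing}.

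Finally, to reach the bound on the whole-population ratio I would prove the mean-ratio analog of Proposition \ref{prop::average}: the total-probability decomposition $\int r^* F = f \int r^* F_1 + (1-f) \int r^* F_0$, and the same for $r$, shows that $\SMR_{ED}$ is a convex combination of $\SMR_{ED+}$ and $\SMR_{ED-}$, so that $1/\CMR_{ED} = w/\CMR_{ED+} + (1-w)/\CMR_{ED-}$ for the same weight $w\in[0,1]$. Combining this harmonic-average identity with the two boundary bounds gives $1/\CMR_{ED} \ge w/\BF_U + (1-w)/\BF_U = 1/\BF_U$, i.e.\ $\CMR_{ED} \le \BF_U$. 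There is no genuine obstacle: the content is entirely inherited from Propositions \ref{prop::average} and \ref{prop::bound-CRR}. The only point demanding care is the \emph{bookkeeping}, namely verifying line by line that no step invoked the ceiling $r, r^* \le 1$; and indeed it never does, since every inequality rests solely on nonnegativity of $r$ and $r^*$.
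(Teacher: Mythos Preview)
Your proposal is correct and follows exactly the approach the paper indicates: the paper's own proof simply observes that the argument for Proposition~\ref{prop::bound-CRR} (together with the harmonic-average identity of Proposition~\ref{prop::average}) goes through verbatim because it uses only the nonnegativity of $r(\cdot)$ and $r^*(\cdot)$, never the bound $r,r^*\le 1$. You have carried out precisely this verification in detail, including the cancellation giving $\CMR_{ED\pm}$ the same form as $\CRR_{ED\pm}$ and the Radon--Nikodym/monotonicity steps, so there is nothing to add.
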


In practice, we might also be interested in the average causal effect of the exposure on the outcome on the difference scale. The observed mean difference 
of the exposure on the outcome is
$$
   \E(D\mid E=1) - \E(D\mid E=0)  \equiv m_1 - m_0 .
$$
The average causal effect of the exposure on the outcome for exposed is
$$
\ACE_{ED+} =     \int \E(D\mid E=1, U=u) F_1(du)    - \int \E(D\mid E=0, U=u)  F_1(du)    = m_1 -  \int   r(u) F_1(du),
$$
the average causal effect of the exposure on the outcome for unexposed is
$$
\ACE_{ED+} =     \int \E(D\mid E=1, U=u) F_0(du)    - \int \E(D\mid E=0, U=u)  F_0(du)  =   \int  r^*(u)   F_0(du) - m_0 ,
$$
and the average causal effect of the exposure on the outcome for the whole population is
\begin{eqnarray*}
\ACE_{ED} &=&          \int \E(D\mid E=1, U=u) F(du)    - \int \E(D\mid E=0, U=u)  F(du)\\  
&=&f \ACE_{ED+}  + (1-f) \ACE_{ED-}.
\end{eqnarray*}

Similar to the discussion in Section \ref{sec::rd-rr} for the risk difference with sensitivity parameters expressed on the risk ratio scale, we have the following proposition about the average causal effect.

\begin{proposition}
\label{prop::ace-rr}
For nonnegative outcomes, the lower bounds for the average causal effects are
\begin{eqnarray*}
&&\ACE_{ED+}  \geq   m_1 - m_0\times \BF_U,\\ 
&&\ACE_{ED-} \geq  m_1/\BF_U - m_0, \\
&&\ACE_{ED} \geq   (     m_1    - m_0\times \BF_U )  \times  \left\{    f  + (1-f)/\BF_U  \right\}
=(    m_1 / \BF_U   - m_0)  \times  \left\{    f\times \BF_U + (1-f)  \right\} .
\end{eqnarray*}
\end{proposition}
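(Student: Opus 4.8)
The plan is to reduce the statement to Proposition \ref{prop::nonnegative} in exactly the way Proposition \ref{prop::RD-biasfactor} was reduced to Proposition \ref{prop::bound-CRR} for the risk difference. The starting observation is that, of the four relevant integrals, two are identifiable from the observed data, namely $m_1 = \int r^*(u)F_1(du)$ and $m_0 = \int r(u)F_0(du)$, whereas the two cross-world quantities $\E\{D(0)\mid E=1\} = \int r(u)F_1(du)$ and $\E\{D(1)\mid E=0\} = \int r^*(u)F_0(du)$ are not. Each average causal effect is a difference of one identifiable and one unidentifiable mean, so the entire problem amounts to obtaining a one-sided bound on each cross-world mean.

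First I would rewrite the confounding mean ratios of Proposition \ref{prop::nonnegative} so that the cross-world means appear explicitly. A direct cancellation gives $\CMR_{ED+} = \E\{D(0)\mid E=1\}/m_0$ and $\CMR_{ED-} = m_1/\E\{D(1)\mid E=0\}$. Feeding these into the bounds $\CMR_{ED+}\leq \BF_U$ and $\CMR_{ED-}\leq \BF_U$ supplied by Proposition \ref{prop::nonnegative} then yields the two one-sided controls $\E\{D(0)\mid E=1\}\leq m_0\times \BF_U$ and $\E\{D(1)\mid E=0\}\geq m_1/\BF_U$. Substituting these into the definitions $\ACE_{ED+} = m_1 - \E\{D(0)\mid E=1\}$ and $\ACE_{ED-} = \E\{D(1)\mid E=0\} - m_0$ gives the first two displayed inequalities immediately.

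For the whole-population bound I would invoke the decomposition $\ACE_{ED} = f\ACE_{ED+} + (1-f)\ACE_{ED-}$ recorded just above the proposition, take the convex combination of the two lower bounds already obtained, and then verify the elementary identity $f(m_1 - m_0\BF_U) + (1-f)(m_1/\BF_U - m_0) = (m_1 - m_0\BF_U)\{f + (1-f)/\BF_U\} = (m_1/\BF_U - m_0)\{f\times\BF_U + (1-f)\}$, which delivers both factored forms. The argument introduces no new ideas beyond Proposition \ref{prop::nonnegative}, since replacing the probabilities $r(u),r^*(u)$ by conditional expectations leaves every step of the relative-risk derivation intact; the only point demanding care is the bookkeeping on the direction of the two inequalities — one cross-world mean is bounded from above and the other from below — together with confirming that the two product forms for $\ACE_{ED}$ indeed coincide.
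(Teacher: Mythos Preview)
Your proposal is correct and mirrors the paper's approach exactly: the paper states that the proof of this proposition is the same as that of Proposition~\ref{prop::RD-biasfactor} with $(p_1,p_0)$ replaced by $(m_1,m_0)$, which is precisely the reduction you carry out via Proposition~\ref{prop::nonnegative}. The bookkeeping on inequality directions and the algebraic identity you note are both handled correctly.
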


We can also obtain similar forms of the conclusion for apparently preventive exposure, for average causal effects averaged over observed covariates, and for corresponding Cornfield conditions. The only difference is that $(p_1, p_0)$ is replaced by $(m_1, m_0)$.

\bibliographystyle{unsrt}
\bibliography{cornfield}

\end{document}